\numberwithin{equation}{section}
\newtheorem{thm}{Theorem}[section]
\newtheorem{lem}[thm]{Lemma}
\newtheorem*{thmA-1}{Theorem A-1}
\newtheorem*{thmA-2}{Theorem A-2}
\newtheorem*{thmB}{Theorem B}
\newtheorem{prop}[thm]{Proposition}
\newtheorem{rem}[thm]{Remark}
\newtheorem{assume}[thm]{Assumption}
\newtheorem{define}{Definition}[section]
\newcommand{\hrefemail}[1]{\href{mailto:#1}{#1}}
\title[Singular and regular analysis for two-phase free boundary with gravity]{Singular and regular analysis for the free boundaries of two-phase inviscid fluids in gravity field}
\subjclass[2010]{Primary 35Q35, 76B15; Secondary 35R35, 35B44}
\keywords{Free boundary; Two-phase fluid; Stagnation point; Singularity; Regularity.}
\author[Du]{$^{\dagger,\ddagger,1}$Lili Du}
\email{$^{1}$\hrefemail{ dulili@scu.edu.cn}}
\address{$^{\dagger}$School of Mathematical Sciences, Shenzhen University, 
Shenzhen, 518000, P.~R.~China}
\author[Ji]{$^{\ddagger,2}$Feng Ji}
\email{$^{2}$\hrefemail{jifeng\_math@126.com}}
\address{$^{\ddagger}$, Department of Mathematics, Sichuan University, Chengdu, 610000, P.~R.~China}
\begin{document}
\begin{abstract}
    In this paper, we consider a free boundary problem of two-phase inviscid incompressible fluid in gravity field. The presence of the gravity field induces novel phenomena that there might be some stagnation points on free surface of the two-phase flow, where the velocity field of the fluid vanishes. From the mathematical point of view, the gradient of the stream function degenerates near the stagnation point, leading to singular behaviors on the free surface. The primary objective of this study is to investigate the singularity and regularity of the two-phase free surface, considering their mutual interaction between the two incompressible fluids in two dimensions. More precisely, if the two fluids meet locally at a single point, referred to as the possible two-phase stagnation point, we demonstrate that the singular side of the two-phase free surface exhibits a symmetric Stokes singular profile, while the regular side near this point maintains the $C^{1,\alpha}$ regularity. On the other hand, if the free surfaces of the two fluids stick together and have non-trivial overlapping common boundary at the stagnation point, then the interaction between the two fluids will break the symmetry of the Stokes corner profile, which is attached to the $C^{1,\alpha}$ regular free surface on the other side. As a byproduct of our analysis, it's shown that the velocity field for the two fluids cannot vanish simultaneously on the two-phase free boundary.
    
    Our results generalize the significant works on the Stokes conjecture in [V$\check{a}$rv$\check{a}$ruc$\check{a}$-Weiss, Acta Math., 206, (2011)] for one-phase gravity water wave, and on regular results on the free boundaries in [De Philippis-Spolaor-Velichkov, Invent. Math., 225, (2021)] for two-phase fluids without gravity.
\end{abstract}
\maketitle
\tableofcontents
\section{Introduction and main results}

\subsection{Two-phase Bernoulli-type free boundary problem}

In this paper we will investigate the free surface (in particular singularities) of two-phase incompressible fluids with gravity from the perspective of two-phase Bernoulli type free boundary problems.

Most of the flows encountered in nature are multi-fluid flows, which encompasses flows of non-miscible fluids like water, oil and air. There can be small amplitude waves propagating at the interface between the two fluids. Each fluid obeys its own model and the coupling occurs through the two-phase free interface. In hydrodynamics, the two-phase free boundary problem describes both water waves and the equally physical problem of the equilibrium state of a fluid when pumping in water from one lateral boundary and sucking it out at the other lateral boundary. The theory of displacement of one fluid by another is of great interest and has received much attention in the literature and experiment, for example, phase transition at the sharp interface between two flows as in \cite{D94} for two incompressible flows, the multi-component multi-phase fluid flow in two-dimensional anisotropic heterogeneous porous media as in \cite{MM19}, and the reduced gravity model simulating the stratification in the ocean as a two-layer fluid in \cite{YZZ24}, and so on. It is nature that the branch point may appear in drop configuration in view of the existence of surface capillary forces, and a cavity may appear inside the fluid, leading to some new behavior of free boundaries.

\subsection{Formulation of the two-phase problem}

We consider the geometric profile of the interface between two incompressible inviscid irrotational fluids, namely fluid 1 and fluid 2, of different densities under the gravity field in dimension $2$. The two-phase free boundary problem acted on by gravity writes
\begin{equation} \label{eq1.1}
\begin{cases}
\Delta u = 0 \qquad\qquad\qquad\ \ \;\, \text{in} \quad \{u\neq0\}, \\
|\nabla u^+|^2 - |\nabla u^-|^2 = \Lambda \quad \text{on} \quad \partial\{u>0\}\cap\partial\{u<0\}, \\
|\nabla u^+|^2 = -x_2 \qquad\qquad\, \text{on} \quad \partial\{u>0\}\backslash\partial\{u<0\}, \\
|\nabla u^-|^2 = -x_2-\Lambda \qquad\; \text{on} \quad \partial\{u<0\}\backslash\partial\{u>0\}
\end{cases}
\end{equation}
with the constant $\Lambda$. Here $u$ is the so-called \emph{Stokes stream function} and $u^+:=\max\{u,0\}$, $u^-:=\max\{-u,0\}$. We would like to mention that the term $-x_2$ in the free boundary conditions arises from the gravity force, which leads to the presence of stagnation point. This phenomenon is commonly encountered in hydrodynamics, and we will provide a detailed physical explanation of the issue in Section 1.3. Without loss of generality, we assume $\Lambda\leq0$, namely, $\Lambda=-\lambda^2$ for $\lambda\geq0$. In this setting, $\nabla u^\pm$ may degenerate at certain points on the free boundaries $\Gamma:=\partial\{u>0\}\cup\partial\{u<0\}$, leading to singularities. Furthermore, the nodal set $\{u=0\}$ may have positive Lebesgue measure, and we will specify some definitions as in \cite{PSV21} to help the classification for free boundary points. (Please see Figure \ref{1-0}.)

\begin{figure}[!h]
	\includegraphics[width=75mm]{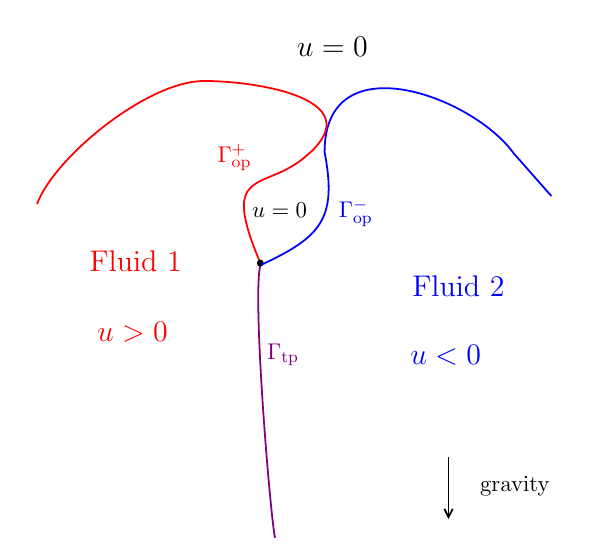}
	\caption{Two-phase free boundary problem}
	\label{1-0}
\end{figure}

\begin{define} \label{deffb}
	We classify the free boundary points for $u$ of the two-phase free boundary problem (\ref{eq1.1}) into two types as follows.
	
	(1) The one-phase free boundary points
	\begin{equation} \label{op+}
	\Gamma_{\rm op}^+:= \partial\{u>0\}\backslash\partial\{u<0\},
	\end{equation}
	\qquad\quad and
	\begin{equation} \label{op-}
	\Gamma_{\rm op}^-:= \partial\{u<0\}\backslash\partial\{u>0\}.
	\end{equation}
	
	(2) The two-phase free boundary points
	\begin{equation} \label{tp}
	\Gamma_{\rm tp}:= \partial\{u>0\}\cap\partial\{u<0\}.
	\end{equation}
\end{define}

\begin{rem}
	Definition \ref{deffb} implies that if $x^0\in\Gamma_{\rm op}^+$ (resp. $x^0\in\Gamma_{\rm op}^-$), then $u(x^0)=0$ and there is $r_0>0$ depending on $x^0$ such that for any $r<r_0$, $u\geq0$ (resp. $u\leq0$) in $B_r(x^0)$. If $x^0\in\Gamma_{\rm tp}$, then $u$ has to change its sign in $B_r(x^0)$ for any $r>0$.
\end{rem}

In particular, the connection of the one-phase free boundaries $\Gamma_{\rm op}^\pm$ and two-phase free boundary $\Gamma_{\rm tp}$ is always interesting point, called \emph{branch point}, and we give the definition as follows. (See Figure \ref{1-0} for example.)

\begin{define} \label{defbp}
	We say a point $x^0\in\Gamma_{\rm tp}$ is a branch point of $u$, provided that the Lebesgue measure $|\{u=0\}\cap B_r(x^0)|>0$ for any $r>0$. And we denote $\Gamma_{\rm bp}$ as the set of branch points.
\end{define}

Our motivation is reminiscent of the previous studies on stagnation points (where $|\nabla u|=0$) for one-phase gravity water waves, which is referred to as \emph{Stokes conjecture} about the shape profile of the wave at the crest of the interface between the air and water. In 1880, G. G. Stokes studied the free surface of an incompressible inviscid fluid under the influence of gravity in two dimensions, traveling in permanent form with a constant speed, and the formation of a $120^\circ$ corner at the crests of the interface has been conjectured. In the significant work \cite{VW11}, V$\check{a}$rv$\check{a}$ruc$\check{a}$ and Weiss skillfully tackled the intricate problem of singularity analysis through the utilization of a monotonicity formula and frequency formula, and unraveled the shape of free boundary at stagnation point. They concluded that the asymptotics at any stagnation point in one-phase water wave without vorticity is given by the "Stokes corner flow" where the surface has a corner of $120^\circ$, which gave a perfect and rigorous answer to the Stokes conjecture. However, the two-phase flow possesses some new and interesting phenomena that $|\nabla u^\pm|$ may not vanish simultaneously, and each phase may interact with the other side, which require us to consider both the one-sided singularity analysis and the overall interaction structure.

\subsection{Physical background}
A class of two-phase incompressible fluids concerns the motion of interface separating two inviscid, incompressible, irrotational fluids from a region of zero density (for example, the air) under the influence of gravity in 2-dimensional domain $D$. It is assumed that the velocity field of the fluids are $U^\pm$, the densities of the fluids are $\rho^\pm$, the pressure of the fluids are $P^\pm$, the two fluids occupy the regions $\Omega^\pm$ respectively, and the gravity field is $-g \bm e_2$ with $\bm e_2=(0,1)$ and $g$ the gravitational acceleration. When surface tension is zero, the governing equations of the inviscid irrotational fluids are described by
\begin{equation*}
\begin{cases}
\nabla\cdot(\rho^\pm U^\pm)=0 \qquad\qquad\qquad\quad\quad\ \ \text{in} \quad \Omega^\pm, \\
\rho^\pm(U^\pm\cdot\nabla)U^\pm + \nabla P^\pm + g \bm e_2 = 0 \quad \text{in} \quad \Omega^\pm, \\
\nabla\times U^\pm = 0 \qquad\qquad\qquad\qquad\qquad \text{in} \quad \Omega^\pm,
\end{cases}
\end{equation*}
where $U^+=(U_1^+(x,y),U_2^+(x,y))$ in fluid 1 and $U^-=(U_1^-(x,y),U_2^-(x,y))$ in fluid 2 for $(x,y)\in\Omega^\pm$. The fluids and the air are separated by the unknown interfaces $\partial\Omega^\pm$, named the free boundaries. See Figure \ref{1-1}.

\begin{figure}[!h]
	\includegraphics[width=75mm]{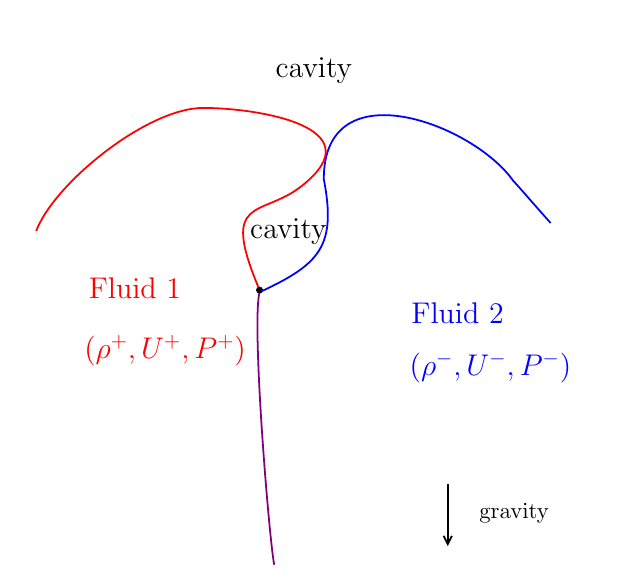}
	\caption{Two-phase fluid model with free boundaries}
	\label{1-1}
\end{figure}

It is straightforward from the incompressbility condition that there are two stream functions $\psi^\pm(x,y)$ in the fluid field such that
$$U_1^\pm=\frac{\partial_y \psi^\pm}{\sqrt{\rho^\pm}} \quad \text{and} \quad U_2^\pm=-\frac{\partial_x \psi^\pm}{\sqrt{\rho^\pm}},$$
where $\partial_x$ and $\partial_y$ denotes the partial derivatives with respect to $x$ and $y$, and satisfying
$$\Delta \psi^\pm =0 \quad \text{in} \quad \Omega^\pm$$
respectively in fluid 1 and fluid 2. Meanwhile, the kinematic boundary condition that the same particles always form the free surface is equivalent to the fact that $\psi$ is a constant on the free boundary. Without loss of generality, we impose $\psi=0$ on the free surface, and we can define
\begin{equation*}
\psi(x,y)=
\begin{cases}
\psi^+ \quad \text{in fluid field 1, $\Omega^+$,} \\
\psi^- \quad \text{in fluid field 2, $\Omega^-$,} \\
0, \quad\ \text{otherwise.}
\end{cases}
\end{equation*}
Hence $\{\psi>0\}$ and $\{\psi<0\}$ are the two fluid fields respectively, $\{\psi=0\}$ is the air region, and $\partial\{\psi>0\}\cup\partial\{\psi<0\}$ denotes the free surface. Furthermore, we can define the two-phase free boundary $\Gamma_{\rm tp}$ to be the interface between the two fluids, and the one-phase free boundary $\Gamma_{\rm op}^+$ (or $\Gamma_{\rm op}^-$) to be the interface between the cavity and fluid 1 (or fluid 2). See Figure \ref{1-0} in Section 1.1 for the classification of free boundaries.

On account of Bernoulli's law, we obtain that
$$\frac{P^\pm}{\rho^\pm} + \frac12|U^\pm|^2 +\frac{gy}{\rho^\pm} \text{ are constants.}$$
We assume the constants to be $\mathcal{B}^\pm$ respectively in $\Omega^\pm$. Plugging the expression of $U^\pm$ into it, we have
\begin{equation} \label{ber}
\frac{|\nabla \psi^+|^2}{2} + P^+ + gy =\rho^+\mathcal{B}^+ \quad \text{and} \quad \frac{|\nabla \psi^-|^2}{2} + P^- + gy =\rho^-\mathcal{B}^-
\end{equation}
in $\Omega^+$ and $\Omega^-$ respectively. Moreover, the pressure is assumed to be continuous across the two-phase free boundary $\Gamma_{\rm tp}$ (fluid-fluid interface), and they are equal to a constant pressure denoted as $P_0$ on the one-phase free boundaries $\Gamma_{\rm op}^\pm$ (fluid-air interface). Hence, we have
$$P^+=P^- \quad \text{on} \quad \Gamma_{\rm tp} \quad \text{and} \quad  P^\pm=P_0\quad \text{on} \quad \Gamma_{\rm op}^\pm.$$
This together with (\ref{ber}) implies that
$$|\nabla \psi^\pm|^2 = 2(\rho^\pm\mathcal{B}^\pm - P_0 - gy) \quad \text{on} \quad \Gamma_{\rm op}^\pm$$
and
$$|\nabla \psi^+|^2 - |\nabla \psi^-|^2 = 2(\rho^+\mathcal{B}^+ - \rho^-\mathcal{B}^-) \quad \text{on} \quad \Gamma_{\rm tp}.$$
Define the parameters $\lambda^+$ and $\lambda^-$ as
$$\lambda^+=2(\rho^+\mathcal{B}^+ - P_0) \quad \text{and} \quad \lambda^-=2(\rho^-\mathcal{B}^- - P_0),$$
and we have
$$\Lambda:=\lambda^+ - \lambda^- = 2(\rho^+\mathcal{B}^+ - \rho^-\mathcal{B}^-).$$
In fact, $\frac12\lambda^\pm$ represents the kinetic energy of the fluids per unit volumn on their one-phase free boundaries, and $\frac12(\lambda^+-\lambda^-)$ means the jump of the kinetic energy per unit volumn across the two-phase free boundary. Consequently, we obtain the following transition conditions on the free boundaries,
\begin{equation*}
\begin{cases}
|\nabla \psi^+|^2 = \lambda^+ - 2gy \qquad\qquad\qquad\quad \text{on} \quad \Gamma_{\rm op}^+, \\
|\nabla \psi^-|^2 = \lambda^- - 2gy \qquad\qquad\qquad\quad \text{on} \quad \Gamma_{\rm op}^-, \\
|\nabla \psi^+|^2 - |\nabla \psi^-|^2 = \lambda^+ - \lambda^-=\Lambda \quad \text{on} \quad \Gamma_{\rm tp}.
\end{cases}
\end{equation*}
It is natural to assume throughout the rest of the paper that $$\psi^+\equiv0 \text{ in } \{y\geq\lambda^+/(2g)\} \quad\text{and}\quad \psi^-\equiv0 \text{ in } \{y\geq\lambda^-/(2g)\}.$$
Suppose without loss of generality that $\Lambda\leq0$ and set $\lambda^2:=-\Lambda=\lambda^- - \lambda^+\geq0$. For the sake of simplicity, transform the coordinate $(x_1,x_2)=(2gx,2gy-\lambda^+)$ and denote $u(x_1,x_2)=2g\psi(x,y)=2g\psi\left( \frac{x_1}{2g},\frac{x_2+\lambda^+}{2g} \right)$, and we have
\begin{equation} \label{eq1.2}
\begin{cases}
|\nabla u^+|^2 = -x_2 \qquad\quad\quad\quad\; \text{on} \quad \Gamma_{\rm op}^+, \\
|\nabla u^-|^2 = -x_2 + \lambda^2 \quad\quad\ \ \ \text{on} \quad \Gamma_{\rm op}^-, \\
|\nabla u^+|^2 - |\nabla u^-|^2 = -\lambda^2 \quad \text{on} \quad \Gamma_{\rm tp}.
\end{cases}
\end{equation}
Thus we obtain the equation (\ref{eq1.1}) with $\Lambda=-\lambda^2\leq0$. See Figure \ref{impinge} for an intuitive physical illustration, which describes two flows impinging in a nozzle and falling back by gravity.

\begin{figure}[!h]
	\includegraphics[width=90mm]{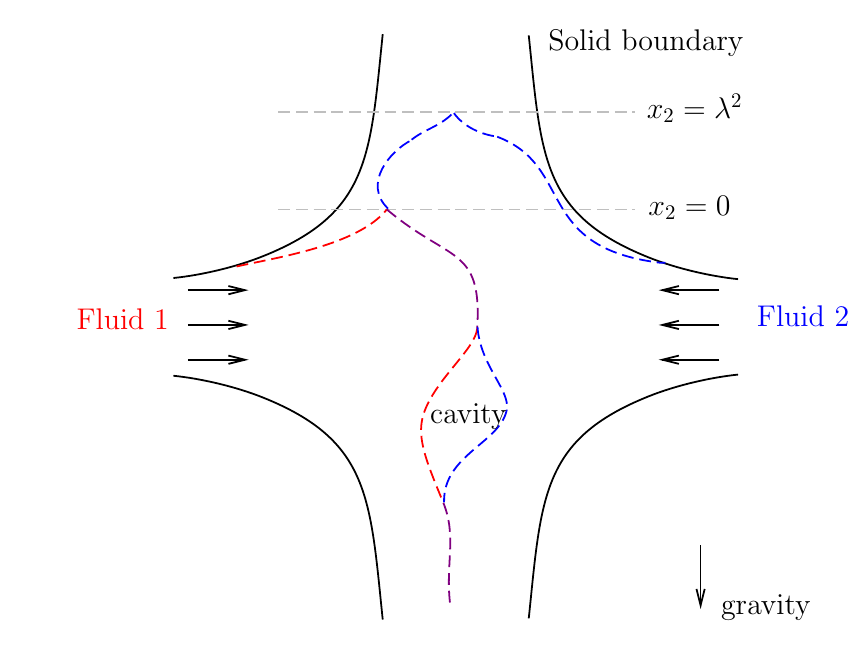}
	\caption{The impinging jet flows with gravity through a nozzle.}
	\label{impinge}
\end{figure}

\subsection{Variational approach}

The solutions to water wave problems are relative to the critical points of some energy functional, and we could discuss the variation of the energy. Alt, Caffarelli and Friedman started the pioneer research on local minimizers (where the domain variation of the energy must vanish) in a domain $D\subset\mathbb{R}^n$ for the so-called \emph{ACF-functional},
\begin{equation} \label{Jacf}
\mathcal{J}_{acf}(u;D)=\int_D \left( |\nabla u|^2 + \lambda_1^2(x) \chi_{\{u>0\}} + \lambda_2^2(x) \chi_{\{u<0\}} +\lambda_0^2(x) \chi_{\{u=0\}} \right) dx
\end{equation}
with some given non-negative functions $\lambda_i(x)\in C^{0,\alpha}(D), i=0,1,2$ for
$$u\in\mathcal{K}:=\left\{ u\in W^{1,2}(D) | \ u=g \ \text{on}\ \partial D  \right\},$$
where $g\in W^{1,2}(\partial D)$ is the fixed boundary function and $\chi_S$ denotes the characteristic function of the set $S$, $dx=dx_1 dx_2$. The regularity of local minimizers and free boundaries with two-phases was first addressed in their epoch-marking works \cite{ACF84}\cite{C87}\cite{C88} and \cite{C89}, and the series of works also established the significant criteria "Flatness implies $C^{1,\alpha}$" for free boundaries. One of the key points in the two-phase problem is the nontrivial nodal set $\{u=0\}$, which affects the behavior of the free boundaries. This cavity introduces a new element in the analysis of the free boundary, switching from one-phase to two-phase at the so-called branch points, at which the zero level set looks like a cusp. Alt, Caffarelli and Friedman avoided the discussion of cavity and branch point by setting the constant parameters $\lambda_i(x):=\lambda_i (i=0,1,2)$ and $\lambda_0=\min\{\lambda_1,\lambda_2\}$ (c.f. \cite{ACF84}, Chapter 6) and focused on the two-phase free boundary points to derive the $C^{1,\alpha}$ regularity of the free boundary when the gradient of $u^\pm$ do not vanish. Moreover, the rule of "flatness implies $C^{1,\alpha}$" is also obtained in \cite{SFS14} for two-phase viscosity solutions (which are the closest sense of solutions to minimizers). Roughly speaking, "flatness implies $C^{1,\alpha}$" means that if the free boundary is in the neighborhood of a certain point sufficiently close to a plane, then it has to be a $C^{1,\alpha}$ surface in a certain smaller neighborhood of that point. Recently, De Philippis, Spolaor and Velichkov carried on a more interesting investigation about the regularity around the branch point when the constant parameters $0\leq\lambda_0<\min(\lambda_1,\lambda_2)$ and $|\{u=0\}|\neq0$ in the celebrated work \cite{PSV21}, which filled the gap in the two-phase research with cavity, but required that both $|\nabla u^\pm|$ should be positive. This regularity result on the free boundary was generalized to the 3-dimensional axisymmetric two-phase flow in \cite{DJ24}. Nevertheless, the singularity of the free boundary point where at least one of $|\nabla u^\pm|$ vanishes remains unknown, which forms a part of our motivation of this paper.

A perplexing issue, untouched in the literature, concerns the singular free boundary points in two-phase problem. A stagnation point is one at which the velocity field is zero, and such free boundary point is always not "flat". The brilliant works \cite{VW11} and \cite{VW12} are well established paradigms for Stokes conjecture, initially come up in \cite{S80}, Appendix B. Inspired by the theory for one-phase gravity water wave, we investigate the stagnation points in two-phase incompressible inviscid fluids in gravity field. In particular, our results also imply that in general, the velocity field may not vanish simultaneously at a point for the two-phase interface. More precisely, the result of "Stokes corner flow" is expected to hold for the fluid with vanishing velocity at the free boundary point, while the other phase may have smooth free surface near this point, and there might be interaction between the two free surfaces. We also discuss a special case that both fluids have vanishing velocity field simultaneously at the stagnation point. Acted on by gravity, the two fluids are naturally restricted in a half-plane, which cannot be filled with two "Stokes corner flow" without overlapping. This result implies that if both velocities of the two fluids vanish at some free boundary point $x^0$, then the point $x^0$ must be a one-phase free boundary point.

We focus on the two-phase stagnation point, namely, the possible stagnation point on two-phase free boundary $\Gamma_{\rm tp}$. The main objective of the present paper is to work out the singularity and the regularity of the free surface near the two-phase stagnation point. Considering the effect of the gravity, we may set the functional
$$\mathcal{J}_{tp}(u;D)=\int_D \left( |\nabla u|^2 + (-x_2) \chi_{\{u>0\}} + (-x_2+\lambda^2) \chi_{\{u<0\}} \right) dx,$$
which is coincide with (\ref{Jacf}) when $\lambda_1(x)=-x_2$, $\lambda_2(x)=-x_2+\lambda^2$ and $\lambda_0(x)=0$ with $\lambda\geq0$. The reason for this setting is due to the transition conditions (\ref{eq1.2}) on the free boundaries. The free boundary problem in this paper writes

\begin{equation} \label{equation}
\begin{cases}
\Delta u = 0 \qquad\qquad\qquad\qquad \text{in} \quad D\cap\{u\neq0\}, \\
|\nabla u^+|^2 = -x_2 \qquad\qquad\quad\, \text{on} \quad D\cap\Gamma_{\rm op}^+, \\
|\nabla u^-|^2 = -x_2+\lambda^2 \qquad\quad \text{on} \quad D\cap\Gamma_{\rm op}^-, \\
|\nabla u^+|^2 - |\nabla u^-|^2 = -\lambda^2 \quad \text{on} \quad D\cap\Gamma_{\rm tp}
\end{cases}
\end{equation}
with $\lambda\geq0$. It should be noted that the minimizer $u$ of the energy functional $\mathcal{J}_{acf}$ not only satisfies the transition conditions in (\ref{equation}), but also satisfies the following additional conditions on the free boundaries,
\begin{equation} \label{equation2}
|\nabla u^+|^2\geq -x_2 \quad \text{on} \quad \partial\{u>0\}\cap D, \quad |\nabla u^-|^2\geq -x_2+\lambda^2 \quad \text{on} \quad \partial\{u<0\}\cap D.
\end{equation}
These additional restriction conditions also hold in our case, which will be verified in Appendix A.

\subsection{Classification and analysis on the stagnation point}
As we mentioned before, once the gravity effect is taken into account, physical intuition suggests that the singularity occurs near the possible stagnation points. The main purpose of this paper is to clarify the singular profile of the interfaces at the stagnation point on the free boundary. More precisely, we are interested in the stagnation points on $\Gamma_{\rm tp}$, since the one-phase stagnation points have already been thoroughly studied by V$\check{a}$rv$\check{a}$ruc$\check{a}$ and Weiss in \cite{VW11}. Notice that due to the transition condition $|\nabla u^+|^2 - |\nabla u^-|^2 = -\lambda^2\leq0$, $|\nabla u^\pm|$ may either degenerate simultaneously or not at one point on $\Gamma_{\rm tp}$, which gives a classification of \emph{"partial-degenerate case"} and \emph{"complete-degenerate case"} determined by the value of $\lambda$. In the partial-degenerate case where $\lambda>0$ and $|\nabla u^\pm|$ cannot vanish simultaneously, the stagnation point must be the branch point, defined as in Definition \ref{defbp}. In fact, the $C^{1,\alpha}$ regularity of the "pure" two-phase free boundary $\Gamma_{\rm tp}\backslash\Gamma_{\rm bp}$ when $|\nabla u^-(x^0)|\neq0$ was established in the previous seminal works by Alt, Caffarelli and Friedman in 1980s, and later by De Silva, Ferrari and Salsa through a new approach in \cite{SFS14} in 2014. However, in the complete-degenerate case where $\lambda=0$ and $|\nabla u^+|=|\nabla u^-|=0$ at the stagnation point, the aforementioned results do not apply for the "pure" two-phase free boundary points. Hence, in this paper, we restrict our investigation to the stagnation point $x^0\in\Gamma_{\rm bp}$ when $\lambda>0$, and the stagnation point $x^0\in\Gamma_{\rm tp}$ when $\lambda=0$.

We will discuss the two subcases of partial-degenerate case $\lambda>0$ (Case A), and complete-degenerate case $\lambda=0$ (Case B) in Section 2 and Section 3 respectively, and here we give a brief analysis on each case.

\emph{Case A.} (Partial-degenerate case, $\lambda>0$) Recalling the additional free boundary conditions (\ref{equation2}) that
$$|\nabla u^+|^2\geq-x_2 \quad \text{on} \quad \partial\{u>0\} , \qquad |\nabla u^-|^2\geq-x_2+\lambda^2 \quad \text{on} \quad \partial\{u<0\},$$
it is straightforward to deduce that $|\nabla u^+|$ will vanish in $\{x_2\geq0\}$ at a height lower than $|\nabla u^-|$ when $\lambda>0$, and we concentrate only on the stagnation point $x^0=(x_1^0,0)$ such that $|\nabla u^+(x^0)|=0$. The assumption that $x^0$ lies on $\{x_2=0\}$ arises from the fact that $\Gamma_{\rm op}^+$ vanishes in $\{x_2>0\}$ as $|\nabla u^+|^2=-x_2$ on $\Gamma_{\rm op}^+$. This implies that the free boundary $\partial\{u>0\}=\Gamma_{\rm tp}$ in $\{x_2>0\}$ has no branch point. This situation aligns with the case discussed in \cite{ACF84} and \cite{SFS14} when $\lambda>0$, since the transition condition
$$|\nabla u^+|^2 - |\nabla u^-|^2 = -\lambda^2 \quad \text{on} \quad \Gamma_{\rm tp}$$
as noted in (\ref{equation}) ensures that $|\nabla u^-|$ is non-degenerate.

\emph{Case B.} (Complete-degenerate case, $\lambda=0$) In this case the stagnation point $x^0$ also lies in $\{x_2\geq0\}$ as explained in Case A. Furthermore, we consider the stagnation point $x^0=(x_1^0,0)$, since it is reasonable to assume that $u\equiv0$ in $\{x_2\geq0\}$ in this case, which corresponds to the physical situation that the upper half plane is occupied by the air.

Summarily, it is valid to consider the stagnation point $x^0$ to the positive phase $u^+$ on $\{x_2=0\}$, and the two classifications of $\lambda>0$ and $\lambda=0$ may lead to different asymptotic behaviors, which exhibit new features in two-phase free boundary problem compared to the one-phase water wave. In Case A, we conclude that there is a slight interaction between the degenerate positive phase $u^+$ and the non-degenerate negative phase $u^-$. This interaction can disrupt the symmetry of the possible Stokes corner if the two free boundaries $\partial\{u>0\}$ and $\partial\{u<0\}$ are closely attached. Additionally, it may restrict the deflection angle of the non-degenerate smooth curve within a certain range. In Case B, we observe that the two fluids push against each other when they stick together, preventing them from attaining a common maximal height at one point simultaneously. See Table \ref{table1} for the brief classifications and results.

\begin{table}[h]
	\caption{Classification and Conclusion}\label{table1}
	\centering
	\begin{tabular}{|m{3em}<{\centering}|m{6em}<{\centering}|m{6em}<{\centering}|m{6em}<{\centering}|m{6em}<{\centering}|m{6em}<{\centering}|}
		\hline
		& \multicolumn{2}{c|}{One-phase case: $x^0\in\Gamma_{\rm op}$} & \multicolumn{3}{c|}{Two-phase case: $x^0\in\Gamma_{\rm tp}$} \\
		\hline
		& Type 1 & Type 2 & Type 3 & Type 4 & Type 5 \\
		\hline
		Classifi-cation & Non-degenerate case: \small{$|\nabla u(x^0)|\neq0$} & Degenerate case: \small{$|\nabla u(x^0)|=0$} & Non-degenerate case: \small{$|\nabla u^\pm(x^0)|\neq0$} & Partial-degenerate case: \small{$|\nabla u^+(x^0)|=0$ $|\nabla u^-(x^0)|\neq0$} & Complete-degenerate case: \small{$|\nabla u^\pm(x^0)|=0$} \\
		\hline
		Free boundary near $x^0$ & $C^{1,\alpha}$-graph & Stokes corner & $\partial\{u>0\}\cup$ $\partial\{u<0\}$: respectively $C^{1,\alpha}$-graphs & $\partial\{u>0\}$: Stokes(-type) corner; $\partial\{u<0\}: C^{1,\alpha}$-graph & Impossible; each stagnation point $x^0$ must be a one-phase point \\
		\hline
		Works & Seminal works by Alt-Caffarelli, 1980s. & V$\check{a}$rv$\check{a}$ruc$\check{a}$-Weiss, \emph{Acta Math.}, 2011. & De Philippis-Spolaor-Velichkov, \emph{Invent. Math.}, 2021. & Theorem A in this paper & Theorem B in this paper  \\
		\hline
	\end{tabular}
\end{table}

\subsection{Main results and the organization of this paper}

As explained in Section 1.5, we suppose that the stagnation point of the fluid 1 is on $x_1$-axis, denoted as $x^0=(x_1^0,0)$. We have given the complete classification and the corresponding conclusion both on regular profile and singular profile of the free boundaries in the gravity field in Table \ref{table1}, presenting an overview for the behavior of all free boundary points combined with previous seminal works studying regular free boundary points and one-phase stagnation points. See also Figure \ref{F0} for an illustration.

\begin{figure}[!h]
	\includegraphics[width=175mm]{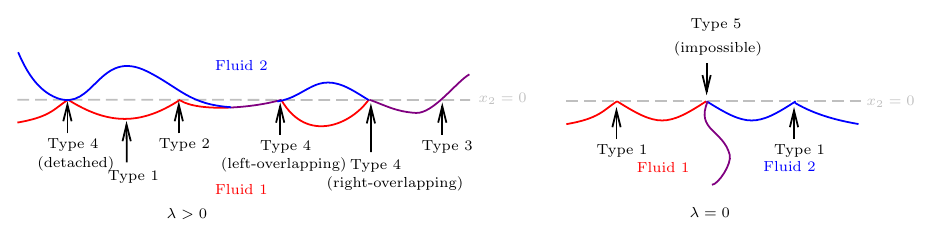}
	\caption{Classification of free boundary points}
	\label{F0}
\end{figure}

\emph{One-phase case.} $x^0\in\Gamma_{\rm op}^+$. In this case $x^0$ can be locally seen as a free boundary point for a one-phase free boundary problem, which is related closely to the Stokes conjecture solved in the celebrated work \cite{VW11}. It is remarkable that the discussion on stagnation points for $u^-$ on $\Gamma_{\rm op}^-\cap\{x_2=\lambda\}$ is quite similar, which is not under consideration here.

\emph{Two-phase case.} We will focus on the local version of the free boundary problem (\ref{eq1.1}), namely, we study the property of the free boundary in a small neighborhood of the possible stagnation point. It is remarkable that the one-phase free boundary condition on $\Gamma_{\rm op}^+$ implies that $\Gamma_{\rm op}^+$ must be contained in the lower half-plane $\{x_2\leq0\}$, and thus we suppose that the fluid of $u^+$ attains its maximal height locally at the stagnation $x^0=(x_1^0,0)$. Namely, we assume that in a small neighborhood $B_R(x^0)$ of $x^0$, $\{u(x)>0\}\cap B_R(x^0)\subset\{x_2<0\}$, which means that the fluid 1 near the stagnation point $x^0$ lies below the height $x_2=0$. Hence we suppose locally that in a small neighborhood of $x^0$,
$$u\leq0 \quad \text{in} \quad B_R(x^0)\cap\{x_2\geq0\} \quad \text{if} \quad \lambda>0,$$
and
$$u\equiv0 \quad \text{in} \quad B_R(x^0)\cap\{x_2\geq0\} \quad \text{if} \quad \lambda=0$$
for some $R>0$. Here we will sketch the classification of $x^0$ here before the statement of our main results.

\emph{Case A. (Partial-degenerate case, $\lambda>0$)} $x^0\in\Gamma_{\rm tp}$ and due to the transition condition in (\ref{equation}), only one of the two fluids has degenerate velocity at $x^0$, namely,
$$ |\nabla u^+(x^0)|^2=0 \quad \text{and} \quad |\nabla u^-(x^0)|^2=\lambda>0 \quad \text{with} \quad \lambda>0.$$
Here $u^-$ is non-degenerate near $x^0=(x_1^0,0)$. It is remarkable that the "pure" two-phase free boundary $\Gamma_{\rm tp}\backslash\Gamma_{\rm bp}$ is locally equal to $\partial\{u<0\}$ (and is also equal to $\partial\{u>0\}$) due to the fact that $\Gamma_{\rm op}^+=\varnothing$ in $\{x_2>0\}$, whose regularity has been obtained by \cite{ACF84} and \cite{SFS14} when $|\nabla u^-|$ is strictly positive. Hence we assume $x^0\in\Gamma_{\rm bp}$, and it can be divided into two further subcases.

\emph{Subcase A-1. (Detached case)} In this case, the free boundaries of $u^+$ and $u^-$ do not intersect in any small neighborhood $B_r(x^0)$ except at $x^0$, namely,
$$\left(\partial\{u>0\}\cap\partial\{u<0\}\cap B_r(x^0)\right)\backslash\{x^0\}=\varnothing$$
for any small enough $r>0$. Notice that we have
\begin{equation*}
\begin{cases}
|\nabla u^+(x)|^2 = -x_2 \qquad\quad\, \text{on} \quad \left( \partial\{u>0\}\cap B_r(x^0) \right) \backslash \{x^0\}, \\
|\nabla u^-(x)|^2 = -x_2+\lambda^2 \quad \text{on} \quad \left( \partial\{u<0\}\cap B_r(x^0) \right) \backslash \{x^0\}.
\end{cases}
\end{equation*}

\emph{Subcase A-2. (Overlapping case)} In this subcase, $u^+$ and $u^-$ share a common free boundary near $x^0$, namely,
$$\left(\partial\{u>0\}\cap\partial\{u<0\}\cap B_r(x^0)\right)\backslash\{x^0\}\neq\varnothing$$
for small enough $r$. It should be noted that in one-phase case, the free boundary condition $|\nabla u|^2=-x_2$ on $\partial\{u>0\}$ helps us to solve an ODE for the scaled solution of $u$, while in the two-phase overlapping situation here, the transition condition
$$|\nabla u^+(x)|^2 - |\nabla u^-(x)|^2 = -\lambda^2 \quad \text{on} \quad \partial\{u>0\}\cap\partial\{u<0\}\cap B_r(x^0)$$
brings some technical difficulties in our analysis since each phase scales differently when taking on a blow-up process.

\emph{Case B. (Complete-degenerate case, $\lambda=0$)} $x^0\in\Gamma_{\rm tp}$ and both velocities of the two fluids vanish at $x^0$, namely,
$$ |\nabla u^+(x^0)|=|\nabla u^-(x^0)|=\lambda=0.$$
We will prove in Section 3 that this case cannot happen for the two-phase free boundary point $x^0\in\Gamma_{\rm tp}$. In other words, in the case $\lambda=0$ the two-phase free boundary $\Gamma_{\rm tp}$ is showed regular, and the stagnation point can only be on the one-phase free boundaries $\Gamma_{\rm op}^\pm$.

\begin{rem}
	We would like to emphasize that without gravity effects there would be no singularity on the free boundary, since the gradient of $u^\pm$ will not vanish, and the free boundary is proved to be $C^{1,\alpha}$ in \cite{PSV21}. The regular profile in \cite{PSV21} without gravity can be seen in Figure \ref{F2}, and an overview of the possible stagnation points and their singular profiles in our situation with gravity is depicted in Figure \ref{F1}.
	\begin{figure}[!h]
		\includegraphics[width=110mm]{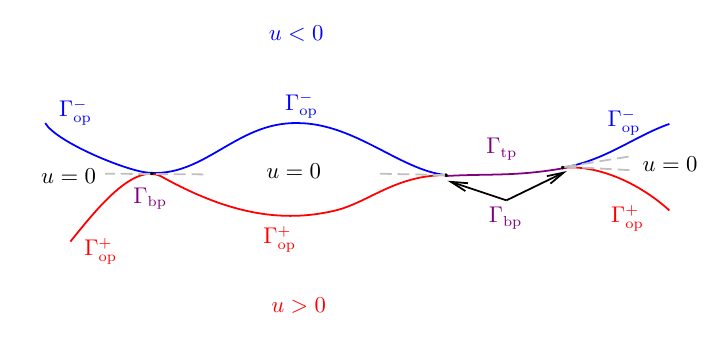}
		\caption{Regular profile of two-phase fluid without gravity.}
		\label{F2}
	\end{figure}
	\begin{figure}[!h]
		\includegraphics[width=155mm]{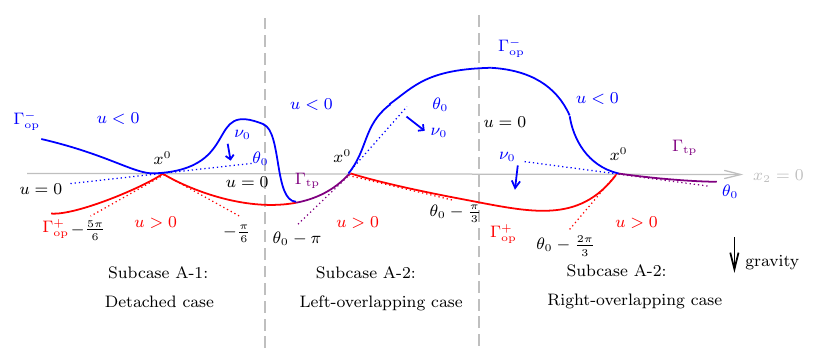}
		\caption{Regular and singular profile of two-phase fluid with gravity.}
		\label{F1}
	\end{figure}
\end{rem}

We start with the variational solution (or the weak solution) $u$ of the two-phase free boundary problem (\ref{eq1.1}). The definitions of the solutions are contained in Appendix B for the sake of completeness.

To obtain the main result we will carry on a blow-up process at the stagnation point $x^0$ to help the singular and regular analysis, which is closely related to the decay rate of $u^\pm$ when approaching the free surface. The Bernstein condition
$$|\nabla u^+|^2\leq Cx_2^- = C\max\{-x_2,0\} \quad \text{in a neighborhood of $x^0$ for some constant $C>0$}$$
implies that $|\nabla u^+|$ decays no slower than $\frac12$-order polynomial rate, and $u^+$ decays no slower than $\frac32$-order polynomial rate. In the partial-degenerate case, the non-degenerate value of
$$|\nabla u^-(x^0)|^2=\lambda>0$$
indicates that the stream function $u^-$ on the interface is expected to decay at a linear rate. In the complete-degenerate case, if we assume that the Bernstein condition holds for both $u^+$ and $u^-$, namely
$$|\nabla u|^2\leq Cx_2^- = C\max\{-x_2,0\} \quad \text{in a neighborhood of $x^0$},$$
then the polynomial decay rates of $u^\pm$ are at most of $\frac32$-order.

Now we state our main theorem for $\lambda>0$, namely the aforementioned Case A. The main results can be divided into two parts: the asymptotics of scaled solution, and the asymptotic behavior of the free surface near the stagnation point $x^0$.

\begin{thmA-1} (Partial-degenerate case, $\lambda>0$)
	Let $u$ be a variational solution of (\ref{eq1.1}) in $D$ with $\lambda>0$, satisfying $|\nabla u^+|^2\leq x_2^-=\max\{-x_2,0\}$ in $D$, where $x^0=(x_1^0,0)\in\Gamma_{\rm bp}$ is a stagnation point of fluid 1 such that $\nabla u^+(x^0)=0$. Let $\{u=0\}$ has locally only finitely many connected components. Then,
	
	{\bf (1) (Asymptotics of $u^-$)} The scaled solution of $u^-$ at $x^0$ converges to a half-plane solution, namely, there exists a unique non-positive function $U_0(x)=-\lambda(x\cdot\nu_0)^-$ with a unique unit vector $\nu_0=\nu_{0,x^0}=(\nu_1^0,\nu_2^0)$ depending only on $x^0$, such that
	$$\frac{u^-(x^0+rx)}{r}\rightarrow -U_0(x)=\lambda(x\cdot\nu_0)^- \quad \text{as} \quad r\rightarrow0+$$
	strongly in $W_{\rm{loc}}^{1,2}(\mathbb{R}^2)$ and locally uniformly in $\mathbb{R}^2$. Moreover, the deflection angle $\theta_0:=\arctan\frac{\nu_1^0}{-\nu_2^0}$ also depends only on $x^0$.
	
	{\bf (2) (Asymptotics of $u^+$)} Meanwhile, suppose furthermore that $u$ is a weak solution of (\ref{eq1.1}) in $D$, then there are only two subcases of the scaled solution of $u^+$ at $x^0$.
	
	(i) (The detached case) If $\left(\partial\{u>0\}\cap\partial\{u<0\}\cap B_r(x^0)\right)$ $\backslash\{x^0\}=\varnothing$ for some small enough $r$, which means that $\Gamma_{\rm tp}$ is an isolated point in $B_r(x^0)$, then the scaled solution of $u^+$ at $x^0$ converges to the Stokes corner flow $V_0(x)$, namely,
	\begin{equation*}
	\frac{u^+(x^0+rx)}{r^{3/2}}\rightarrow V_0(x)=
	\begin{cases}
	\frac{\sqrt{2}}3 \rho^{3/2} \cos \left( \frac32\theta+\frac{3\pi}{4} \right), \quad -\frac{5\pi}{6}<\theta<-\frac{\pi}{6}, \\
	0, \qquad\qquad\qquad\qquad\quad\ \text{otherwise,}
	\end{cases}
	\end{equation*}
	as $r\rightarrow0+$ strongly in $W_{\rm{loc}}^{1,2}(\mathbb{R}^2)$ and locally uniformly in $\mathbb{R}^2$, where $x=(\rho\cos\theta, \rho\sin\theta)$. In this case, the deflection angle
	$\theta_0:=\arctan\frac{\nu_1^0}{-\nu_2^0}\in(-\frac{\pi}{6},\frac{\pi}{6})$. Furthermore, the free boundary $\partial\{u>0\}$ is the union of two $C^1$-graphs in $B_r(x^0)$. (See Figure \ref{corner1-1g}.)
	\begin{figure}[!h] 
		\includegraphics[width=90mm]{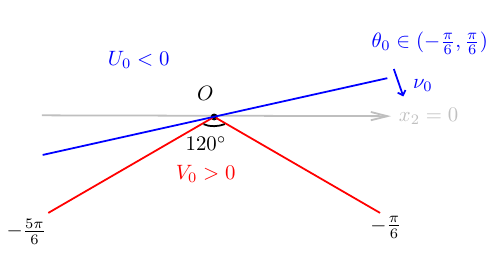}
		\caption{The blow-up limit of $u$ in detached case.}
		\label{corner1-1g}
	\end{figure}
	
	(ii) (The overlapping case) If $\left(\partial\{u>0\}\cap\partial\{u<0\}\cap B_r(x^0)\right)$ $\backslash\{x^0\}\neq\varnothing$ for any small enough $r$, which means that $\Gamma_{\rm tp}$ is not an isolated point in $B_r(x^0)$, then the scaled solution of $u^+$ at $x^0$ converges to the Stokes-type corner flow $V_0(x)$, namely, in the left-overlapping subcase,
	\begin{equation*}
	\frac{u^+(x^0+rx)}{r^{3/2}}\rightarrow V_0(x)=
	\begin{cases}
	\frac23 \sqrt{-\sin\left( \theta_0-\frac{\pi}{3} \right)} \rho^{3/2} \cos \left( \frac32\theta-\frac32\theta_0+\pi \right), \quad \theta_0-\pi<\theta<\theta_0-\frac{\pi}{3}, \\
	0, \qquad\qquad\qquad\qquad\qquad\qquad\qquad\qquad\quad\ \ \text{otherwise},
	\end{cases}
	\end{equation*}
	as $r\rightarrow0+$ strongly in $W_{\rm{loc}}^{1,2}(\mathbb{R}^2)$ and locally uniformly in $\mathbb{R}^2$, and in the right-overlapping subcase,
	\begin{equation*}
	\frac{u^+(x^0+rx)}{r^{3/2}}\rightarrow V_0(x)=
	\begin{cases}
	\frac23 \sqrt{-\sin\left( \theta_0-\frac{2\pi}{3} \right)} \rho^{3/2} \cos \left( \frac32\theta-\frac32\theta_0+\frac{\pi}{2} \right), \quad \theta_0-\frac{2\pi}{3}<\theta<\theta_0, \\
	0, \qquad\qquad\qquad\qquad\qquad\qquad\qquad\qquad\qquad \text{otherwise},
	\end{cases}
	\end{equation*}
	as $r\rightarrow0+$ strongly in $W_{\rm{loc}}^{1,2}(\mathbb{R}^2)$ and locally uniformly in $\mathbb{R}^2$, where $x=(\rho\cos\theta, \rho\sin\theta)$. In these two overlapping subcases, the deflection angle $\theta_0:=\arctan\frac{\nu_1^0}{-\nu_2^0}\in[-\frac{\pi}{3},\frac{\pi}{3}]$ is uniquely determined by $\nu_0$. (See Figure \ref{corner1-2g}.)
	
	Furthermore, the free boundary $\partial\{u>0\}$ is the union of two $C^1$-graphs in $B_r(x^0)$.
	\begin{figure}[!h] 
		\includegraphics[width=160mm]{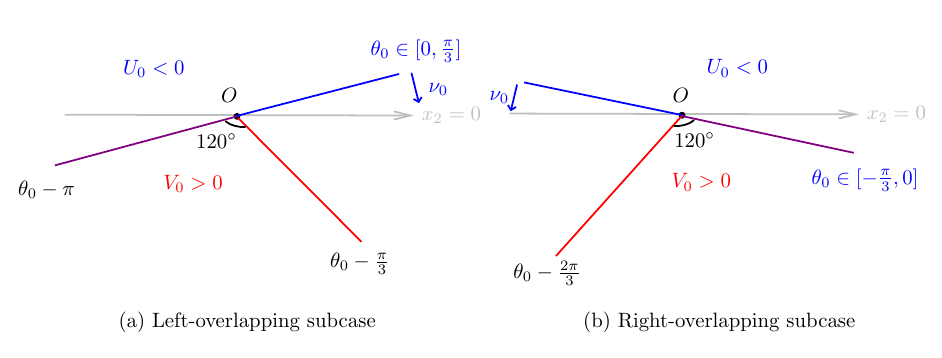}
		\caption{The blow-up limit of $u$ in overlapping case.}
		\label{corner1-2g}
	\end{figure}
	
\end{thmA-1}

\begin{rem}
	We use the notation $-U_0$ and $V_0$ in Theorem A-1 to denote the blow-up limit of $u^-$ and $u^+$ at $x^0$ respectively. In fact, we will prove in the subsequent sections that $U_0$ and $V_0$ have disjoint support, hence the function
	\begin{equation*}
	u_0(x):=
	\begin{cases}
	U_0(x) \quad \text{in} \quad \{x\in\mathbb{R}^2|U_0(x)<0\}, \\
	V_0(x) \quad \text{in} \quad \{x\in\mathbb{R}^2|V_0(x)>0\}, \\
	0, \qquad\ \text{otherwise,}
	\end{cases}
	\end{equation*}
	is well-defined, and we can denote
	$$u_0^-:=\max\{-u_0,0\}=-U_0(x) \quad \text{and} \quad u_0^+:=\max\{u_0,0\}=V_0(x).$$
\end{rem}

\begin{rem}
	The proof of the asymptotic profile of the two-phase fluid is based on the blow-up analysis, monotonicity formula and frequency formula. However, the heuristic behind the proof of Theorem A-1 is that $u^\pm$ possess different decay rates near the stagnation point, leading to different underlying scaling orders. Consequently, the analysis of $u^+$ and $u^-$ must be conducted respectively. In dealing with $u^-$, which decays slower than $u^+$ at linear rate and is proved to be the primary part in $u = u^+ - u^-$, we directly apply the monotonicity formula for the two-phase solution $u$. This approach is valid because the scaled version of $u^+$, under the linear scaling $\frac{u^+(x^0+rx)}{r}$, is approaching $0$ and cannot be seen in the blow-up process. Nevertheless, when handling $u^+$, some technical adjustments to the monotonicity formula are required since $u^-$ will blow up to infinity under the $\frac32$-order polynomial scaling $\frac{u^-(x^0+rx)}{r^{3/2}}$. To overcome this difficulty, we will pick some special test functions to eliminate the terms of $u^-$ in the monotonicity formula and the frequency formula, allowing us to focus only on the behavior of $u^+$.
\end{rem}

\begin{rem}
	To classify the singular profiles at the stagnation point on the two-phase free boundary, we utilize the tool of frequency formula, which was originally used in \cite{A00} for Q-valued harmonic functions and was later developed in \cite{VW11} to investigate the singularity at the stagnation point of one-phase gravity water wave. Notice that Garcia, V$\check{a}$rv$\check{a}$ruc$\check{a}$ and Weiss said in \cite{GVW16} that,
	
	... This is not a complete surprise as there are hitherto no known frequency formulas for two-phase Stefan problems, and possibly the elliptic system is more akin to that group of problems.
	
	They claimed that there had been no frequency formula for two-phase fluids yet. Fortunately, in our case, the free surface for $u^-$ is smooth and to investigate the profile of free surface of $u^+$, we can establish the frequency formula only involving the positive phase $u^+$.
\end{rem}

In the following, we will discover the profile of the free boundaries near the possible stagnation point $x^0=(x_1^0,0)$ of fluid 1, and we assume that $\partial\{u>0\}$ is locally an injective curve, and give the possible profiles for $u^+$ of the free boundaries close to $x^0$ as in Theorem A-2.

\begin{thmA-2} (Partial-degenerate case, $\lambda>0$)
	Let $u$ be a variational solution of (\ref{eq1.1}) in $D$ with $\lambda>0$, satisfying $|\nabla u^+|^2\leq x_2^-=\max\{-x_2,0\}$ in $D$, where $x^0=(x_1^0,0)\in\Gamma_{\rm bp}$ is a stagnation point of fluid 1 such that $\nabla u^+(x^0)=0$. Then at the stagnation point $x^0$,
	
	{\bf (1) (Regular profile of $\partial\{u<0\}$)} the free boundary of $\partial\{u<0\}$ is locally a $C^{1,\alpha}$-curve in a small neighborhood of $x^0$.
	
	{\bf (2) (Singular profile of $\partial\{u>0\}$)} Suppose furthermore that $u$ is a weak solution of (\ref{eq1.1}) in $D$, and the free boundary $\partial\{u>0\}$ is a continuous injective curve $\sigma(t)=(\sigma_1(t),\sigma_2(t))$ with $t\in(-1,1)$ such that $\sigma(0)=x^0$, then there are only two cases.
	
	In the detached case, $\partial\{u>0\}$ is the union of two $C^1$-graphs in a small neighborhood $B_r(x^0)$ with $r<R$ of functions
	$$\eta_1:(x_1^0-\delta, x_1^0]\rightarrow\mathbb{R} \quad \text{and} \quad \eta_2:[x_1^0,x_1^0+\delta)\rightarrow\mathbb{R}$$
	which are both continuously differentiable up to $x^0$ and satisfy
	$$\eta_1'(x_1^0-)=\frac{1}{\sqrt{3}} \quad \text{and} \quad \eta_2'(x_1^0+)=-\frac{1}{\sqrt{3}}.$$
	(See Figure \ref{corner2-1g}.)
	\begin{figure}[!h]
		\includegraphics[width=95mm]{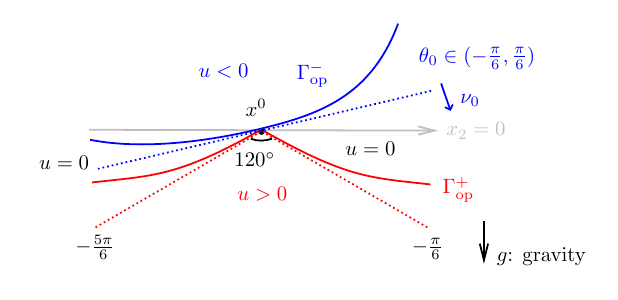}
		\caption{The asymptotic profile of the free boundaries in detached case}
		\label{corner2-1g}
	\end{figure}
	
	In the overlapping case, $\partial\{u>0\}$ is the union of two $C^1$-graphs in a small neighborhood $B_r(x^0)$ with $r<R$ of functions
	$$\eta_1:(x_1^0-\delta, x_1^0]\rightarrow\mathbb{R} \quad \text{and} \quad \eta_2:[x_1^0,x_1^0+\delta)\rightarrow\mathbb{R}$$
	which are both continuously differentiable up to $x^0$ and satisfy
	\begin{equation*}
	\begin{cases}
	\eta_1'(x_1^0-)=\tan\theta_0 \quad \text{and} \quad \eta_2'(x_1^0+)=\tan\left(\theta_0-\frac{\pi}{3}\right) \quad\ \text{in left-overlapping case,} \\
	\eta_1'(x_1^0-)=\tan\left( \theta_0-\frac{2\pi}{3} \right) \quad \text{and} \quad \eta_2'(x_1^0+)=\tan\theta_0 \quad \text{in right-overlapping case,}
	\end{cases}
	\end{equation*}
	where $\theta_0$ is uniquely determined by $\nu_0$. (See Figure \ref{corner2-2g}.)
	\begin{figure}[!h]
		\includegraphics[width=160mm]{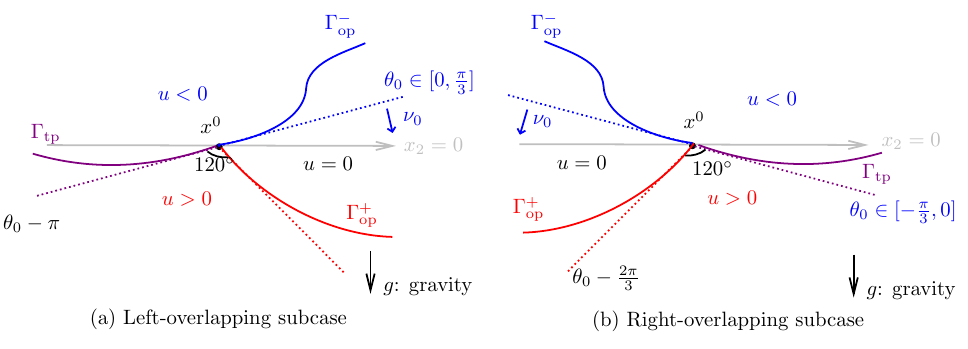}
		\caption{The asymptotic profile of the free boundaries in overlapping case}
		\label{corner2-2g}
	\end{figure}
\end{thmA-2}

\begin{rem}
	Our results coincide with the regularity result of the free boundary for one-phase flow in \cite{AC81}\cite{C87}\cite{C88} and \cite{C89} once $u^+\equiv0$, and also coincide with the singularity analysis of the stagnation point for one-phase gravity water wave in \cite{VW11} once $u^-\equiv0$.
\end{rem}

\begin{rem}
	In the detached case, the two fluids contact at only one point $\Gamma_{\rm bp}$, and the two-phase free boundary $\Gamma_{\rm tp}$ is an isolated point. The singular profile of the positive phase in this case is a symmetric Stokes corner, and the interaction of the two phases can be ignored. However, in the overlapping case, the two fluids stick together and push against each other on a common section of the two-phase free boundary, which breaks the symmetry of the asymptotic Stokes corner of $\partial\{u>0\}$.
\end{rem}

\begin{rem}
	The convexity of a periodic Stokes wave of extreme form is established by Plotnikov and Toland in \cite{PT04}. Hence, the profile of the Stokes-type corner with the angle of $120^\circ$ for the free boundary of $u^+$ at the stagnation point $x^0$ can be hence confirmed convex.
\end{rem}

\begin{rem}
	The classification of the detached case and the overlapping case is complete because the stagnation point $x^0\in\Gamma_{\rm bp}$ is either an isolated point of $\Gamma_{\rm tp}$ or not. We give a further explanation about the division of left-overlapping and right-overlapping. Naturally in a small neighborhood $B_R(x^0)$ of $x^0$, the free boundary $\partial\{u>0\}\cap B_R(x^0)$ falls in the third quadrant $\mathcal{Q}_3=\{(x_1,x_2)\ |\ x_1<x_1^0,x_2\leq0\}$ and the fourth quadrant $\mathcal{Q}_4=\{(x_1,x_2)\ |\ x_1>x_1^0,x_2\leq0\}$ since it has an angle of $120^\circ$ asymptotically in $\{x_2<0\}\cap B_R(x^0)$. Hence, if it shares a common boundary with $\partial\{u<0\}$ in $B_R(x^0)\backslash\{x^0\}$, which is nearly a half-plane, then the overlapping part $\Gamma_{\rm tp}$ falls either in the third quadrant, namely the left-overlapping case, or in the fourth quadrant, namely the right-overlapping case. These two subcases have no essential distinction. See Figure \ref{lr} for an illustration of either scenario.
\end{rem}

\begin{figure}[!h]
	\includegraphics[width=160mm]{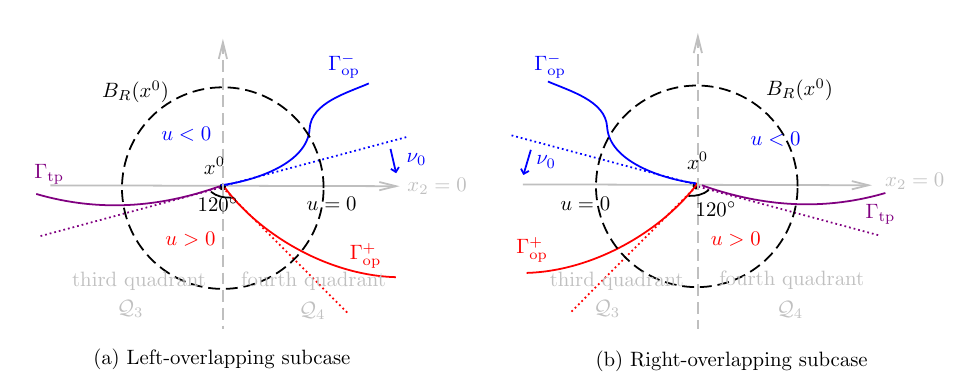}
	\caption{The division of left-overlapping and right-overlapping}
	\label{lr}
\end{figure}

\begin{rem}
	The positive phase $u^+$ occupies asymptotically a corner of $120^\circ$ acted on by gravity, which also has influence on the negative phase $u^-$. In the one-phase case when $u_0^+\equiv0$, the deflection angle $\theta_0$ of $u^-$ at regular free boundary point falls in $[-\pi,\pi)$, while in the two-phase situation,
	\begin{equation*}
	\theta_0\in
	\begin{cases}
	(-\frac{\pi}{6},\frac{\pi}{6}) \quad \text{in the detached case}, \\
	[-\frac{\pi}{3},\frac{\pi}{3}] \quad\, \text{in the overlapping case}.
	\end{cases}
	\end{equation*}
\end{rem}

Now we come to the case $\lambda=0$, namely the aforementioned Case B. In this case, $x^0=(x_1^0,0)$ is a stagnation point for both fluid 1 and fluid 2, namely, $|\nabla u^+(x^0)|=|\nabla u^-(x^0)|=0$. Naturally, we are interested in such points on $\Gamma_{\rm tp}$. However, the following Theorem B says that there is no such stagnation point on two-phase free boundary $\Gamma_{\rm tp}$, and the complete degenerate stagnation points must lie on the one-phase free boundaries $\Gamma_{\rm op}^\pm$.

\begin{thmB} (Complete-degenerate case, $\lambda=0$)
	Let $u$ be a weak solution of (\ref{eq1.1}) in $D$ with $\lambda=0$, and $x^0=(x_1^0,0)$ be a stagnation point on the free boundaries $\partial\{u>0\}\cup\partial\{u<0\}$ such that the following conditions hold:
	
	(1) $|\nabla u^+(x^0)|=|\nabla u^-(x^0)|=0$;
	
	(2) The strong Bernstein estimate holds that $|\nabla u|^2\leq x_2^-=\max\{-x_2,0\}$ in $D$;
	
	(3) $\{u=0\}$ has locally only finitely many connected components.
	
	Then such stagnation point $x^0$ satisfying (1)-(3) must be a one-phase free boundary point on $\Gamma_{\rm op}^+$ or $\Gamma_{\rm op}^-$. The scaled solution of $u$ converges to a nonnegative or nonpositive function
	\begin{equation*}
	\frac{u(x^0+rx)}{r^{3/2}}\rightarrow u_0(x)=
	\begin{cases}
	\pm\frac{\sqrt{2}}3 \rho^{3/2} \cos \left( \frac32\theta+\frac{3\pi}{4} \right), \quad -\frac{5\pi}{6}<\theta<-\frac{\pi}{6}, \\
	0, \qquad\qquad\qquad\qquad\quad\quad \text{otherwise,}
	\end{cases}
	\end{equation*}
	as $r\rightarrow0+$ strongly in $W_{\rm{loc}}^{1,2}(\mathbb{R}^2)$ and locally uniformly in $\mathbb{R}^2$, where $x=(\rho\cos\theta, \rho\sin\theta)$.
	
	Furthermore, suppose that the free boundary $\partial\{u>0\}$ or $\partial\{u<0\}$ is a continuous injective curve $\sigma(t)=(\sigma_1(t),\sigma_2(t))$ such that $\sigma(0)=x^0=(x^1_0,0)$. Then the free boundary $\partial\{u>0\}$ or $\partial\{u<0\}$ is the union of two $C^1$-graphs in a small neighborhood $B_r(x^0)$ of functions
	$$\eta_1:(x_1^0-\delta, x_1^0]\rightarrow\mathbb{R} \quad \text{and} \quad \eta_2:[x_1^0,x_1^0+\delta)\rightarrow\mathbb{R}$$
	which are both continuously differentiable up to $x^0$ and satisfy
	$$\eta_1'(x_1^0-)=\frac{1}{\sqrt{3}} \quad \text{and} \quad \eta_2'(x_1^0+)=-\frac{1}{\sqrt{3}}.$$
	See Figure \ref{corner3}.
	\begin{figure}[!h] 
		\includegraphics[width=130mm]{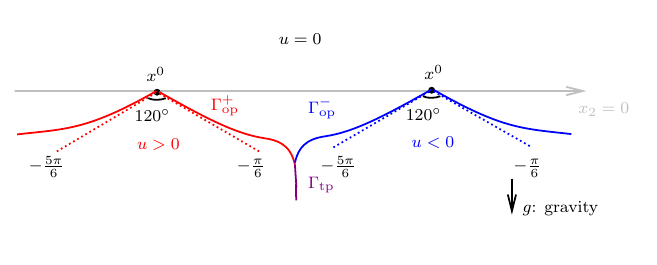}
		\caption{Wave profile for $\lambda=0$.}
		\label{corner3}
	\end{figure}
\end{thmB}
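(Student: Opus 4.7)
The plan is to reduce Theorem B to the one-phase Stokes conjecture analysis of \cite{VW11} by proving that a stagnation point with $|\nabla u^+(x^0)|=|\nabla u^-(x^0)|=0$ cannot genuinely lie on $\Gamma_{\rm tp}$. The key observation that enables this reduction is that $\lambda=0$ restores full symmetry between the two phases: the transition condition on $\Gamma_{\rm tp}$ collapses to $|\nabla u^+|^2=|\nabla u^-|^2$, and on each one-phase free boundary we have $|\nabla u^\pm|^2=-x_2$. Combined with the strong Bernstein estimate $|\nabla u|^2\leq x_2^-$ in hypothesis (2), each phase $u^\pm$ individually inherits exactly the structural assumptions of the one-phase water wave in \cite{VW11}, so I can run the blow-up machinery symmetrically on $u^+$ and $u^-$.

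First I would set up Weiss-type monotonicity and Almgren frequency formulas separately for $u^+$ and $u^-$ centered at $x^0$, following the template already developed for the positive phase in Theorem A-1. The technical trick highlighted in the remarks there carries over: test functions are chosen so as to be supported away from the opposite phase, decoupling the two identities. This is especially clean in the present setting because $\lambda=0$ eliminates the Bernoulli jump term and because $u\equiv 0$ in $B_R(x^0)\cap\{x_2\geq 0\}$ by the setup in Case B. The $3/2$-homogeneous rescalings $u^\pm_r(x):=r^{-3/2}u^\pm(x^0+rx)$ then admit subsequential blow-up limits $u_0^\pm$ that are $3/2$-homogeneous, harmonic on their positivity sets, and satisfy $|\nabla u_0^\pm|^2=-x_2$ on their free boundaries. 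The Varvaruca--Weiss classification forces each $u_0^\pm$ to be either identically zero or the canonical Stokes corner $\frac{\sqrt 2}{3}\rho^{3/2}\cos\bigl(\frac{3}{2}\theta+\frac{3\pi}{4}\bigr)$ supported in the wedge $\{-\frac{5\pi}{6}<\theta<-\frac{\pi}{6}\}$.

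The decisive geometric step is that gravity pins the orientation of the Stokes wedge --- there is no rotational freedom --- so any nontrivial $u_0^\pm$ must occupy exactly the same downward $120^\circ$ sector. Since the supports of $u^+$ and $u^-$ are disjoint pointwise, so are the supports of $u_0^+$ and $u_0^-$; two copies of the same wedge cannot coexist disjointly, so at least one of $u_0^+, u_0^-$ must vanish identically. Up to interchanging the roles of the two phases, assume $u_0^-\equiv 0$. What remains is to upgrade this blow-up statement to genuine local vanishing $u^-\equiv 0$ in some $B_r(x^0)$, which places $x^0\in\Gamma_{\rm op}^+$ and reduces the remaining assertions of Theorem B --- strong $W^{1,2}_{\rm loc}$ convergence of $u^+_r$ to the Stokes corner, and the $C^1$-graph decomposition of $\partial\{u>0\}$ with one-sided slopes $\pm 1/\sqrt{3}$ --- to a direct invocation of \cite{VW11}.

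For this upgrade I would argue by contradiction: suppose $\{u<0\}\cap B_r(x^0)\neq\varnothing$ for every $r>0$. Hypothesis (3) that $\{u=0\}$ has locally only finitely many connected components guarantees a fixed connected component of $\{u<0\}$ accumulating at $x^0$, on whose boundary the active Bernoulli condition $|\nabla u^-|^2=-x_2$ holds. Running the Weiss monotonicity upper bound together with the Bernoulli-driven non-degeneracy estimate of \cite{VW11} for $u^-$ on this component pins the Weiss density of $u^-$ at $x^0$ at exactly $\frac{3}{2}$, which in turn forces a nontrivial Stokes corner blow-up for $u^-$, contradicting $u_0^-\equiv 0$. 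The main obstacle I expect is precisely this non-degeneracy step: ruling out pathological configurations in which $u^-$ is nontrivial in every ball around $x^0$ but decays strictly faster than $r^{3/2}$. Hypothesis (3) is indispensable here, as it prevents a cascade of vanishingly thin components of $\{u<0\}$ clustering at $x^0$ that would evade detection at the $3/2$-scale.
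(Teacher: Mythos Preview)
Your overall strategy is correct and aligns with the paper's: show that at a two-phase stagnation point the $3/2$-homogeneous blow-ups of $u^+$ and $u^-$ cannot both be the Stokes wedge (by disjointness of supports and the gravity-fixed orientation), then upgrade the vanishing of one blow-up to local vanishing of that phase, contradicting $x^0\in\Gamma_{\rm tp}$.

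The place where your argument diverges from the paper, and becomes shakier, is the upgrade step. You frame it as a non-degeneracy argument requiring hypothesis (3) and leading to ``Weiss density at exactly $\tfrac{3}{2}$'' (which is a confusion: $\tfrac{3}{2}$ is the homogeneity degree, while the Weiss density $M_\pm(0+)$ takes values in $\{0,\tfrac{\sqrt{3}}{2},1\}$). More substantively, your deduction ``$u_0^-\equiv 0\Rightarrow u^-\equiv 0$ locally'' is not immediate: $u_0^-\equiv 0$ is compatible with $M_-(0+)=1$ (the horizontally-flat-for-$u^-$ scenario), in which case the cusp-exclusion argument for $u^-$ does not apply. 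The paper handles this more cleanly: its density classification (Proposition~\ref{density2}) shows that in \emph{every} case at least one of $M_+(0+),M_-(0+)$ equals $0$, because $\chi_0^+$ and $\chi_0^-$ are disjoint indicators; one then applies the cusp-exclusion lemma (Proposition~\ref{prop2}) to \emph{that} phase. This lemma uses only the strong Bernstein estimate (your hypothesis (2)) via the flux/box argument of \cite{VW11}, not hypothesis (3). Hypothesis (3) enters only afterward, once the problem is reduced to one phase, to invoke the full \cite{VW11} machinery (frequency formula excluding horizontally flat singularities) for the blow-up classification and the $C^1$-graph conclusion. So your identification of hypothesis (3) as ``indispensable'' for the upgrade is misplaced; it is hypothesis (2) that does the work there.
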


\begin{rem}
	Theorem B coincides with the result in one-phase case. In fact, if $u^-\equiv0$ in the fluid domain, then the stagnation point is naturally supposed to be on $\partial\{u>0\}$, and the singular profile of $\partial\{u>0\}$ at $x^0$ is asymptotically a symmetric $120^\circ$ Stokes corner.
\end{rem}

\begin{rem}
	It is straightforward to deduce that if $u$ satisfies the conditions in Theorem B when $\lambda=0$, then $|\nabla u^\pm|$ will not vanish on its two-phase free boundary $\Gamma_{\rm tp}$, which implies that $\Gamma_{\rm tp}$ is $C^{1,\alpha}$ smooth by \cite{PSV21}, and can be bootstrapped to higher regularity.
\end{rem}

Theorem B excludes the possible case in Figure \ref{corner3-1} for $\lambda=0$, where $x^0$ is a two-phase stagnation point, and $u^+$ and $u^-$ share a corner of $120^\circ$ asymptotically. It shows that the complete-degenerate case $\lambda=0$ is quite different from the partial-degenerate case $\lambda>0$ since the stagnation point can only be on the one-phase free boundary. We proceed by way of contradiction and suppose $x^0\in\Gamma_{\rm tp}$ is a stagnation point. The assumption of strong Bernstein estimate for both $u^\pm$ shows that neither of $u^\pm$ could have cusp behavior, while the monotonicity formula for two-phase fluid implies that each connector component for the blow-up $u_0^\pm$ should be a corner of $120^{\circ}$. These two facts are not compatible, hence give the desired conclusion. Once we exclude the case that the stagnation point $x^0\in\Gamma_{\rm tp}$ and derive that $x^0\in\Gamma_{\rm op}^\pm$, the singular profile given in Theorem B is a straightforward consequence according to the one-phase work \cite{VW11}.

\begin{figure}[!h] 
	\includegraphics[width=100mm]{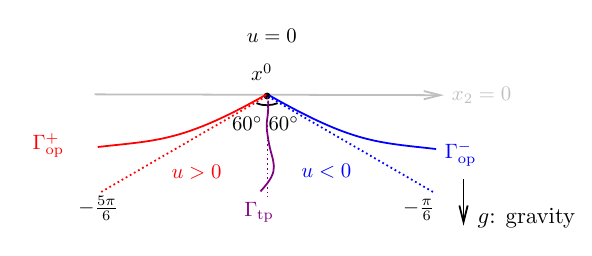}
	\caption{An excluded singular profile.}
	\label{corner3-1}
\end{figure}

Before closing this introduction, we will give an overview of the organization of this paper. In Section 2, we consider the case A where $\lambda>0$. We give two monotonicity formulas for both $u^+$ and $u^-$ in Section 2.1 to study blow-up limits. In this case, the free boundary of $u^-$ can be proved to be $C^{1,\alpha}$-smooth, and it remains to analyze the singularity of the free boundary $\partial\{u>0\}$ for $u^+$. Furthermore, we get the Lebesgue densities for $u^+$ in Section 2.2, leading the possible asymptotic singular profiles, such as Stokes-type corner, cusp, horizontal flatness of the free boundary of $u^+$ near the stagnation point. In Section 2.3-2.5 we exclude the cases with cusps and horizontally flat singularities, and obtain the consequence of Theorem A-1 and Theorem A-2. In Section 3, we consider the case B where $\lambda=0$, using monotonicity formula and the strong Bernstein estimate to prove Theorem B. The Appendix is a compensate for some definitions, and the concentration of compactness in frequency formula.

\section{The partial-degenerate case}

In this section, we consider the partial-degenerate case $\lambda>0$, namely $|\nabla u^+(x^0)|=0$ and $|\nabla u^-(x^0)|>0$ for the stagnation point $x^0=(x_1^0,0)\in\Gamma_{\rm tp}$, which means that the velocity of the fluid 1 is degenerate at $x^0$, while the velocity of the fluid 2 does not degenerate at the two-phase interface point $x^0$. We will use the tool of monotonicity formula to make a blow-up analysis at $x^0$ for $u^\pm$ respectively, which is substantially different from the one-phase work \cite{VW11} since the convergence rates of $u^\pm$ approaching $0$ are different. We will deal with $u^-$ first, obtaining the regularity of $\partial\{u<0\}$ near $x^0$, and then handle $u^+$ and get its weighted density at $x^0$, roughly $\frac{|\{u>0\}\cap B_r(x^0)|}{|B_r(x^0)|}$ for small enough $r$. It is then possible to exclude cusps and horizontally flat singularities for $u^+$ by strong Bernstein assumption and frequency formula. All those discussion leads to Theorem A-1 and A-2.

Unless stated otherwise, in this section we suppose the stagnation point $x^0=(x_1^0,0)\in\Gamma_{\rm bp}$ is of fluid 1 such that $|\nabla u^+(x^0)|=0$. For clarity of exposition, we denote the set of \emph{branch stagnation points}
$$S^u_+:=\left\{ x=(x_1,0)\in \Gamma_{\rm bp} \ \Big| \ |\nabla u^+(x)|=0 \right\},$$
where $u$ is a variational solution of (\ref{eq1.1}) in $D$ with $\lambda>0$.

We first give the following assumptions, which is somewhat equivalent to the Bernstein estimate to the positive-phase fluid.

\begin{assume} \label{assume}
	(Bernstein estimate) There is a positive constant $C$ such that $|\nabla u^+|^2 \leq C x_2^-$ locally in $D$.
	
\end{assume}

The assumption \ref{assume} can also be found in \cite{VW11} for one-phase water waves as a version of Rayleigh-Taylor condition, providing some growth assumptions on $u$ to help us study the asymptotic behavior of $u$ near the stagnation point $x^0$. Here in two-phase case, we continue to require such assumption due to the fact that $u^+$ satisfies
$$\Delta (|\nabla u^+|^2)\geq0 \quad \text{in} \quad D\cap\{u>0\}$$
from direct calculation, which means that the pressures $P^+$ of the positive phase is a superharmonic function by means of Bernoulli's law. Under suitable boundary conditions we can construct barrier solution to deduce such growth condition for $u^+$. Similarly we can also deduce that
$$|\nabla u^-|^2 \leq C \quad \text{locally in $D$},$$
which is a natural byproduct implying that $u$ is Lipschitz.

\subsection{Monotonicity formula}

The tool of monotonicity formula is widely employed in blow-up analysis, which plays a key role in proving the homogeneity of the blow-up limit, indicating the regularity or singularity and helping the analysis on the explicit form of blow-up limit. It is a primary approach to study the regularity of free boundary, for example in \cite{AC81} and \cite{V23} for one-phase problem, in \cite{ACF84} and \cite{PSV21} for two-phase problem. To investigate the singular profile of the free boundary near the stagnation points, V$\check{a}$rv$\check{a}$ruc$\check{a}$ and Weiss introduced firstly the monotonicity formula in one-phase water wave with gravity in \cite{VW11}, and it was generated in the case with vorticity in \cite{VW12}, the axisymmetric case in \cite{VW14}, and the compressible case in \cite{DY}.

In what follows we will construct two monotonicity formulas respectively for $u^-$ and $u^+$, which correspond to their different decay rates as analysed in Section 1.5.

\subsubsection{Monotonicity formula for $u^-$}

In the following context we recall some known results for $u^-$ near $x^0$, where $|\nabla u^-(x^0)|>0$, referred to \cite{ACF84},\cite{PSV21} and \cite{S11} and references therein. Notice that the Bernstein estimate
$$|\nabla u^+|^2 \leq Cx_2^- \quad \text{locally in $D$}$$
implies that $u^+$ decays no slower than $\frac32$-order polynomial rate near $x^0$, and
$$|\nabla u^-|^2 \leq C \quad \text{locally in $D$}$$
implies that $u^-$ decays no slower than a linear function near $x^0$. Hence, $u^-$ plays a primary part in $u=u^+ - u^-$ comparing the convergence rates of $u^\pm$, and we construct the monotonicity formula in Proposition \ref{mono1} for $u^-$, which involves terms of $u^+$ but have no bad influence when calculating $u^-$ since these terms approach $0$.

Before we give the precise statement, we first define the following energy functionals in $B_r(x^0)$,
\begin{equation}
I(r)=I_{x^0,u}(r)=r^{-2}\int_{B_r(x^0)}\left( |\nabla u|^2 + (-x_2)\chi_{\{u>0\}} + (-x_2+\lambda^2)\chi_{\{u<0\}} \right)dx,
\end{equation}
and the $L^2$-boundary energy,
\begin{equation}
J(r)=J_{x^0,u}(r)=r^{-3}\int_{\partial B_r(x^0)}  u^2 d\mathcal{S},
\end{equation}
where $d\mathcal{S}=d\mathcal{H}^1$ with $\mathcal{H}^1$ denotes the $1$-dimensional Hausdorff measure, and the so-called \emph{Weiss boundary adjusted energy}
\begin{equation}
\begin{aligned}
M(r)&=M_{x^0,u}(r) \\
&=I(r)-J(r) \\
&=r^{-2}\int_{B_r(x^0)}\left( |\nabla u|^2 + (-x_2)\chi_{\{u>0\}} + (-x_2+\lambda^2)\chi_{\{u<0\}} \right)dx-r^{-3}\int_{\partial B_r(x^0)}  u^2 d\mathcal{S}.
\end{aligned}
\end{equation}
These definitions were first introduced by Weiss in \cite{VW11}, Theorem 3.5, which gave a one-phase monotonicity formula at non-degenerate free boundary point. In the two-phase case, we utilize this monotonicity formula to carry on the blow-up analysis for $u^-$, since the terms of $u^+$ is roughly disappeared in the scaling version of $M(r)$.

\begin{prop}(Monotonicity formula for $u$) \label{mono1}
	Let $u$ be a variational solution of (\ref{eq1.1}) in $D$ with $\lambda>0$ and $x^0=(x_1^0,0)\in\Gamma_{\rm tp}$.
	Then, for a.e. $r\in(0,R/2)$,
	\begin{equation*}
	\frac{d}{dr} M(r)=2r^{-2}\int_{\partial B_r(x^0)} \left( \nabla u\cdot\nu-\frac{u}r \right)^2 d\mathcal{S} - r^{-3}\int_{B_r(x^0)} x_2 \chi_{\{u\neq0\}} dx .
	\end{equation*}
\end{prop}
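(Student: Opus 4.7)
The plan is to prove the monotonicity formula by directly differentiating $M(r) = I(r) - J(r)$ via the coarea formula, and then converting the resulting boundary terms through the domain-variation (inner-variation) identity satisfied by the variational solution $u$. After translating so that $x^0 = 0$, which is harmless because $x_2^0 = 0$ leaves the weights $Q_+(x) = -x_2$ and $Q_-(x) = -x_2 + \lambda^2$ unchanged, the scheme follows the one-phase template of V\'arv\'aruc\v a--Weiss. The new feature here is the coexistence of two free-boundary densities, and one of the key checks I will have to carry out is that the $\lambda^2$ contribution cancels identically in the final formula, leaving only the gravity term $-x_2 \chi_{\{u\neq 0\}}$.

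The calculation will proceed in three steps. First, differentiating $J$ directly using the parametrization $x = r\omega$, $\omega \in S^1$, I obtain
\begin{equation*}
\frac{d}{dr}J(r) = -\frac{2}{r^4}\int_{\partial B_r}u^2\,d\mathcal{S} + \frac{2}{r^3}\int_{\partial B_r}u\,\nabla u\cdot\nu\,d\mathcal{S},
\end{equation*}
and the coarea formula provides the analogous expression for $\frac{d}{dr}I(r)$. Second, I would establish a Pohozaev-type identity $\int_{B_r}|\nabla u|^2\,dx = \int_{\partial B_r}u\,\nabla u\cdot\nu\,d\mathcal{S}$ by applying Green's formula separately on $B_r\cap\{u>0\}$ and $B_r\cap\{u<0\}$, using $\Delta u = 0$ in $\{u\neq 0\}$ and the vanishing of $u$ on the free boundary; this trades the bulk gradient integral for a boundary integral. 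Third, and most importantly, I would feed the radial test field $\phi(x) = \zeta_\varepsilon(|x|)\,x$ (with $\zeta_\varepsilon$ a smooth approximation of $\chi_{[0,r]}$) into the inner-variation identity for $u$. Computing $\dvg\phi = 2\zeta_\varepsilon + \zeta_\varepsilon'|x|$, $\langle D\phi\,\nabla u, \nabla u\rangle = \zeta_\varepsilon|\nabla u|^2 + \zeta_\varepsilon'|x|(\nabla u\cdot e_r)^2$, and using $\nabla Q_\pm = -e_2$, the limit $\varepsilon\to 0$ yields
\begin{multline*}
r\int_{\partial B_r}\bigl(|\nabla u|^2 - 2(\nabla u\cdot\nu)^2 + Q_+\chi_{\{u>0\}} + Q_-\chi_{\{u<0\}}\bigr)d\mathcal{S} \\
= 2\int_{B_r}\bigl(Q_+\chi_{\{u>0\}} + Q_-\chi_{\{u<0\}}\bigr)dx - \int_{B_r} x_2\chi_{\{u\neq 0\}}\,dx.
\end{multline*}

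Substituting the three identities into $\frac{d}{dr}M = \frac{d}{dr}I - \frac{d}{dr}J$ causes the bulk integrals of $Q_\pm\chi$ to appear with opposite signs and cancel, which is precisely where the $\lambda^2\chi_{\{u<0\}}$ piece disappears; the boundary terms involving $|\nabla u|^2$ and $u\,\nabla u\cdot\nu$ reorganize into the perfect square $2r^{-2}\int_{\partial B_r}(\nabla u\cdot\nu - u/r)^2\,d\mathcal{S}$, and the only bulk survivor is the gravity piece $-r^{-3}\int_{B_r}x_2\chi_{\{u\neq 0\}}\,dx$, matching the claim. The hardest part of the argument will be the rigorous implementation of the limit $\zeta_\varepsilon \to \chi_{[0,r]}$ inside the inner-variation identity: because the free boundary may be very irregular near the stagnation point (this is exactly the object of study), I must justify that for a.e.\ $r$ the slices $\partial B_r \cap \{u \gtrless 0\}$ are well-behaved and that the traces of $u$ and $\nabla u$ on $\partial B_r$ are controlled enough for the coarea argument and the passage to the limit to be valid. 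This is the reason for the ``a.e.\ $r\in(0,R/2)$'' qualifier in the statement, and it is handled by standard slicing arguments once the Sobolev regularity of the variational solution is invoked.
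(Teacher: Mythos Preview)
Your proposal is correct and follows essentially the same route as the paper: direct differentiation of $M(r)$, the identity $\int_{B_r}|\nabla u|^2 = \int_{\partial B_r} u\,\nabla u\cdot\nu$, and the radial test field $\phi(x)=\eta(|x|)\,x$ in the domain-variation formula, with passage to the limit in the cutoff. The only minor difference is that the paper justifies the Green-type identity via the approximation $\max\{u^\pm-\epsilon,0\}^{1+\epsilon}$ rather than by splitting over $\{u>0\}$ and $\{u<0\}$, which avoids any appeal to free-boundary regularity.
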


\begin{proof}
	Utilizing the identity in dimension 2 that
	$$\frac{d}{dr}\left( r^\alpha \int_{\partial B_r(x^0)} w^2 \right)d\mathcal{S}=(\alpha+1) r^{\alpha-1}\int_{\partial B_r(x^0)}w^2 d\mathcal{S} + r^\alpha\int_{\partial B_r(x^0)} 2w\nabla w\cdot\nu d\mathcal{S},$$
	where $\nu$ denotes the unit outer normal of $\partial B_r(x^0)$, and for any $w\in W^{1,2}(B_r(x^0))$ and any $\alpha\in\mathbb{R}$, we calculate the derivative of $M(r)$ directly,
	\begin{equation} \label{2.7}
	\begin{aligned}
	\frac{d}{dr}M(r) &= r^{-2}\int_{\partial B_r(x^0)} \left( |\nabla u|^2 - x_2\chi_{\{u>0\}} + (-x_2+\lambda^2)\chi_{\{u<0\}} \right)d\mathcal{S} \\
	&\quad - 2r^{-3}\int_{B_r(x^0)}\left( |\nabla u|^2 - x_2\chi_{\{u>0\}} + (-x_2+\lambda^2)\chi_{\{u<0\}} \right)dx \\
	&\quad +2 r^{-4}\int_{\partial B_r(x^0)} u^2 d\mathcal{S} -2r^{-3}\int_{\partial B_r(x^0)} u\nabla u\cdot\nu d\mathcal{S} \\
	&=r^{-2}\int_{\partial B_r(x^0)} \left( |\nabla u|^2 - x_2\chi_{\{u>0\}} + (-x_2+\lambda^2)\chi_{\{u<0\}} \right)d\mathcal{S} \\
	&\quad + 2r^{-3}\int_{B_r(x^0)} x_2\chi_{\{u>0\}} + (x_2-\lambda^2)\chi_{\{u<0\}} dx \\
	&\quad -4r^{-3}\int_{\partial B_r(x^0)} u\nabla u\cdot\nu d\mathcal{S} + 2 r^{-4}\int_{\partial B_r(x^0)} u^2 d\mathcal{S}.
	\end{aligned}
	\end{equation}
	Notice that in the last equality we have used the fact
	$$\int_{B_r(x^0)} |\nabla u|^2 dx = \int_{\partial B_r(x^0)} u\nabla u\cdot\nu d\mathcal{S},$$
	which is approximated by
	$$\int_{B_r(x^0)} \nabla u^\pm \cdot \nabla(\max\{u^\pm - \epsilon,0\}^{1+\epsilon}) dx = \int_{\partial B_r(x^0)} \max\{u^\pm - \epsilon,0\}^{1+\epsilon}\nabla u^\pm\cdot\nu d\mathcal{S}$$
	while $\epsilon\rightarrow0$.
	
	Now for small $\kappa$ and $\eta_\kappa(t;r):=\max\{0,\min\{1,\frac{r-t}{\kappa}\}\}$, we take after approximation $\bm \phi_\kappa(x)=\eta_\kappa(|x-x^0|;r)(x-x^0)\in W_0^{1,2}(B_r(x^0);\mathbb{R}^2)$ as a test function in the definition of the variational solution $u$. We obtain
	\begin{equation*}
	\begin{aligned}
	0 &= \int_{D} \left( |\nabla u|^2 - x_2\chi_{\{u>0\}} +(-x_2+\lambda^2)\chi_{\{u<0\}} \right) \left( 2\eta_\kappa(|x-x^0|;r)+\eta_\kappa'(|x-x^0|;r)|x-x^0| \right)dx \\
	& \quad -2\int_{D} \left( |\nabla u|^2 \eta_\kappa(|x-x^0|;r) + \left( \nabla u \cdot \frac{x-x^0}{|x-x^0|} \right)^2 \eta_\kappa'(|x-x^0|;r)|x-x^0| \right)dx \\
	& \quad - \int_{D} x_2\eta_\kappa(|x-x^0|;r) \chi_{\{u\neq0\}} dx.
	\end{aligned}
	\end{equation*}
	Passing the limit $\kappa\rightarrow0$, we have $\eta_\kappa\rightarrow \chi_{B_r(x^0)}$ pointwise and it follows from the fact that $\int_{D} \eta_\kappa'(|x-x^0|;r)f(x)dx \rightarrow \int_{\partial B_r(x^0)} -f(x)d\mathcal{S}$ for any $f(x)\in L^2(D)$, one gets
	\begin{equation*}
	\begin{aligned}
	0&=2\int_{B_r(x^0)}\left( |\nabla u|^2 - x_2\chi_{\{u>0\}} + (-x_2+\lambda^2)\chi_{\{u<0\}} \right)dx \\
	& \quad -r\int_{\partial B_r(x^0)} \left( |\nabla u|^2 - x_2\chi_{\{u>0\}} + (-x_2+\lambda^2)\chi_{\{u<0\}} \right)d\mathcal{S} \\
	& \quad +2r\int_{\partial B_r(x^0)} (\nabla u\cdot\nu)^2 d\mathcal{S} -2\int_{B_r(x^0)}|\nabla u|^2 dx -\int_{B_r(x^0)}  x_2\left( \chi_{\{u>0\}} + \chi_{\{u<0\}} \right) dx \\
	&= 2r\int_{\partial B_r(x^0)}(\nabla u\cdot\nu)^2 d\mathcal{S} -r\int_{\partial B_r(x^0)} \left( |\nabla u|^2 - x_2\chi_{\{u>0\}} + (-x_2+\lambda^2)\chi_{\{u<0\}} \right)d\mathcal{S} \\
	& \quad -\int_{B_r(x^0)} 3x_2\chi_{\{u>0\}} dx + \int_{B_r(x^0)} (-3 x_2+2\lambda^2)\chi_{\{u<0\}} dx.
	\end{aligned}
	\end{equation*}
	Plugging this into (\ref{2.7}), we obtain that for a.e. $r\in(0,R/2)$,
	\begin{equation*}
	\begin{aligned}
	\frac{d}{dr}M(r)&=\frac{2}{r^2}\int_{\partial B_r(x^0)} (\nabla u\cdot\nu)^2 d\mathcal{S} -\frac4{r^3}\int_{\partial B_r(x^0)} u\nabla u\cdot\nu d\mathcal{S} +\frac2{r^4}\int_{\partial B_r(x^0)} u^2 d\mathcal{S} \\
	&\quad - r^{-3}\int_{B_r(x^0)} x_2\left( \chi_{\{u>0\}} + \chi_{\{u<0\}} \right) dx \\
	&= 2r^{-2}\int_{\partial B_r(x^0)} \left( \nabla u\cdot\nu -\frac{u}r \right)^2 d\mathcal{S} - r^{-3}\int_{B_r(x^0)} x_2\left( \chi_{\{u>0\}} + \chi_{\{u<0\}} \right) dx.
	\end{aligned}
	\end{equation*}
\end{proof}

\subsubsection{Blow-up analysis and regularity for $\partial\{u<0\}$}

We will carry on a blow-up process around the stagnation point $x^0=(x_1^0,0)\in S^u_+$, reducing the problem to the analysis of blow-up limits. Consider the blow-up sequence
\begin{equation} \label{Uk}
U_k=\frac{u(x^0+r_k x)}{r_k}
\end{equation}
for $r_k\rightarrow0+$ and $x\in B_{R/r_k}(0)$ such that $x^0+r_k x\in B_R(x^0)\Subset D$. Lemma \ref{lem1} in the following gives the unique form of the blow-up limit $U_0:=\lim_{r_k\rightarrow0+} U_k$, namely the half-plane function $U_0=-\gamma_0(x\cdot\nu_0)^-$ for some unit vector $\nu_0$, where $\gamma_0$ and $\nu_0$ depend only on $x^0$ but not determined uniquely yet. In fact, the uniqueness of the constant $\gamma_0$ and the direction $\nu_0$ will be deduced utilizing Proposition \ref{ep}, which are crucial to verify the $C^{1,\alpha}$ regularity of $\partial\{u<0\}$.

\begin{lem} \label{lem1}
	Let $u$ be a variational solution of (\ref{eq1.1}) in $D$ with $\lambda>0$ satisfying Assumption \ref{assume}, and $x^0\in S^u_+$. Let $U_k$ be a blow-up sequence of $u$ at $x^0$, defined as in (\ref{Uk}), that converges weakly in $W_{\rm{loc}}^{1,2}(\mathbb{R}^2)$ to a blow-up limit $U_0$. Then $U_k$ converges strongly to $U_0$ in $W_{\rm{loc}}^{1,2}(\mathbb{R}^2)$, and $U_0$ is a homogeneous function of degree 1. Moreover, $U_0=-\gamma_0(x\cdot\nu_0)^-$ for some $\gamma_0\geq\lambda$ and $\nu_0\in\partial B_1$.
\end{lem}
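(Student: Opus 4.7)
The plan is to establish strong convergence, homogeneity, and the half-plane structure of $U_0$ via the Weiss-type functional $M$ from Proposition \ref{mono1}, exploiting that the linear scaling of $u^{-}$ dominates the $3/2$-scaling of $u^{+}$ in the limit. The Bernstein estimate $|\nabla u^{+}|^2\le Cx_2^{-}$ together with the local Lipschitz bound $|\nabla u^{-}|\le C$ (inherited from harmonicity in $\{u<0\}$ and the transition conditions) give $|\nabla u|\le C$ in a neighborhood of $x^0$, hence $|U_k(x)|\le C|x|$ uniformly in $k$, and in fact $U_k\to U_0$ locally uniformly. Because $x^0_2=0$ forces $|x_2|\le r$ on $B_r(x^0)$, the corrector in Proposition \ref{mono1} is bounded by $|r^{-3}\int_{B_r(x^0)}x_2\chi_{\{u\neq0\}}\,dx|\le\pi$, so $r\mapsto M(r)+\pi r$ is non-decreasing and $M(0+):=\lim_{r\to0+}M(r)$ is finite.

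For homogeneity, I would integrate Proposition \ref{mono1} between $r_k\rho_1$ and $r_k\rho_2$ and change variables $y=x^0+r_kx$, $r=r_ks$, which yields
\begin{equation*}
M(r_k\rho_2)-M(r_k\rho_1)=\int_{\rho_1}^{\rho_2}\frac{2}{s^{2}}\int_{\partial B_s(0)}\Big(\nabla U_k\cdot\nu-\frac{U_k}{s}\Big)^{2}d\mathcal{S}\,ds-r_k\int_{\rho_1}^{\rho_2}s^{-3}\int_{B_s}x_2\chi_{\{U_k\neq0\}}\,dx\,ds.
\end{equation*}
The left-hand side tends to $M(0+)-M(0+)=0$ and the last integral is $O(r_k)$; rewriting the first term in polar form as $\int_{B_{\rho_2}\setminus B_{\rho_1}}(x\cdot\nabla U_k-U_k)^{2}/|x|^{4}\,dx$ (a convex, hence weakly lower semicontinuous functional of $U_k$) forces $x\cdot\nabla U_0=U_0$ a.e., so $U_0$ is $1$-homogeneous. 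The upgrade from weak to strong $W^{1,2}_{\mathrm{loc}}$ convergence then follows along standard Weiss lines: the scaled identity for $M(r_k\rho)$ converges to the explicit, $\rho$-independent value $M_{U_0}(\rho)$ dictated by homogeneity; since the boundary $L^{2}$-term converges by trace compactness, the $r_k$-prefactored term vanishes, and $|\{U_k<0\}\cap B_\rho|$ is controlled via the free boundary condition $|\nabla U_k^{-}|\ge\lambda$, this pins down $\lim_k\int_{B_\rho}|\nabla U_k|^{2}=\int_{B_\rho}|\nabla U_0|^{2}$.

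Finally I would identify $U_0$. Since $u^{+}(x^0+r_kx)/r_k\le Cr_k^{1/2}|x|^{3/2}\to 0$, we have $U_0=-\lim r_k^{-1}u^{-}(x^0+r_k\cdot)\le 0$, and $U_0$ is harmonic in $\{U_0<0\}$ by passing $\Delta u=0$ in $\{u\neq0\}$ to the limit. Writing $U_0=\rho\,h(\theta)$ on any connected component of $\{U_0<0\}$, the ODE $h''+h=0$ with $h\le 0$ and zero Dirichlet data on the bounding rays forces the sector to have angular opening exactly $\pi$ and $h(\theta)=-\gamma\sin(\theta-a)$; since two open half-planes in $\mathbb{R}^{2}$ cannot be disjoint, $\{U_0<0\}$ has at most one component, yielding $U_0=-\gamma_0(x\cdot\nu_0)^{-}$. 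The admissibility condition $|\nabla u^{-}|^{2}\ge-x_2+\lambda^{2}$ on $\partial\{u<0\}$ from (\ref{equation2}) passes to the limit at $x_2=0$ to give $\gamma_0^{2}\ge\lambda^{2}$, i.e.\ $\gamma_0\ge\lambda$. The principal obstacle is the asymmetric scaling between the two phases: one must ensure that the $u^{+}$-contribution to $M$ does not contaminate the identification of the linear blow-up of $u^{-}$, and that the bounded—but not small—gravity corrector $r^{-3}\int_{B_r}x_2\chi_{\{u\neq0\}}\,dx$ nonetheless integrates to $O(r)$ and drops out in the limit.
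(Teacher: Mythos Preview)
Your approach largely parallels the paper's, but there is a genuine gap in the classification of $U_0$. The claim that ``two open half-planes in $\mathbb{R}^{2}$ cannot be disjoint'' is false: the opposite half-planes $\{x\cdot\nu_0>0\}$ and $\{x\cdot\nu_0<0\}$ through the origin are disjoint, and together they yield the $1$-homogeneous profile $U_0=-\gamma_0|x\cdot\nu_0|$, which your ODE argument does not rule out. You also do not exclude the possibility $U_0\equiv 0$. The paper handles both cases by separate, nontrivial arguments (see Remark~\ref{rem1}): the case $U_0\equiv 0$ would force $\chi_{\{U_k<0\}}\to 0$ in $L^1_{\mathrm{loc}}$, which is excluded via the non-degeneracy of $u^{-}$ coming from $|\nabla u^{-}|^2\ge -x_2+\lambda^2$ (cf.\ Lemma~4.3 in \cite{VW11}); the case $U_0=-\gamma_0|x\cdot\nu_0|$ would leave no room for the positive phase and force $\chi_{\{u_k^{+}>0\}}\to 0$ in $L^1_{\mathrm{loc}}$, which contradicts Proposition~\ref{prop1}. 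Neither exclusion follows from the eigenvalue problem on $\partial B_1$ alone.

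A secondary remark: your route to strong $W^{1,2}_{\mathrm{loc}}$ convergence through the scaled $M$-identity is more elaborate than needed, and the step ``$|\{U_k<0\}\cap B_\rho|$ is controlled via $|\nabla U_k^{-}|\ge\lambda$'' is vague---to isolate $\int_{B_\rho}|\nabla U_k|^2$ from the term $\lambda^{2}\int_{B_\rho}\chi_{\{U_k<0\}}$ you need convergence of the latter, not merely a bound. The paper bypasses this entirely with the identity $\int|\nabla U_k|^{2}\eta=-\int U_k\,\nabla U_k\cdot\nabla\eta$ (valid since $u$ is harmonic in $\{u\neq 0\}$ and vanishes on the free boundary), whose right-hand side converges directly by the weak-$W^{1,2}$/strong-$L^{2}$ pairing, giving $\limsup_k\int|\nabla U_k|^{2}\eta\le\int|\nabla U_0|^{2}\eta$ without any reference to the characteristic functions.
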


\begin{proof}
	The strong convergence from $U_k$ to $U_0$ in $W_{\rm{loc}}^{1,2}(\mathbb{R}^2)$ comes straightly from the fact that
	$$\limsup_{r_k\rightarrow0+}\int_{B_{R/r_k}(0)}|\nabla U_k|^2\eta dx \leq \int_{\mathbb{R}^2}|\nabla U_0|^2\eta dx$$
	for any $\eta\in C_0^1(\mathbb{R}^2)$. In fact,
	$$\int_{B_{R/r_k}(0)}|\nabla U_k|^2\eta dx = \int_{B_{R/r_k}(0)} -U_k \nabla U_k\cdot\nabla\eta dx,$$
	and the right-hand side converges to
	$$\int_{\mathbb{R}^2} -U_0\nabla U_0\cdot\nabla\eta dx = \int_{\mathbb{R}^2}|\nabla U_0|^2\eta dx.$$
	Moreover, the harmonicity of $U_k$ in $\{U_0\neq0\}$ implies that $U_0$ is also harmonic in $\{U_0\neq0\}$. On the other hand, the monotonicity formula in Proposition \ref{mono1} gives that for all $0<\rho<\sigma<\infty$ and $r_k>0$,
	\begin{equation*}
	\begin{aligned}
	M(r_k\sigma)-M(r_k\rho)&=\int_{r_k\rho}^{r_k\sigma} \left[ 2 r^{-2}\int_{\partial B_r(x^0)}\left( \nabla u\cdot\nu-\frac{u}{r}  \right)^2d\mathcal{S} -r^{-3}\int_{B_r(x^0)}x_2\left( \chi_{\{u>0\}}+\chi_{\{u<0\}} \right)dx \right]dr \\
	&=2\int_{\rho}^{\sigma} r^{-4} \int_{\partial B_r}\left( \nabla U_k\cdot x - U_k \right)^2 d\mathcal{S} dr - 2\int_{r_k\rho}^{r_k\sigma} r^{-3}\int_{B_r(x^0)} x_2\chi_{\{u\neq0\}} dx dr \\
	&=2\int_{B_\sigma(0)\backslash B_\rho(0)} |x|^{-4} \left( \nabla U_k\cdot x - U_k \right)^2 dx + o(1),
	\end{aligned}
	\end{equation*}
	where $o(1)$ is the infinitesimal as $r_k\rightarrow0+$ since $|x_2|\leq r$ and $\chi_{\{u\neq0\}}\leq1$ in $B_r(x^0)$. The left-hand side of the above equation converges to $0$ as $r_k\rightarrow0+$, which yields the desired homogeneity of $U_0$.
	
	Now we can write in polar coordinates that $U_0(x)=U_0(\rho,\theta)=\rho U(\theta)$ with $x_1=\rho\cos\theta$, $x_2=\rho\sin\theta$. In particular, $U(\theta)$ is an eigenfunction of the spherical Laplacian $\Delta_{\mathbb{S}}=\partial_{\theta\theta}$ on the spherical sets $\{u\neq0\}\cap\partial B_1$, corresponding to the eigenvalue $1$ on the unit sphere $\partial B_1$ centered at $0$ in $\mathbb{R}^2$. In other words,
	$$-\Delta_{\mathbb{S}} U = U \quad \text{in} \quad \{U(\theta)\neq0\}\cap\partial B_1.$$
	Recalling the assumption $|\nabla u^+(x)|^2\leq C x_2^-$ locally in $D$, we have that $u^+(y)\leq C \sqrt{y_2^-} r$ in a neighborhood $B_r(x^0)$ of $x^0=(x_1^0,0)$, which leads to the fact that $U_k(x)\leq C\sqrt{r_k x_2^-}$ in $B_1$ and $U_0^+(x)\equiv0$ in $B_1$. Hence, $U_0^+(x)\equiv0$ in $\mathbb{R}^2$ by the homogeneity of $U_0$, which implies that $U(\theta)\leq0$.
	
	Since the 1-eigenspace in dimension 2 contains only linear functions, one easily deduce that there exists some $\nu_0\in\partial B_1$, such that $U_0$ is of the form
	$$U_0(x)= - \gamma_0(x\cdot\nu_0)^- \quad \text{with} \quad \gamma_0>0,$$
	or
	$$U_0(x)\equiv0,$$
	or
	$$U_0(x)=-\gamma_0|x\cdot\nu_0| \quad \text{with} \quad \gamma_0>0.$$
	The last two cases can be excluded using Lemma 4.3 in \cite{VW11} and Proposition \ref{prop1} in the subsequent context, see Remark \ref{rem1} below.
	
	Moreover, utilizing the fact that $U_k\rightarrow U_0$ strongly in $W_{\rm{loc}}^{1,2}(\mathbb{R}^2)$ and $\chi_{\{U_k<0\}}\rightarrow \chi_{\{U_0<0\}}$ strongly in $L_{\rm{loc}}^1(\mathbb{R}^2)$, the additional free boundary condition that $|\nabla u^-|^2\geq -x_2+\lambda^2$ on $\partial\{u<0\}$ gives that
	$$|\nabla U_0^-(x)|\geq\lambda \quad \text{on} \quad \partial\{U_0<0\}.$$
	This together with the fact that $U_0^+\equiv0$ gives that
	$$U_0(x)=-\gamma_0(x\cdot\nu_0)^- \quad \text{for some $\nu_0\in \partial B_1$}$$
	with $\gamma_0\geq\lambda>0$.
\end{proof}

\begin{rem} \label{rem1}
	Lemma \ref{lem1} is compatible with the result by De Silva and Savin in \cite{SS19} that the blow-up should be either a two plane solution $U_0(x)=\beta_0(x\cdot\nu_0)^+ - \gamma_0(x\cdot\nu_0)^-$ with unique $\beta_0, \gamma_0>0$ and $\nu_0\in\partial B_1$ (which should be the blow-up limit at the non-degenerate free boundary point $y^0$ where $|\nabla u^\pm(y^0)|\neq0$), or $U_0^+\equiv0$ (which is exactly the blow-up limit at the stagnation point $x^0$). We exclude the cases $U_0\equiv0$ and $U_0 = -\gamma_0|x\cdot\nu_0|$ for $\gamma_0>0$ with the help of Lemma 4.3 in \cite{VW11} and Proposition \ref{prop1} in this paper. In fact, these two cases imply that either $\chi_{\{U_k<0\}}\rightarrow0$ in $L_{\rm{loc}}^1(\mathbb{R}^2)$, which can be excluded by way of the method in \cite{VW11}, or that $\chi_{\{u_k^+>0\}}\rightarrow0$ in $L_{\rm{loc}}^1(\mathbb{R}^2)$, were $u_k^+:=\frac{u(x^0+r_kx)}{r_k^{3/2}}$. However, the proof of Proposition \ref{prop1}, which is independent of the property of $u^-$, shows that the limit $\chi_{\{u>0\}}\rightarrow0$ in $L^1_{\rm{loc}}(\mathbb{R}^2)$ is invalid.
\end{rem}

With the explicit form of blow-ups for $u^-$, we can follow the steps in \cite{PSV21} to study the $\epsilon$-regularity for $u^-$, namely the flatness decay for $\partial\{u<0\}$. De Philippis, Spolaor and Velichkov in \cite{PSV21} reduced the following Proposition \ref{ep} to two key ingredients, partial boundary Harnack inequality and analysis of the linearized problem, giving a groundbreaking proof of the regularity of the two-phase free boundaries in any dimensions. The method is found applicable to several kinds of problems, such as in \cite{MTV23} for solutions of a free boundary system arising in shape optimization problems. It can also be employed in our case for $u^-$ since $|\nabla u^-(x^0)|\geq\lambda>0$.

\begin{prop}($\epsilon$-regularity for $u^-$) \label{ep}
	Let $u$ be a variational solution of (\ref{eq1.1}) in $D$ with $\lambda>0$ and $x^0\in S^u_+$. Let $\gamma\geq\lambda>0$. Then there exist $\epsilon_0>0$ and $C>0$ such that if the blow-up sequence $U_k^-(x)$ satisfies
	$$-\gamma(x\cdot\nu-\epsilon)^-\leq -U_k^-(x) \leq -\gamma(x
	\cdot\nu+\epsilon)^- \quad \text{in} \quad B_1$$
	for every $\epsilon<\epsilon_0$, then there are a constant $\tilde{\gamma}>0$ and a unit vector $\tilde{\nu}$ with $|\gamma - \tilde{\gamma}| + |\nu - \tilde{\nu}|\leq C\epsilon$ and a radius $\rho\in(0,1)$ such that
	$$-\tilde{\gamma}(x\cdot\tilde{\nu}-\epsilon/2)^-\leq -(U_k)_\rho^-(x) \leq -\tilde{\gamma}(x\cdot\tilde{\nu}+\epsilon/2)^- \quad \text{in} \quad B_1,$$
	where $(U_k)_\rho^-:=\frac{U_k^-(\rho x)}{\rho}=\frac{u^-(x^0+r_k\rho x)}{r_k\rho}$.
\end{prop}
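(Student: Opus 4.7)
The plan is to adapt the flatness-improvement scheme of De Philippis--Spolaor--Velichkov to the present setting by a contradiction argument combined with linearization. The crucial structural observation is that since $|\nabla u^-(x^0)|^2 \geq \lambda^2 > 0$ while the Bernstein assumption forces $u^+$ to decay like $r^{3/2}$ near $x^0$, at the linear blow-up scale $U_k^- = u^-(x^0+r_k\,\cdot)/r_k$ the positive phase $U_k^+$ converges uniformly to zero on compact sets. Consequently the transition condition $|\nabla u^+|^2 - |\nabla u^-|^2 = -\lambda^2$ degenerates in the limit to a one-phase Bernoulli condition $|\nabla U_0^-|^2 = \gamma_0^2$ with $\gamma_0 \geq \lambda$, and the gravity perturbation $-x_2 + \lambda^2$ becomes a lower-order correction of order $r_k$. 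Hence $V_k := U_k^-$ is, modulo higher-order perturbations, a one-phase Bernoulli solution with gradient bounded uniformly from below, and the two-pillar framework of \cite{PSV21} applies.

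Argue by contradiction: assume there exist $\epsilon_j \downarrow 0$, $\gamma_j \geq \lambda$, unit vectors $\nu_j$, and blow-up functions $V_j := U_{k_j}^-$ trapped between $-\gamma_j(x\cdot\nu_j \mp \epsilon_j)^-$ in $B_1$, for which no admissible $(\tilde\gamma,\tilde\nu,\rho)$ produces the $(\epsilon_j/2)$-flat conclusion. Introduce the normalized flatness excess
\begin{equation*}
w_j(x) := \frac{-V_j(x)/\gamma_j - (x\cdot\nu_j)^-}{\epsilon_j},
\end{equation*}
which is uniformly bounded in $B_1$. The \emph{first pillar}, a partial boundary Harnack inequality, must be established in the non-degenerate region where $V_j/\gamma_j$ is comparable to $-x\cdot\nu_j$; it yields uniform H\"older continuity of $w_j$ in $B_{1-\delta} \cap \{x\cdot\nu_j < 0\}$ up to the free boundary. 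Extracting a subsequence, $\nu_j \to \nu_\infty$ and $w_j \to w_\infty$ locally uniformly on $\{x\cdot\nu_\infty < 0\}\cap B_1$, with $w_\infty$ continuous up to the flat side.

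The \emph{second pillar} identifies $w_\infty$ as the weak solution of the linearized problem: harmonicity $\Delta w_\infty = 0$ in $\{x\cdot\nu_\infty < 0\} \cap B_1$ together with the Neumann-type condition $\partial_{\nu_\infty} w_\infty = 0$ on $\{x\cdot\nu_\infty = 0\} \cap B_1$, obtained by expanding the Bernoulli condition $|\nabla V_j|^2 = \gamma_j^2 + O(r_{k_j})$ to first order around the half-plane solution $-\gamma_j(x\cdot\nu_j)^-$. Standard boundary regularity for harmonic functions with Neumann data then produces, for any $\rho \in (0,1)$ and some universal constant $C_*$, a linear profile $a + b\cdot x$ with $b \cdot \nu_\infty = 0$ such that $|w_\infty - a - b\cdot x| \leq C_* \rho^2$ in $B_\rho \cap \{x\cdot\nu_\infty < 0\}$. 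Translating back via $\tilde\gamma_j := \gamma_j(1 + \epsilon_j a)$ and $\tilde\nu_j := (\nu_j + \epsilon_j b)/|\nu_j + \epsilon_j b|$, one checks by a direct geometric expansion that $(V_j)_\rho$ satisfies the improved flatness bound with excess $\epsilon_j/2$ once $\rho$ is fixed so that $C_* \rho \leq 1/4$ and $j$ is large, contradicting the failure assumption. The required control $|\gamma - \tilde\gamma| + |\nu - \tilde\nu| \leq C\epsilon$ is automatic from the construction.

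The main obstacle I anticipate is the rigorous derivation of the partial boundary Harnack inequality in the presence of the gravity term $-x_2+\lambda^2$ and the nontrivial positive phase: the classical proof for constant-coefficient one-phase Bernoulli problems must be adapted to absorb both perturbations. The key is a careful scale-matching argument showing that, for the contradiction sequence, one can always choose the blow-up radius $r_{k_j}$ so small that both the gravity oscillation and the positive-phase contribution are of strictly higher order than $\epsilon_j$; this reduces the problem to an essentially one-phase partial Harnack estimate, which is precisely the content that \cite{PSV21} handles in the two-phase Alt--Caffarelli--Friedman context. Once this step is secured, the linearization and the final geometric expansion are routine.
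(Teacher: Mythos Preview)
Your proposal is correct and follows essentially the same approach as the paper: the paper likewise invokes the De Silva / De Philippis--Spolaor--Velichkov two-pillar scheme, introducing the linearizing sequence $w_k = (U_k^- - \gamma_0(x\cdot\nu_0)^-)/(\gamma_0\epsilon_k)$, using a partial boundary Harnack inequality to secure H\"older compactness, and identifying the limit $w_\infty$ as a harmonic function with Neumann data on the flat side. The paper in fact gives only a sketch and omits the details, so your more explicit contradiction set-up, the identification of the perturbations from gravity and from $u^+$ as higher order, and the construction of $(\tilde\gamma,\tilde\nu)$ are all consistent with---and slightly more detailed than---what the paper records.
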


We will give a sketch of the strategy to show the flatness decay, and omit the detailed proof here.

We use the general method of De Silva developed in \cite{S11}, which reduces the proof of flatness decay to two key ingredients: partial boundary Harnack lemma and the analysis of linearized problem. Roughly speaking, we consider a blow-up sequence $\{u_k\}$ that is $\epsilon_k$-close to $-\gamma_0(x\cdot\nu_0)^-$. Then let
$$w_k:=\frac{U_k^- - \gamma_0(x\cdot\nu_0)^-}{\gamma_0\epsilon_k}$$
be the linearizing sequence.

The first step is to show that $w_k$ converges to some H\"older function $w_\infty$, which is done by the partial boundary Harnack lemma for $u^-$ as in \cite{PSV21}. It says that if $u$ is $\epsilon$-flat to $-\gamma_0(x\cdot\nu_0)^-$ in $B_1$ for $\gamma_0>0$ and $\nu_0\in \partial B_1$, then in a smaller scale $B_{1/2}$ it is improved to $(1-c)\epsilon$-close for some $0<c<1$. Namely, if
$$-\gamma_0(x\cdot\nu_0-\epsilon)^-\leq u(x) \leq -\gamma_0(x\cdot\nu_0+\epsilon)^- \quad \text{in} \quad B_1,$$
then
$$-\gamma_0(x\cdot\nu_0-(1-c)\epsilon)^-\leq u(x) \leq -\gamma_0(x\cdot\nu_0+(1-c)\epsilon)^- \quad \text{in} \quad B_{1/2}.$$

The second step is to show that $w_\infty$ solves a PDE problem, i.e. the linearized problem:
\begin{equation*}
\begin{cases}
\Delta w_\infty = 0 \qquad\quad\quad \text{in} \quad B_1\cap\{x\cdot\nu_0<0\}, \\
\nabla w_\infty\cdot\nu_0=0 \qquad\ \text{on} \quad B_1\cap\{x\cdot\nu_0=0\}.
\end{cases}
\end{equation*}
The regularity of its solution $w_\infty$ implies the flatness decay of $\partial\{u_k<0\}$.

\begin{rem}
	We can follow the argument in \cite{PSV21}, Lemma 2.5 to check that $u$ is a viscosity solution of (\ref{eq1.1}), which is necessary in obtaining the partial boundary Harnack inequality and the linearized problem as in \cite{PSV21} and \cite{SFS14}.
\end{rem}

With the help of the flatness decay, it is straightforward to deduce the regularity of the partial free boundary $\partial\{u<0\}$. Additionally, Lemma 4.2 in \cite{PSV21} gives that $\gamma_0=\lambda$. Hence, we will obtain the following regular profile to the free boundary $\partial\{u<0\}$.

\begin{prop} \label{reg}
	Let $u$ be a variational solution of (\ref{eq1.1}) in $D$ with $\lambda>0$ and $x^0\in S^u_+$. Then $\gamma_0=\lambda_0$ and $\nu_0$ is uniquely determined by $x^0$. Furthermore, the free boundary $\partial\{u<0\}$ is a $C^{1,\alpha}$ curve in a small neighborhood of $x^0$ for some $\alpha\in(0,1)$.
\end{prop}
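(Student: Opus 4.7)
The plan is to combine Lemma~\ref{lem1}, the $\epsilon$-regularity Proposition~\ref{ep}, and a constant-identification argument from \cite{PSV21} to pin down both the blow-up at $x^0$ and the regularity of the free boundary in a neighborhood. By Lemma~\ref{lem1}, every subsequential blow-up limit $U_0$ of $U_k(x)=u(x^0+r_k x)/r_k$ is of the form $U_0(x)=-\gamma_0(x\cdot\nu_0)^-$ with $\gamma_0\geq\lambda$ and $\nu_0\in\partial B_1$. To fix $\gamma_0$, I would combine the lower bound $|\nabla u^-|^2\geq -x_2+\lambda^2$ on $\partial\{u<0\}$ (which, after passing to the blow-up, gives $\gamma_0\geq\lambda$) with the matching upper bound coming from the viscosity-solution property of $u$; the scaled version of this upper bound yields $\gamma_0\leq\lambda$ exactly as in Lemma~4.2 of \cite{PSV21}. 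Hence $\gamma_0=\lambda$, independently of the chosen subsequence.

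With $\gamma_0=\lambda$ fixed, I would run the standard Campanato-type iteration driven by Proposition~\ref{ep}. For $k$ large enough, Lemma~\ref{lem1} forces $U_k^-$ to be $\epsilon_0$-flat around $-\lambda(x\cdot\nu_0)^-$ in $B_1$; Proposition~\ref{ep} then outputs $\tilde\nu_1$ with $|\tilde\nu_1-\nu_0|\leq C\epsilon_0$ and flatness $\epsilon_0/2$ at scale $\rho$. Iterating produces a sequence of unit vectors $\{\nu_n\}$ with $|\nu_n-\nu_{n+1}|\leq C\, 2^{-n}\epsilon_0$, hence a unique limit $\nu_\infty$ with $|\nu_n-\nu_\infty|\leq C\rho^{\,n\alpha}$ for some $\alpha\in(0,1)$ depending only on $\rho$. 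Any subsequential blow-up limit must have direction $\nu_\infty$, which shows that $\nu_0$ is uniquely determined by $x^0$.

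To upgrade pointwise uniqueness at $x^0$ to $C^{1,\alpha}$ regularity of $\partial\{u<0\}$ in a neighborhood, I would exploit the fact that the flatness assumption of Proposition~\ref{ep} is open in the base point: at every free-boundary point $y\in\partial\{u<0\}$ sufficiently close to $x^0$, the gradient $|\nabla u^-(y)|$ is uniformly bounded below (since $\gamma_0=\lambda>0$ and this constant does not depend on the base point), so the same $\epsilon_0$-flatness, with a possibly doubled constant, holds at $y$. Running the iteration at every such $y$ produces a family of unit normals $\nu(y)$ with uniform dyadic decay, i.e.\ a $C^{0,\alpha}$ estimate on $\nu(\cdot)$. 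Parametrizing $\partial\{u<0\}\cap B_r(x^0)$ as a graph over the hyperplane orthogonal to $\nu_0$, this translates into $C^{1,\alpha}$ regularity of the graph.

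The main obstacle I anticipate is the uniform transfer from $x^0$ to nearby base points: $x^0$ is exceptional because $|\nabla u^+(x^0)|=0$, while at nearby points of $\partial\{u<0\}$ the two phases may interact. The framework of \cite{S11, PSV21} is designed to accommodate this, but one must check that the partial boundary Harnack lemma and the associated linearized Neumann problem still function when the right-hand side is $-x_2+\lambda^2$ rather than a constant. Near $x^0=(x_1^0,0)$, however, this weight is $\lambda^2+O(r)$, and the $O(r)$ perturbation is absorbable into the iteration scheme without deteriorating the exponent $\alpha$; this is why the argument of \cite{PSV21} applies essentially verbatim to yield the claim.
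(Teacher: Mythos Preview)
Your proposal is correct and follows essentially the same route as the paper: the paper likewise invokes Lemma~\ref{lem1} for the half-plane form of the blow-up, cites Lemma~4.2 of \cite{PSV21} to pin down $\gamma_0=\lambda$, and then says the $C^{1,\alpha}$ regularity of $\partial\{u<0\}$ is ``straightforward'' from the flatness decay of Proposition~\ref{ep}. Your Campanato-type iteration and the remark on absorbing the $O(r)$ perturbation of the weight $-x_2+\lambda^2$ make explicit what the paper leaves implicit, but there is no real methodological difference.
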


\subsubsection{Monotonicity formula for the positive phase}

In this subsection, we will deal with the asymptotic behavior of the positive phase $u^+$. Since the gradient of $u^+$ vanished at the stagnation point $x^0=(x_1^0,0)$, the blow-up limit possesses the singular behavior near the stagnation point $x^0$. The approach is quite different from the regular case for $u^-$, and we aim to make a blow-up analysis to study the singularity for $u^+$ at $x^0$, which relies on a monotonicity formula involving only $u^+$. The strategy borrows from the innovative ideas in \cite{VW11} to deal with the singular profile to one-phase free boundary problem.

Before delving into it, we give a key observation that once $u$ is fixed, then $u^+$ locally solves the problem
\begin{equation} \label{equation+}
\begin{cases}
\Delta V = 0 \qquad\quad\ \text{in} \quad D\cap\{u>0\}, \\
|\nabla V|^2 = -x_2 \quad \text{on} \quad D\cap\Gamma_{\rm op}^+
\end{cases}
\end{equation}
for fixed domain $D\cap\{u>0\}$, which helps extracting $u^+$ from $u$ in monotonicity formula. Noting that there is a good observation that the relationship between the variational solution to the two-phase problem and to the one-phase problem is as follows.

\begin{prop} \label{prop0}
	If u is a variational solution of (\ref{eq1.1}) in $D$, then $u^+$ is a (local) variational solution of (\ref{equation+}). That is, $u^+$ satisfies
	\begin{equation} \label{va1}
	\int_{D\cap\{u\geq0\}} \left( |\nabla u^+|^2 div\bm\phi - 2\nabla u^+ D\bm\phi \nabla u^+ -(\phi_2 + x_2 div\bm\phi )\chi_{\{u>0\}} \right)dx =0
	\end{equation}
	for any $\bm\phi=(\phi_1,\phi_2)\in W_0^{1,2}(D\cap\{u\geq0\};\mathbb{R}^2)$. Furthermore, $u^+$ satisfies the additional free boundary condition $|\nabla u^+|^2 \geq -x_2$ on $\partial\{u>0\}$.
\end{prop}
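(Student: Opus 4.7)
The plan is to deduce (\ref{va1}) as a direct restriction of the full domain-variation identity that $u$ satisfies as a variational solution of (\ref{eq1.1}). The starting point, which will be recorded explicitly in Appendix B, is the identity
\begin{align*}
0&=\int_D\bigl[|\nabla u|^2\,\mathrm{div}\,\bm\phi-2\nabla u\cdot D\bm\phi\,\nabla u\bigr]dx\\
&\quad-\int_D(\phi_2+x_2\,\mathrm{div}\,\bm\phi)\chi_{\{u>0\}}dx-\int_D\bigl[\phi_2+(x_2-\lambda^2)\mathrm{div}\,\bm\phi\bigr]\chi_{\{u<0\}}dx,
\end{align*}
valid for every $\bm\phi\in W^{1,2}_0(D;\mathbb{R}^2)$. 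This is obtained in the standard way from $\tfrac{d}{d\epsilon}\big|_{\epsilon=0}\mathcal{J}_{tp}(u\circ T_\epsilon^{-1};D)=0$ with $T_\epsilon(x)=x+\epsilon\bm\phi(x)$, using $|\det DT_\epsilon|=1+\epsilon\,\mathrm{div}\,\bm\phi+O(\epsilon^2)$, $DT_\epsilon^{-T}=I-\epsilon D\bm\phi^T+O(\epsilon^2)$, and $\nabla\lambda_i=-e_2$ for both $\lambda_1=-x_2$ and $\lambda_2=-x_2+\lambda^2$.

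The next step is a localization to the positivity set. Given $\bm\phi\in W^{1,2}_0(D\cap\{u\geq 0\};\mathbb{R}^2)$, I extend it by zero on $\{u<0\}$; the vanishing trace on $\partial(D\cap\{u\geq 0\})$ ensures that the extension belongs to $W^{1,2}_0(D;\mathbb{R}^2)$, and since $D\bm\phi=0$ a.e.\ on $\{u<0\}$, the $\chi_{\{u<0\}}$ integral drops out altogether when the extension is inserted into the identity above. On $\{u\geq 0\}$ one has $u=u^+$ and $\nabla u=\nabla u^+$ a.e.\ (both sides vanishing on $\{u=0\}$), so plugging in and collecting terms gives exactly (\ref{va1}). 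Since (\ref{va1}) is in turn the domain-variation identity of the one-phase energy $\int(|\nabla V|^2-x_2\,\chi_{\{V>0\}})dx$ on $D\cap\{u\geq 0\}$, it identifies $u^+$ as a (local) variational solution of (\ref{equation+}).

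The supplementary pointwise inequality $|\nabla u^+|^2\geq -x_2$ on $\partial\{u>0\}$ does not require a new calculation: it is part of the additional free-boundary conditions (\ref{equation2}) imposed on any solution of (\ref{eq1.1}), whose validity for variational solutions is verified in Appendix A, and it is inherited by $u^+$ verbatim. The main subtlety in the scheme above is the extension-by-zero step, since $\{u\geq 0\}$ may have quite rough boundary with no a priori regularity; this is handled by interpreting $W^{1,2}_0(D\cap\{u\geq 0\};\mathbb{R}^2)$ as the closure of smooth vector fields supported strictly inside $(D\cap\{u\geq 0\})^\circ$, or equivalently by a density argument on test fields whose support stays a positive distance away from $\{u<0\}$, either of which is enough to reach the stated identity.
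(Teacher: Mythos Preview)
Your approach is correct and matches the paper's: both obtain (\ref{va1}) by plugging a test field supported in $D\cap\{u\geq0\}$ into the full domain-variation identity of Definition~\ref{def1}, so that the $\chi_{\{u<0\}}$ contributions vanish. The only difference is cosmetic: for the additional inequality $|\nabla u^+|^2\geq -x_2$ on $\partial\{u>0\}$ you invoke Appendix~A directly, whereas the paper reproduces that same one-sided variation argument (with $\bm\phi\cdot\nu\leq 0$ and $u_\epsilon(y)=u^+(x)-u^-(y)$) inline in the proof of this proposition.
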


\begin{proof}
	We only prove the last part in the Proposition, since (\ref{va1}) is easily verified by taking $\bm\phi\in W_0^{1,2}(D\cap\{u\geq0\};\mathbb{R}^2)$ in Definition \ref{def1} and Definition \ref{def2}. Notice that the integral terms on $\Gamma_{\rm tp}$ vanish because $\bm\phi=0$ on it. Hence we go back to the definition of variational solutions for $u$ to prove the additional free boundary condition $|\nabla u^+|^2 \geq -x_2$ on $\partial\{u>0\}$.
	
	Denote $\nu_t$ the outer normal of $\partial\{u>t\}$ for small $t>0$, and take $\bm\phi_t=(\phi_{1,t},\phi_{2,t})\in W_0^{1,2}(D;\mathbb{R}^2)$ such that $\bm\phi_t\cdot\nu_t\leq0$ for any $t>0$. Let $u_{\epsilon,t}(y):=u^+(x) - u^-(y)$ with $y=(y_1,y_2)=x+\epsilon\bm\phi_t(x)$ for any $\epsilon>0$, which moves the positive part of $u$ inwards. That is, $\{y \ | \ u_{\epsilon,t}(y)>0\}\subset\{y \ | \ u(y)>0\}$. Hence we get
	\begin{equation*}
	\begin{aligned}
	0 &= \frac{d}{d\epsilon}\Big|_{\epsilon=0} \mathcal{J}(u(x+\epsilon\bm\phi_t(x))) \\
	&= \lim_{\epsilon\rightarrow0} \frac{\mathcal{J}(u(x+\epsilon\bm\phi_t(x))) - \mathcal{J}(u(x))}{\epsilon} \\
	&\leq \lim_{\epsilon\rightarrow0} \frac{1}{\epsilon} \Bigg[  \int_{D\cap\{u>0\}} \left( |\nabla u_{\epsilon,t}(y)|^2 -y_2  \right) dx + \int_{D\cap\{u<0\}} \left( |\nabla u|^2 + (-x_2+\lambda^2) \right) dx \\
	& \qquad + \int_{D\cap\{u_{\epsilon,t}<0<u\}} \left( |\nabla u_{\epsilon,t}(y)|^2 + (-y_2+\lambda^2) \right) dy - J(u) \Bigg] \\
	& = \int_{D\cap\{u>0\}} \left( -|\nabla u|^2 + 2\nabla u D\bm\phi_t \nabla u + div(x_2\bm\phi_t) \right) dx \\
	& = -\lim_{t\rightarrow0^+} \int_{D\cap\partial\{u>t\}} \left( |\nabla u^+|^2 + x_2 \right)\bm\phi_t\cdot\nu_t d\mathcal{S}.
	\end{aligned}
	\end{equation*}
	This yields $|\nabla u^+|^2\geq-x_2$ on $\partial\{u>0\}$ in weak sense.
\end{proof}

As in Section 2.1.1, we first define
\begin{equation}
I_+(r)=I_{+,x^0,u}(r)=r^{-3}\int_{B_r(x^0)}\left( |\nabla u^+|^2 + (-x_2)\chi_{\{u>0\}} \right)dx,
\end{equation}
and the $L^2$-boundary energy,
\begin{equation}
J_+(r)=J_{+,x^0,u}(r)=r^{-4}\int_{\partial B_r(x^0)}  (u^+)^2 d\mathcal{S},
\end{equation}
and
\begin{equation}
\begin{aligned}
M_+(r)&=M_{+,x^0,u}(r) \\
&=I_+(r)-\frac32 J_+(r) \\
&=r^{-3}\int_{B_r(x^0)}\left( |\nabla u^+|^2 + (-x_2)\chi_{\{u>0\}} \right)dx-\frac32 r^{-4}\int_{\partial B_r(x^0)}  (u^+)^2 d\mathcal{S}.
\end{aligned}
\end{equation}
This boundary adjusted energy $M_+(r)$ involves no terms of $u^-$, and we will utilize Proposition \ref{prop0} to prove that the derivative of $M_+(r)$ includes the integrand of a scalar multiple of $\left( \nabla u(x)\cdot(x-x^0)-\frac32 u(x) \right)^2$, which implies a different order of blow-up sequence from $u^-$ at $x^0$.

\begin{prop}(Monotonicity formula for $u^+$) \label{mono2}
	Let $u$ be a variational solution of (\ref{eq1.1}) in $D$ with $\lambda>0$ and $x^0\in S^u_+$.
	Then, for a.e. $r\in(0,R/2)$,
	\begin{equation*}
	\frac{d}{dr} M_+(r)=2r^{-3}\int_{\partial B_r(x^0)} \left( \nabla u^+\cdot\nu-\frac32\frac{u^+}r \right)^2 d\mathcal{S}.
	\end{equation*}
\end{prop}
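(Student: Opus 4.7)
The plan is to mirror the proof of \propref{mono1}, adapted to the $3/2$-scaling appropriate for the degenerate positive phase and applying the one-phase variational identity for $u^+$ from \propref{prop0} in place of the two-phase identity for $u$. The structure has three steps: differentiate $M_+(r)$ term by term; extract a Pohozaev-type identity from \propref{prop0} using a radial test field; substitute and complete the square on $\partial B_r(x^0)$.

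First I would differentiate $M_+(r)$ directly. Using the same boundary-integral differentiation identity as in \propref{mono1} with $\alpha=-4$, $w=u^+$, together with the companion formula $\frac{d}{dr}\left(r^{-3}\int_{B_r(x^0)}g\, dx\right)=-3r^{-4}\int_{B_r(x^0)}g\, dx+r^{-3}\int_{\partial B_r(x^0)}g\, d\mathcal{S}$ for the bulk term, I arrive at
\begin{align*}
\frac{d}{dr}M_+(r)&=-3r^{-4}\int_{B_r(x^0)}\left(|\nabla u^+|^2-x_2\chi_{\{u>0\}}\right)dx+r^{-3}\int_{\partial B_r(x^0)}\left(|\nabla u^+|^2-x_2\chi_{\{u>0\}}\right)d\mathcal{S}\\
&\quad+\tfrac{9}{2}r^{-5}\int_{\partial B_r(x^0)}(u^+)^2\, d\mathcal{S}-3r^{-4}\int_{\partial B_r(x^0)}u^+\nabla u^+\cdot\nu\, d\mathcal{S}.
\end{align*}
Harmonicity of $u^+$ on $\{u>0\}$, justified by the same $\max\{u^+-\epsilon,0\}^{1+\epsilon}$ approximation used in \propref{mono1}, lets me replace $\int_{B_r(x^0)}|\nabla u^+|^2\, dx$ by $\int_{\partial B_r(x^0)}u^+\nabla u^+\cdot\nu\, d\mathcal{S}$.

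Next I would apply the variational identity (\ref{va1}) to the radial cut-off test field $\bm\phi_\kappa(x):=\eta_\kappa(|x-x^0|;r)(x-x^0)$ with $\eta_\kappa(t;r)=\max\{0,\min\{1,(r-t)/\kappa\}\}$. A direct calculation gives $\operatorname{div}\bm\phi_\kappa=2\eta_\kappa+\eta_\kappa'|x-x^0|$, $\nabla u^+ D\bm\phi_\kappa\nabla u^+=\eta_\kappa|\nabla u^+|^2+\eta_\kappa'|x-x^0|(\nabla u^+\cdot\nu)^2$, and $\phi_{2,\kappa}=\eta_\kappa x_2$ (since $x_2^0=0$). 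Passing $\kappa\to0+$, so that $\eta_\kappa\to\chi_{B_r(x^0)}$ pointwise and $\int\eta_\kappa'|x-x^0|f\, dx\to-r\int_{\partial B_r(x^0)}f\, d\mathcal{S}$ for $f\in L^2(D)$, yields the Pohozaev-type identity
\begin{equation*}
r\int_{\partial B_r(x^0)}|\nabla u^+|^2\, d\mathcal{S}=2r\int_{\partial B_r(x^0)}(\nabla u^+\cdot\nu)^2\, d\mathcal{S}-3\int_{B_r(x^0)}x_2\chi_{\{u>0\}}\, dx+r\int_{\partial B_r(x^0)}x_2\chi_{\{u>0\}}\, d\mathcal{S}.
\end{equation*}
Substituting this into the earlier expression for $\frac{d}{dr}M_+(r)$, the two volume terms in $x_2\chi_{\{u>0\}}$ cancel and so do the two boundary terms in $x_2\chi_{\{u>0\}}$; what remains collapses to $2r^{-3}\int_{\partial B_r(x^0)}\bigl((\nabla u^+\cdot\nu)^2-3r^{-1}u^+\nabla u^+\cdot\nu+\tfrac{9}{4}r^{-2}(u^+)^2\bigr)\, d\mathcal{S}$, which is exactly the completed square $2r^{-3}\int_{\partial B_r(x^0)}\bigl(\nabla u^+\cdot\nu-\tfrac{3}{2}u^+/r\bigr)^2\, d\mathcal{S}$.

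The main technical obstacle is the admissibility of $\bm\phi_\kappa$ as a test field in (\ref{va1}), since $\bm\phi_\kappa$ need not vanish on $\Gamma_{\rm tp}\cap B_r(x^0)$ and hence is not \emph{a priori} in $W_0^{1,2}(D\cap\{u\geq0\};\mathbb{R}^2)$. I would circumvent this by replacing $\bm\phi_\kappa$ with $\psi_\delta\bm\phi_\kappa$, where $\psi_\delta$ is a smooth cut-off that vanishes in a $\delta$-neighborhood of $\{u\leq0\}$ and equals one outside a $2\delta$-neighborhood, applying (\ref{va1}) to $\psi_\delta\bm\phi_\kappa$, and then letting $\delta\to0+$; the additional contributions produced by $\nabla\psi_\delta$ vanish in the limit because $u^+=0$ along $\partial\{u\geq0\}$, and the $C^{1,\alpha}$ regularity of $\partial\{u<0\}$ from \propref{reg} supplies the boundary control needed to make the approximation rigorous.
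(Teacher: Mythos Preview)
Your overall strategy is exactly the paper's: differentiate $M_+(r)$ term by term, use the $\max\{u^+-\epsilon,0\}^{1+\epsilon}$ approximation to convert $\int_{B_r(x^0)}|\nabla u^+|^2\,dx$ into the boundary flux $\int_{\partial B_r(x^0)}u^+\nabla u^+\cdot\nu\,d\mathcal{S}$, insert the radial cut-off field $\eta_\kappa(|x-x^0|;r)(x-x^0)$ into the one-phase identity (\ref{va1}) from \propref{prop0}, let $\kappa\to0$ to obtain the Pohozaev relation, and complete the square. The paper deals with the admissibility issue you raise simply by writing the test field as $\eta_\kappa(|x-x^0|;r)(x-x^0)\chi_{\{u\geq0\}}$ and appealing to an unspecified approximation.

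Your explicit cut-off argument, however, is not correct as written. If $\psi_\delta$ vanishes in a $\delta$-neighbourhood of $\{u\leq 0\}$, then $\nabla\psi_\delta$ is supported in a $\delta$-shell lying \emph{inside} $\{u>0\}$ near $\partial\{u>0\}$, and the extra terms it produces in (\ref{va1}) involve $|\nabla u^+|^2$ and $x_2\chi_{\{u>0\}}$, not $u^+$. Neither of these vanishes on $\partial\{u>0\}$ (for instance $|\nabla u^+|^2=-x_2$ on $\Gamma_{\rm op}^+$), so the contributions are of order $1/\delta$ on a set of measure $\sim\delta$ and converge to nontrivial boundary integrals over $\partial\{u>0\}$, not to zero. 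The reason you give, ``$u^+=0$ along $\partial\{u\geq0\}$'', controls the wrong quantity. At minimum the cut-off should be taken around $\overline{\{u<0\}}$ rather than $\{u\leq0\}$, so that within $\{u\geq0\}$ the integrand of (\ref{va1}) already vanishes throughout the cavity $\{u=0\}^\circ$ and only a shell near $\Gamma_{\rm tp}\cap B_r(x^0)$ contributes; there one must still argue that this residual contribution is harmless. Invoking \propref{reg} is not the right ingredient for this step.
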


\begin{proof}
	Notice that $u^+$ satisfies
	$$\int_{B_r(x^0)} \nabla u^+ \cdot \nabla(\max\{u^+-\epsilon,0\}^{1+\epsilon}) dx = \int_{\partial B_r(x^0)} \max\{u^+ - \epsilon,0\}^{1+\epsilon}\nabla u^+\cdot\nu d\mathcal{S}$$
	for any $\epsilon>0$. Take $\epsilon\rightarrow0$ and the approximation gives
	$$\int_{B_r(x^0)} |\nabla u^+|^2 dx = \int_{\partial B_r(x^0)} u^+\nabla u^+\cdot\nu d\mathcal{S}.$$
	Direct calculation of the derivative of $M_+(r)$ shows
	\begin{equation} \label{dr}
	\begin{aligned}
	\frac{d}{dr}M_+(r) &= r^{-3}\int_{\partial B_r(x^0)} \left( |\nabla u^+|^2 - x_2\chi_{\{u>0\}} \right)d\mathcal{S} - 3r^{-4}\int_{B_r(x^0)}\left( |\nabla u^+|^2 - x_2\chi_{\{u>0\}} \right)dx \\
	&\quad +\frac92 r^{-5}\int_{\partial B_r(x^0)} (u^+)^2 d\mathcal{S} -3r^{-4}\int_{\partial B_r(x^0)} u^+\nabla u^+\cdot\nu d\mathcal{S} \\
	&=r^{-3}\int_{\partial B_r(x^0)} \left( |\nabla u^+|^2 - x_2\chi_{\{u>0\}} \right)d\mathcal{S} + 3r^{-4}\int_{B_r(x^0)} x_2\chi_{\{u>0\}} dx \\
	&\quad -6r^{-4}\int_{\partial B_r(x^0)} u^+\nabla u^+\cdot\nu d\mathcal{S} + \frac92 r^{-5}\int_{\partial B_r(x^0)} (u^+)^2 d\mathcal{S}.
	\end{aligned}
	\end{equation}
	
	Now for small $\kappa$ and $\eta_\kappa(t;r):=\max\{0,\min\{1,\frac{r-t}{\kappa}\}\}$, we take after approximation $\bm\phi_\kappa(x)=\eta_\kappa(|x-x^0|;r)(x-x^0)\chi_{\{u\geq0\}}\in W_0^{1,2}(B_r(x^0)\cap\{u\geq0\};\mathbb{R}^2)$ as a test function in the definition of the variational solution $u^+$. We obtain
	\begin{equation*}
	\begin{aligned}
	0 &= \int_{D} \left( |\nabla u^+|^2 - x_2\chi_{\{u>0\}} \right) \left( 2\eta_\kappa(|x-x^0|;r)+\eta_\kappa'(|x-x^0|;r)|x-x^0| \right)dx \\
	& \quad -2\int_{D} \left( |\nabla u^+|^2 \eta_\kappa(|x-x^0|;r) + \left( \nabla u^+ \cdot \frac{x-x^0}{|x-x^0|} \right)^2 \eta_\kappa'(|x-x^0|;r)|x-x^0| \right)dx \\
	& \quad - \int_{D} x_2\eta_\kappa(|x-x^0|;r) \chi_{\{u>0\}} dx.
	\end{aligned}
	\end{equation*}
	Noticing that $\eta_\kappa$ converges pointwise to $\chi_{B_r(x^0)}$ as $\kappa\rightarrow0$ and satisfies
	$$\int_{D} \eta_\kappa'(|x-x^0|;r)f(x)\chi_{\{u\geq0\}}dx \rightarrow \int_{\partial B_r(x^0)\cap\{u\geq0\}} -f(x)d\mathcal{S}$$
	for any $f(x)\in L^2(D)$ as $\kappa\rightarrow0$, it comes to
	\begin{equation*}
	\begin{aligned}
	0&=2\int_{B_r(x^0)}\left( |\nabla u^+|^2 - x_2\chi_{\{u>0\}} \right)dx -r\int_{\partial B_r(x^0)} \left( |\nabla u^+|^2 - x_2\chi_{\{u>0\}} \right)d\mathcal{S} \\
	& \quad +2r\int_{\partial B_r(x^0)} (\nabla u^+\cdot\nu)^2 d\mathcal{S} -2\int_{B_r(x^0)}|\nabla u^+|^2 dx +\int_{B_r(x^0)}(-x_2)\chi_{\{u>0\}} dx \\
	&= 2r\int_{\partial B_r(x^0)}(\nabla u^+\cdot\nu)^2 d\mathcal{S} -r\int_{\partial B_r(x^0)} \left( |\nabla u^+|^2 - x_2\chi_{\{u>0\}} \right)d\mathcal{S} -\int_{B_r(x^0)} 3x_2\chi_{\{u>0\}} dx.
	\end{aligned}
	\end{equation*}
	Plugging this into the identity (\ref{dr}), we obtain that for a.e. $r\in(0,R/2)$,
	\begin{equation*}
	\begin{aligned}
	\frac{d}{dr}M_+(r)&=\frac{2}{r^3}\int_{\partial B_r(x^0)} (\nabla u^+\cdot\nu)^2 d\mathcal{S} -\frac6{r^4}\int_{\partial B_r(x^0)} u^+\nabla u^+\cdot\nu d\mathcal{S} +\frac9{2r^5}\int_{\partial B_r(x^0)} (u^+)^2 d\mathcal{S} \\
	&= 2r^{-3}\int_{\partial B_r(x^0)} \left( \nabla u^+\cdot\nu -\frac32\frac{u^+}r \right)^2 d\mathcal{S},
	\end{aligned}
	\end{equation*}
	which completes the proof.
\end{proof}

\subsubsection{Blow-up analysis for the positive phase}

Our analysis to $u^+$ also relies on scaling arguments. Combined with Lemma \ref{lem1}, we consider the blow-up sequence at $x^0$,
\begin{equation} \label{blowup}
u_k^+=\frac{u^+(x^0+r_k x)}{r_k^{3/2}}
\end{equation}
for $r_k\rightarrow0+$ and $x\in B_{R/r_k}(0)$ such that $x^0+r_k x\in B_R(x^0)\Subset D$.

Notice that $u_k^+$ and
$$u_k^-:=\frac{u^-(x^0+r_k x)}{r_k}$$
have disjoint support in $B_{R/r_k}(0)$, where $u_k^-=U_k^-$ in $B_{R/r_k}(0)$, satisfying the properties in Section 2.1.2.

\begin{lem} \label{lem2}
	Let $u$ be a variational solution of (\ref{eq1.1}) in $D$ with $\lambda>0$ satisfying Assumption \ref{assume} and $x^0=(x_1^0,0)\in S^u_+$. Let $u_k^+$ be a blow-up sequence of $u^+$ at $x^0$ that converges weakly in $W_{\rm{loc}}^{1,2}(\mathbb{R}^2)$ to a blow-up limit, noted as $u_0^+$. Then $u_k^+$ converges strongly to $u_0^+\geq0$ in $\mathbb{R}^2$. Moreover, $u_0^+$ is a homogeneous function of degree $\frac32$.
\end{lem}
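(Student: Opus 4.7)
The plan is to mirror the proof of \lemref{lem1}, replacing the monotonicity formula for $u$ by \propref{mono2} (tailored to the positive phase) and using the $\tfrac32$-order scaling that matches the decay rate of $u^+$ at $x^0$. Three claims must be verified: strong $W^{1,2}_{\rm loc}$-convergence $u_k^+\to u_0^+$, nonnegativity $u_0^+\ge 0$, and homogeneity of degree $\tfrac32$.

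For strong convergence, Assumption~\ref{assume} rescales to $|\nabla u_k^+|^2\le C x_2^-$ uniformly in $k$, so $\{u_k^+\}$ is locally uniformly Lipschitz. Since $u(x^0)=0$ we have $u_k^+(0)=0$, so Arzel\`a--Ascoli gives locally uniform convergence to $u_0^+$; interior elliptic regularity on $\{u_k^+>0\}$ then shows that $u_0^+$ is harmonic on $\{u_0^+>0\}$. Next, by \propref{prop0}, $u_k^+$ is a local variational solution of the one-phase problem, and the truncation device $(u_k^+-\varepsilon)_+$ (exactly as in the proof of \propref{mono2}) yields
\begin{align*}
\int_{\mathbb{R}^2}|\nabla u_k^+|^2\eta\,dx=-\int_{\mathbb{R}^2}u_k^+\,\nabla u_k^+\cdot\nabla\eta\,dx\qquad\forall\,\eta\in C_c^1(\mathbb{R}^2).
\end{align*}
Rellich compactness ($u_k^+\to u_0^+$ in $L^2_{\rm loc}$) combined with weak $L^2_{\rm loc}$-convergence of $\nabla u_k^+$ passes the right-hand side to $-\int u_0^+\nabla u_0^+\cdot\nabla\eta\,dx=\int|\nabla u_0^+|^2\eta\,dx$ (using harmonicity of $u_0^+$ in $\{u_0^+>0\}$). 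Hence $\limsup\int|\nabla u_k^+|^2\eta\,dx\le\int|\nabla u_0^+|^2\eta\,dx$, which with weak lower semicontinuity upgrades to equality and delivers strong $W^{1,2}_{\rm loc}$-convergence. Nonnegativity of $u_0^+$ is inherited from $u_k^+\ge 0$.

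For homogeneity, I first check that $M_+(r)$ is uniformly bounded on $(0,R/2)$: the Bernstein bound yields pointwise estimates $|\nabla u^+|\le C\sqrt{r}$ and $u^+\le Cr^{3/2}$ on $B_r(x^0)$, so each of the three terms in $M_+(r)$ is $O(1)$. Combined with \propref{mono2}, $M_+$ is monotone nondecreasing and bounded, so $M_+(0+):=\lim_{r\to 0+}M_+(r)$ exists and is finite. Rescaling the derivative identity in \propref{mono2} via $r=r_k s$ (using $x_2^0=0$, so that the gravity term $-x_2\chi_{\{u>0\}}$ scales with the same weight as $|\nabla u^+|^2$) yields the clean identity
\begin{align*}
M_+(r_k\sigma)-M_+(r_k\rho)=2\int_{B_\sigma(0)\setminus B_\rho(0)}|x|^{-5}\Bigl(\nabla u_k^+\cdot x-\tfrac32 u_k^+\Bigr)^2\,dx
\end{align*}
for every $0<\rho<\sigma<\infty$. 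The left-hand side tends to $0$ as $r_k\to 0$, and since the weight $|x|^{-5}$ is bounded on the annulus, the strong convergence from the previous step passes the limit through the right-hand side, producing
\begin{align*}
\int_{B_\sigma(0)\setminus B_\rho(0)}|x|^{-5}\Bigl(\nabla u_0^+\cdot x-\tfrac32 u_0^+\Bigr)^2\,dx=0\qquad\text{for all }0<\rho<\sigma<\infty.
\end{align*}
Therefore $\nabla u_0^+\cdot x=\tfrac32 u_0^+$ almost everywhere on $\mathbb{R}^2$, and by continuity together with Euler's homogeneity criterion, $u_0^+$ is homogeneous of degree $\tfrac32$.

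The main technical obstacle I anticipate is legitimizing the integration-by-parts identity $\int|\nabla u_k^+|^2\eta\,dx=-\int u_k^+\nabla u_k^+\cdot\nabla\eta\,dx$ across the moving free boundary $\partial\{u_k^+>0\}$, where $\nabla u_k^+$ may jump. This is handled by the $(u_k^+-\varepsilon)_+$ approximation already used in the proofs of \propref{mono1} and \propref{mono2}, so the passage is essentially bookkeeping; the substantive ingredients of the lemma are the rescaling calculation and the invocation of \propref{mono2}.
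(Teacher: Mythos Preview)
Your proof is correct and follows essentially the same route as the paper's: the strong convergence via the integration-by-parts identity $\int|\nabla u_k^+|^2\eta=-\int u_k^+\nabla u_k^+\cdot\nabla\eta$ and the homogeneity via the rescaled monotonicity formula from \propref{mono2} are exactly what the paper does. Your version is in fact more carefully written, since you explicitly justify the existence of $M_+(0+)$ via boundedness and monotonicity, and you spell out the $(u_k^+-\varepsilon)_+$ approximation underlying the integration-by-parts step.
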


\begin{proof}
	The convergence that $u_k^+\rightarrow u_0^+$ strongly in $W_{\rm{loc}}^{1,2}(\mathbb{R}^2)$ comes straightly from the fact that
	$$\limsup_{r_k\rightarrow0+}\int_{B_{R/r_k}(0)}|\nabla u_k^+|^2\eta dx \leq \int_{\mathbb{R}^2}|\nabla u_0^+|^2\eta dx$$
	for $\eta\in C_0^1(\mathbb{R}^2)$. In fact,
	$$\int_{B_{R/r_k}(0)}|\nabla u_k^+|^2\eta dx = \int_{B_{R/r_k}(0)} -u_k^+ \nabla u_k^+\cdot\nabla\eta dx,$$
	and the right-hand side converges to
	$$\int_{\mathbb{R}^2} -u_0^+\nabla u_0^+\cdot\nabla\eta dx = \int_{\mathbb{R}^2}|\nabla u_0^+|^2\eta dx.$$
	On the other hand, the monotonicity formula in Proposition \ref{mono2} gives that for all $0<\rho<\sigma<\infty$ and $r_k>0$,
	\begin{equation*}
	\begin{aligned}
	M_+(r_k\sigma)-M_+(r_k\rho)&=\int_{r_k\rho}^{r_k\sigma} 2 r^{-3}\int_{\partial B_r(x^0)}\left( \nabla u^+\cdot\nu-\frac32\frac{u^+}{r}  \right)^2d\mathcal{S} dr \\
	&=2\int_{\rho}^{\sigma} r^{-5} \int_{\partial B_r}\left( \nabla u_k^+\cdot x - \frac32u_k^+ \right)^2 d\mathcal{S} dr \\
	&=2\int_{B_\sigma(0)\backslash B_\rho(0)} |x|^{-5} \left( \nabla u_k^+\cdot x - \frac32 u_k^+ \right)^2 dx.
	\end{aligned}
	\end{equation*}
	The left-hand side of the above equation converges to $0$ as $r_k\rightarrow0+$, which yields the desired homogeneity of $u_0^+$.
\end{proof}

\begin{rem}
	We use the notation $u_0^+$ because it is in fact the positive part of some function $u_0$, which is defined to be the combination of the blow-up limits of $u^\pm$ at $x^0$ as follows. Denote $u_0^-$ to be the function that $u_k^-\rightarrow u_0^-:=U_0^-$ weakly in $W_{\rm{loc}}^{1,2}(\mathbb{R}^2)$ (thus strongly in $W_{\rm{loc}}^{1,2}(\mathbb{R}^2)$) as in Lemma \ref{lem1}, and take $u_0^+$ as in Lemma \ref{lem2} under subsequence, then the functions $u_0^+$ and $u_0^-$ have disjoint support. Set $u_0:=u_0^+ - u_0^-$ and then the notations are well-defined.
\end{rem}

\subsection{The weighted density}

In this section we derive structural properties of the "weighted density" $M_+(0+)$ for $u^+$, which is roughly related to the Lebesgue density $\frac{|\{u>0\}\cap B_r(x^0)|}{|B_r(x^0)|}$ as $r\rightarrow0+$. The blow-up limits at $x^0$, which leads to the asymptotic homogeneity of the solution $u$ as it approaches the stagnation point, classify the possible values of $M_+(0+)$ and obtain a geometric description of the solution at the stagnation point $x^0$.

It is remarkable that we have got the uniqueness of the blow-up limit of the negative phase $u_0^-=\lambda(x\cdot\nu_0)^-$ at $x^0$ in Section 2.2. Namely, there is a unique vector
$$\nu_0=(\nu_1^0,\nu_2^0)\in\partial B_1,$$
called the normal vector of $\partial\{u<0\}$ at $x^0$, such that $\{u_0>0\}\subset\{x\cdot\nu>0\}$, which determines the location of the open set $\{u_0>0\}$. To simplify the exposition, we denote
$$\theta_0:=\arctan\frac{\nu_1^0}{-\nu_2^0}$$
the asymptotic deflection angle of $\partial\{u<0\}$ at $x^0$, being perpendicular to $\nu_0$, which implies that $\{x=(x_1,x_2) \ | \ u_0^-(x)>0\}=\{ (r,\theta) \ | \ r>0 \ \text{and} \ \theta\in(\theta_0,\theta_0+\pi) \}$. (See Figure \ref{theta0}.)
\begin{figure}[!h] 
	\includegraphics[width=85mm]{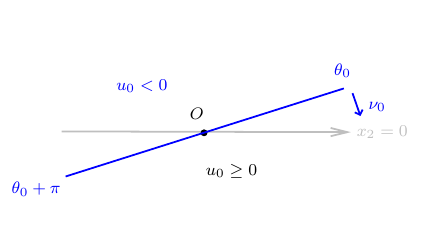}
	\caption{The asymptotic direction of $\partial\{u<0\}$.}
	\label{theta0}
\end{figure}

\begin{prop}(Weighted density for $u^+$) \label{density}
	Let $u$ be a variational solution of (\ref{eq1.1}) in $D$ with $\lambda>0$ satisfying Assumption \ref{assume}, $x^0\in S^u_+$, $\nu_0=(\nu_1^0,\nu_2^0)$ be the normal vector of $\partial\{u<0\}$ at $x^0$ and $\theta_0=\arctan\frac{\nu_1^0}{-\nu_2^0}\in[-\pi,\pi)$ is the asymptotic deflection angle of $\partial\{u<0\}$ at $x^0$. Let $u_0^+$ be the blow-up limit in Lemma \ref{lem2}. Then there exists $\Theta\in[-\pi, -\frac{2\pi}{3}]$ such that the weighted density $M_+(0+)$ has only three possible values
	$$M_+(0+)=\lim_{r_k\rightarrow0+}\int_{B_1} x_2^- \chi_{\{u_k^+>0\}} dx\in\left\{ 0, \ -\frac{\sqrt{3}}{2}\sin\left(\Theta+\frac{\pi}{3}\right), \ \frac{\cos\theta_0+1}{2} \right\}.$$
	More precisely, the weighted density can be classified as follows.
	
	(i) (Nontrivial blow-up limit) If
	\begin{equation*}
	u_k^+(x)=\frac{u^+(x^0+rx)}{r^{3/2}}\rightarrow u_0^+(x)=
	\begin{cases}
	C \rho^{3/2} \cos \left( \frac32\theta-\frac32\Theta-\frac{\pi}{2} \right), \quad \Theta<\theta<\Theta+\frac{2\pi}{3}, \\
	0, \qquad\qquad\qquad\qquad\qquad\quad \text{otherwise,}
	\end{cases}
	\end{equation*}
	for $C=C(\Theta)>0$ as $r\rightarrow0+$ strongly in $W_{\rm{loc}}^{1,2}(\mathbb{R}^2)$ and locally uniformly in $\mathbb{R}^2$, where $x=(\rho\cos\theta, \rho\sin\theta)$, then
	$$M_+(0+)= -\frac{\sqrt{3}}{2}\sin\left(\Theta+\frac{\pi}{3}\right).$$ Moreover, it can be divided into three subcases by the value of $\Theta$, which has only three possible values $\left\{ -\frac{5\pi}{6}, \ \theta_0-\pi, \ \theta_0-\frac{2\pi}{3} \right\}$.
	
	(i-1) (Detached case) In this subcase, $\Theta=-\frac{5\pi}{6}$ and the blow-up limit
	\begin{equation*}
	u_0^+(x)=u_0^+(\rho,\theta)=
	\begin{cases}
	\frac{\sqrt{2}}3 \rho^{3/2} \cos \left( \frac32\theta+\frac{3\pi}{4} \right), \quad -\frac{5\pi}{6}<\theta<-\frac{\pi}{6}, \\
	0, \qquad\qquad\qquad\qquad\quad\ \text{otherwise,}
	\end{cases}
	\end{equation*}
	and the weighted density
	$$M_+(0+)=\lim_{r\rightarrow0+}\int_{B_1}x_2^-\chi_{ \{ x:-\frac{5\pi}{6}<\theta<-\frac{\pi}{6} \} } dx =\frac{\sqrt{3}}2.$$
	
	(i-2) (Left-overlapping case) In this subcase, $\Theta=\theta_0-\pi$ and the blow-up limit
	\begin{equation*}
	u_0^+(x)=u_0^+(\rho,\theta)=
	\begin{cases}
	\frac23\sqrt{-\sin\left(\theta_0-\frac{\pi}{3}\right)} \rho^{3/2} \cos \left( \frac32\theta-\frac32\theta_0+\pi \right), \quad \theta_0-\pi<\theta<\theta_0-\frac{\pi}{3}, \\
	0, \qquad\qquad\qquad\qquad\qquad\qquad\qquad\qquad\quad\ \ \text{otherwise,}
	\end{cases}
	\end{equation*}
	and the weighted density
	$$M_+(0+)=\lim_{r\rightarrow0+}\int_{B_1}x_2^-\chi_{ \{ x:\theta_0-\pi<\theta<\theta_0-\pi/3 \} } dx =-\frac{\sqrt{3}}2\sin(\theta_0-\frac{2\pi}{3}).$$
	
	(i-3) (Right-overlapping case) In this subcase, $\Theta=\theta_0-\frac{2\pi}{3}$ and the blow-up limit
	\begin{equation*}
	u_0^+(x)=u_0^+(\rho,\theta)=
	\begin{cases}
	\frac23 \sqrt{-\sin\left( \theta_0-\frac{2\pi}{3} \right)} \rho^{3/2} \cos \left( \frac32\theta-\frac32\theta_0+\frac{\pi}{2} \right), \quad \theta_0-\frac{2\pi}{3}<\theta<\theta_0, \\
	0, \qquad\qquad\qquad\qquad\qquad\qquad\qquad\qquad\qquad \text{otherwise,}
	\end{cases}
	\end{equation*}
	and the weighted density
	$$M_+(0+)=\lim_{r\rightarrow0+}\int_{B_1}x_2^-\chi_{ \{ x:\theta_0-2\pi/3<\theta<\theta_0 \} } dx =-\frac{\sqrt{3}}2\sin(\theta_0-\frac{\pi}{3}).$$
	
	(ii) (Trivial blow-up limit) If
	\begin{equation*}
	u_k^+(x)=\frac{u^+(x^0+rx)}{r^{3/2}}\rightarrow u_0^+(x)\equiv0
	\end{equation*}
	as $r\rightarrow0+$ strongly in $W_{\rm{loc}}^{1,2}(\mathbb{R}^2)$ and locally uniformly in $\mathbb{R}^2$, then $M_+(0+)\in\left\{ 0, \ \frac{\cos\theta_0+1}{2} \right\}$ has only two possible values.
\end{prop}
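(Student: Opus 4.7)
The plan is to exploit the $\tfrac{3}{2}$-homogeneity of $u_0^+$ from \lemref{lem2} together with the harmonicity of $u_0^+$ in its positivity set to reduce the geometry of $\{u_0^+>0\}$ to a discrete list of explicit sectors, then to pin down the multiplicative constant using the free boundary condition, and finally to read off $M_+(0+)$ by a direct integration. Writing $u_0^+(\rho,\theta)=\rho^{3/2}\phi(\theta)$ in polar coordinates centered at $x^0$, harmonicity gives $\phi''+\tfrac{9}{4}\phi=0$ on each connected component of $\{\phi>0\}$, so every such component is an open arc of length exactly $\tfrac{2\pi}{3}$ on which $\phi(\theta)=C\cos\bigl(\tfrac{3}{2}(\theta-\Theta)-\tfrac{\pi}{2}\bigr)$ for some $C>0$ and some starting angle $\Theta$.

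The support of $\phi$ is constrained by two facts. The standing assumption that $u\leq 0$ in $B_R(x^0)\cap\{x_2\geq0\}$ passes to the blow-up to force $\mathrm{supp}\,\phi\subset[-\pi,0]$. Moreover, by \lemref{lem1} one has $u_0^-=-\lambda(x\cdot\nu_0)^-$ with $\{u_0^-<0\}$ equal to the half-plane $\{\theta\in(\theta_0,\theta_0+\pi)\}$, and since $u_0^+$ and $u_0^-$ have disjoint supports, we obtain $\mathrm{supp}\,\phi\subset\{\theta\in[\theta_0-\pi,\theta_0]\}\pmod{2\pi}$. The intersection of these two arcs has angular measure at most $\pi<\tfrac{4\pi}{3}$, so $\phi$ has at most one positivity component; this gives the dichotomy between case (i) (one nontrivial component) and case (ii) ($\phi\equiv0$).

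In case (i), the remaining parameters $(\Theta,C)$ are pinned down via the free boundary condition on the edges of the sector. The Bernstein hypothesis $|\nabla u^+|^2\leq x_2^-$ passes to the blow-up to give $|\nabla u_0^+|^2\leq -x_2$ in $\{u_0^+>0\}$, while \propref{prop0} transfers to the weak inequality $|\nabla u_0^+|^2\geq -x_2$ on $\partial\{u_0^+>0\}$; on any angular edge of the sector that does \emph{not} coincide with the ray $\{u_0^-=0\}=\{\theta=\theta_0\}\cup\{\theta=\theta_0-\pi\}$, these combine to yield the equality $|\nabla u_0^+|^2=-x_2$. A direct computation of the angular derivative along the ray $\theta=\Theta$ gives $|\nabla u_0^+|^2=\tfrac{9C^2}{4}\rho$, and matching against $-\rho\sin\Theta$ yields $C=\tfrac{2}{3}\sqrt{-\sin\Theta}$ (and analogously on the opposite edge if it is also one-phase). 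The enumeration of admissible configurations is then exhaustive: (i-1) neither edge is on $\{u_0^-=0\}$, forcing $\sin\Theta=\sin(\Theta+\tfrac{2\pi}{3})$ together with $\Theta\in[-\pi,-\tfrac{2\pi}{3}]$, so $\Theta=-\tfrac{5\pi}{6}$ and $C=\tfrac{\sqrt{2}}{3}$; (i-2) the left edge coincides, $\Theta=\theta_0-\pi$, $C=\tfrac{2}{3}\sqrt{-\sin(\theta_0-\tfrac{\pi}{3})}$; (i-3) the right edge coincides, $\Theta+\tfrac{2\pi}{3}=\theta_0$, $C=\tfrac{2}{3}\sqrt{-\sin(\theta_0-\tfrac{2\pi}{3})}$.

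Once $u_0^+$ is known, $M_+(0+)$ follows by direct integration. Euler's identity gives $\partial_\nu u_0^+=\tfrac{3}{2}u_0^+$ on $\partial B_1$, hence by harmonicity $\int_{B_1}|\nabla u_0^+|^2\,dx=\tfrac{3}{2}\int_{\partial B_1}(u_0^+)^2\,d\mathcal{S}$, so the Dirichlet and boundary terms cancel in $M_+(r)$ and the strong $W^{1,2}_{\mathrm{loc}}$ convergence of \lemref{lem2} together with the monotonicity of \propref{mono2} give $M_+(0+)=\int_{B_1}(-x_2)\chi_{\{u_0^+>0\}}\,dx$. In polar coordinates this reduces to $\tfrac{1}{3}\int_{\Theta}^{\Theta+2\pi/3}(-\sin\theta)\,d\theta$, and a product-to-sum identity expresses this as a scalar multiple of $-\sin(\Theta+\tfrac{\pi}{3})$, yielding the stated values in the three subcases. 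In case (ii) the same cancellation applies, but $\chi_{\{u_k^+>0\}}$ may still admit a nontrivial $L^1_{\mathrm{loc}}$-limit supported in $\{x\cdot\nu_0>0\}\cap\{x_2<0\}$; analysing the two options (empty, or the maximal such set compatible with the disjointness from $\mathrm{supp}\,u_0^-$) yields the two values $0$ and $\tfrac{\cos\theta_0+1}{2}$. The main technical obstacle is the rigorous passage of the free boundary conditions to the blow-up limit on the one-phase rays, where the Bernstein upper bound and the variational inequality of \propref{prop0} must be combined in a manner stable enough to survive the weak-to-strong $W^{1,2}$ transition; a secondary subtlety lies in case (ii), where $u_0^+\equiv0$ but the positivity indicator $\chi_{\{u_k^+>0\}}$ need not vanish, so its possible limits must be classified by a separate argument based on the condition $|\nabla u^+|^2=-x_2$ on $\Gamma_{\rm op}^+$.
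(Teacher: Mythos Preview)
Your overall strategy---homogeneity $\Rightarrow$ $\tfrac{2\pi}{3}$-sector, free boundary condition $\Rightarrow$ pin down $(\Theta,C)$, then integrate for $M_+(0+)$---matches the paper, but there is a real gap in how you handle the indicator $\chi_{\{u_k^+>0\}}$. You write $M_+(0+)=\int_{B_1}(-x_2)\chi_{\{u_0^+>0\}}\,dx$, but strong $W^{1,2}_{\mathrm{loc}}$ convergence of $u_k^+$ does \emph{not} force $\chi_{\{u_k^+>0\}}\to\chi_{\{u_0^+>0\}}$ in $L^1_{\mathrm{loc}}$: the limit $\chi_0^+$ satisfies $\chi_0^+\geq\chi_{\{u_0^+>0\}}$ a.e.\ but could be strictly larger on a set of positive measure. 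The paper addresses this by passing the full domain-variation identity (with test vectors $\bm\phi$ supported in $\{x\cdot\nu_0>\delta\}$) to the blow-up limit, obtaining a limiting identity in which $\chi_0^+$ appears explicitly. Integrating that identity by parts with $\bm\phi=\eta\,\bm n$ near a boundary ray yields simultaneously (a) the relation $|\nabla u_0^+|^2=-x_2(1-I_0^+)$ on that ray, and (b), via Hopf's lemma ($\nabla u_0^+\cdot\bm n\neq 0$), that the constant value $I_0^+$ of $\chi_0^+$ in the adjacent complementary cone is $0$. Output (b) is exactly what legitimises $M_+(0+)=\int_{B_1}x_2^-\chi_{\{\Theta<\theta<\Theta+2\pi/3\}}\,dx$ in case (i), and it is missing from your argument.

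The same mechanism is what settles case (ii): when $u_0^+\equiv 0$ the limiting variational identity collapses to $\int x_2\,\bm\phi\cdot\nabla\chi_0^+\,dx=0$, forcing $\chi_0^+$ to be constant on $\{x_2<0\}\cap\{x\cdot\nu_0>\delta\}$; letting $\delta\to 0$ gives $\chi_0^+\in\{0,1\}$ on $\{x_2<0\}\cap\{x\cdot\nu_0>0\}$ and hence $M_+(0+)\in\{0,\tfrac{\cos\theta_0+1}{2}\}$. Your proposed route ``Bernstein upper bound plus \propref{prop0} lower bound'' can at best recover the boundary condition on one-phase rays (and even that requires first showing that $u_0^+$ is itself a local variational solution, i.e.\ passing the variational identity to the limit), but it gives no control on $\chi_0^+$ off the support of $u_0^+$. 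You should replace it by the domain-variation argument, which handles the boundary condition and the indicator limit in one stroke.
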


\begin{rem}
	The statement (i) in Proposition \ref{density} says that $\Theta$, the starting angle of the cone $\{x=(x_1,x_2) \ | \ u_0^+(x)>0\}=\{(r,\theta) \ | \ r>0 \ \text{and} \ \theta\in(\Theta,\Theta+2\pi/3)\}$, only possesses three possible values $-\frac{5\pi}{6}$, $\theta_0-\pi$ and $\theta_0-\frac{2\pi}{3}$, corresponding to the cases in Figure \ref{theta1} below, and the range of $\Theta\in[-\pi,-\frac{2\pi}{3}]$ implies the range of $\nu_0$ in turn. Namely, $\nu_0$ is uniquely determined such that $\{x=(x_1,x_2) \ | \ u_0^+(x)>0\}\cap\{ (r,\theta) \ | \ r>0 \ \text{and} \ \theta\in(\theta_0,\theta_0+\pi) \}=\varnothing$ for $\theta_0 = \arctan\frac{\nu_1^0}{\nu_2^0}$.
	\begin{figure}[!h] 
		\includegraphics[width=155mm]{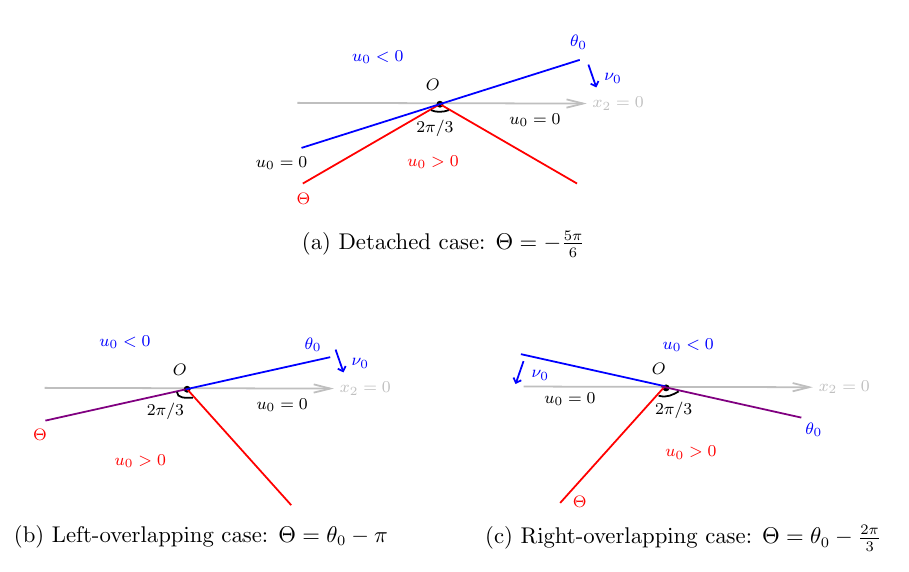}
		\caption{Three corresponding cases for $\Theta$ in case (i).}
		\label{theta1}
	\end{figure}
\end{rem}

\begin{rem}
	The asymptotic singular profile of the free boundary in one-phase water wave established in \cite{VW11} presents a symmetric feature in the $x_2$-direction at the stagnation point, which is coincide with the direction of gravity. However, in our two-phase situation, the two phases may have interactions on each other, and the singular profile of $\partial\{u>0\}$ is not necessarily symmetric. In the detached case when $\Theta=-\frac{5\pi}{6}$ and the deflection angle $\theta_0\in(-\frac{\pi}{6},\frac{\pi}{6})$, the component cone $\{u_0>0\}$ is symmetric about the $x_2$-axis. In the two overlapping cases when $\Theta=\theta_0-\pi$ or $\Theta=\theta_0-\frac{2\pi}{3}$ and the deflection angle $\theta_0\in[-\frac{\pi}{3},\frac{\pi}{3}]$, the component cone $\{u_0>0\}$ is not necessarily symmetric.
\end{rem}

\begin{rem}
	The following Table \ref{table2} compares the "weighted density" at the stagnation point $x^0$ in one-phase problem and two-phase problem. We simply write "Non-deg. stag. pt." to represent "Non-degenerate stagnation point", and "Deg. stag. pt." to represent "Degenerate stagnation point".
	\begin{table}[h]
		\caption{"Weighted densities" in one-phase and two-phase problems}\label{table2}
		\centering
		\begin{tabular}{|c|c|c|c|c|}
			\hline
			\multicolumn{2}{|c|}{ }& Blow-up limit $u_0(x)$ & \multicolumn{2}{c|}{Weighted density $M(0+)$} \\
			\hline
			\multirow{3}{*}{\makecell{One-phase \\ $|\nabla u(x^0)|=0$}} & \makecell{Non-deg. \\ stag. pt.} & Trigonometric function & \multicolumn{2}{c|}{$\sqrt{3}/2$} \\
			\cline{2-5}
			& \multirow{2}{*}{ \makecell{ Deg. \\ stag. pt.} } &\multirow{2}{*}{$u_0\equiv0$} & Trivial density & $0$ \\
			\cline{4-5}
			& & & Nontrivial density & $1$ \\
			\hline
			\multicolumn{2}{|c|}{}& Blow-up limit $u_0^+(x)$ & \multicolumn{2}{c|}{Weighted density $M_+(0+)$} \\
			\hline
			\multirow{3}{*}{\makecell{Two-phase \\ $|\nabla u^+(x^0)|=0$}} & \makecell{Non-deg. \\ stag. pt.} & Trigonometric function & \multicolumn{2}{c|}{\makecell{ {$-\frac{\sqrt{3}}{2}\sin\left(\Theta+\frac{\pi}{3}\right)$} \\ ($\Theta$ depends on $\theta_0$, $3$ subcases)} } \\
			\cline{2-5}
			& \multirow{2}{*}{ \makecell{ Deg. \\ stag. pt.} } & \multirow{2}{*}{$u_0^+\equiv0$} & Trivial density & $0$ \\
			\cline{4-5}
			& & & Nontrivial density & $(\cos\theta_0+1)/2$ \\
			\hline
		\end{tabular}
	\end{table}
\end{rem}

\begin{proof}[Proof of Proposition \ref{density}]
	Consider a blow-up sequence
	$$u_k(x)=u_k^+(x)-u_k^-(x)=\frac{u^+(x^0+r_kx)}{r_k^{3/2}}-\frac{u^-(x^0+r_kx)}{r_k}$$
	where $r_k\rightarrow0+$, with the blow-up limit $u_0=u_0^+ - u_0^-$. Using Lemma \ref{lem2} and the homogeneity of $u_0^+$, we obtain that
	\begin{equation*}
	\begin{aligned}
	\lim_{r_k\rightarrow0+} M_+(r_k) &= \int_{B_1}|\nabla u_0^+|^2 dx - \frac32\int_{\partial B_1} (u_0^+)^2 d\mathcal{S} + \lim_{r_k\rightarrow0+} r_k^{-3}\int_{B_r(x^0)} x_2^- \chi_{\{u>0\}} dx \\
	&=\lim_{r_k\rightarrow0+}\int_{B_1} x_2^- \chi_{\{u_k^+>0\}} dx.
	\end{aligned}
	\end{equation*}
	Now fix any small $\delta>0$. Take $\bm\phi=(\phi_1,\phi_2)\in W_0^{1,2}(\mathbb{R}^2\cap\{x\cdot\nu_0\geq\delta\};\mathbb{R}^2)$ and $\bm\phi_k(y):=\bm\phi(\frac{y-x^0}{r_k})=\bm\phi(x)$ for $y=x^0+r_kx$. Then the flatness of the free boundary of $u_0^-$ implies that $\{x\cdot\nu_0\geq\delta\}\subset\{x \ |\ u_k^-(x)=0\}$ for sufficiently small $r_k$ depending on $\delta$, hence $\bm\phi_k(y)\in W_0^{1,2}(\mathbb{R}^2\cap\{u(y)\geq0\};\mathbb{R}^2)$.
	
	We use $\bm\phi_k=(\phi_{k,1},\phi_{k,2})$ as the test function in the definition of the variational solution $u$, and obtain
	\begin{equation*}
	\begin{aligned}
	0&=\int_{B_r(x^0)} \left( |\nabla u^+|^2 div\bm \phi_k - 2\nabla u^+ D\bm\phi_k\nabla u^+ - (\phi_{k,2}+y_2 div\bm \phi_k)\chi_{\{u>0\}} \right) dy \\
	&= r_k^2 \int_{B_{r/r_k}} \left( |\nabla u_k^+|^2 div\bm\phi - 2\nabla u_k^+ D\bm\phi \nabla u_k^+ - (\phi_2+x_2 div\bm\phi)\chi_{\{u_k^+>0\}} \right) dx.
	\end{aligned}
	\end{equation*}
	Passing the limit $r_k\rightarrow0+$, the strong convergence of $u_k^+$ to $u_0^+$ in $W_{\rm{loc}}^{1,2}(\mathbb{R}^2)$ and the compact embedding from $BV$ to $L^1$ imply that
	\begin{equation} \label{eq1}
	0=\int_{\mathbb{R}^2} \left( |\nabla u_0^+|^2 div\bm\phi -2\nabla u_0^+ D\bm\phi \nabla u_0^+ - (\phi_2+x_2 div\bm\phi)\chi_0^+ \right) dx,
	\end{equation}
	where $\chi_0^+$ is the strong $L_{\rm{loc}}^1(\mathbb{R}^2\cap\{x_2<0\})$ limit of $\chi_{\{u_k^+>0\}}$ along a subsequence. Notice that the values of the function $\chi_0^+$ are almost everywhere in $\{0,1\}$, and the locally uniform convergence of $u_k^+$ to $u_0^+$ implies that $\chi_0^+=1$ in $\{u_0^+>0\}$.
	
	Moreover, the $\frac32$-homogeneity of $u_0^+$ and its harmonicity in $\{u_0>0\}$ show that each connected component of $\{u_0>0\}$ is a cone with vertex at the origin and of opening angle $\frac{2\pi}{3}$. Since $u\leq0$ in $\{x_2\geq0\}$, this shows that $\{u_0>0\}$ has at most one connected component. Note also that (\ref{eq1}) implies that $\chi_0^+$ is a constant in each open connected set $G\subset\{u_0\leq0\}^{\circ}\cap\{x\cdot\nu_0\geq\delta\}$, denoted as $I_0^+$, which does not intersect $\{x_2=0\}$. Here $\{u_0\leq0\}^{\circ}$ means the interior of the set $\{u_0\leq0\}$.
	
	Consider the first case when $\{u_0>0\}$ is non-empty, and is therefore a cone as described above. Let $z\in\partial\{u_0>0\}\backslash\{0\}$ be an arbitrary point. Note that the normal to $\partial\{u_0>0\}$ has the constant value vector $\bm n$ in $B_{\tau}(z)\cap\{u_0>0\}$ for some $\tau>0$. See Figure \ref{2-1}.
	
	\begin{figure}[!h] 
		\includegraphics[width=70mm]{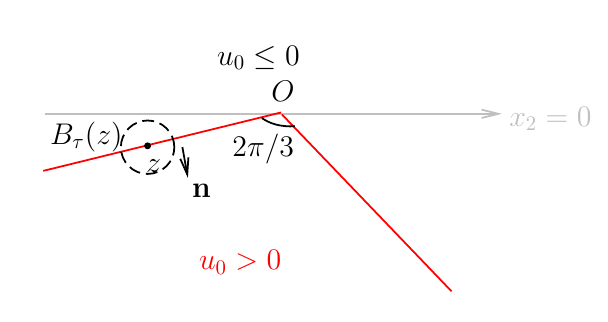}
		\caption{The ball $B_\tau(z)$ and the vector $\bm n$}
		\label{2-1}
	\end{figure}
	
	Plugging in $\bm\phi(x):=\eta(x)\bm n$ into (\ref{eq1}), where $\eta\in W_0^{1,2}(B_{\tau}(z)\cap\{x\cdot\nu_0\geq\delta\})$ is arbitrary. Integrating by parts, it follows that
	\begin{equation} \label{eq2}
	\begin{aligned}
	0&=\int_{B_{\tau}(z)\cap\{u_0>0\}\cap\{x\cdot\nu_0\geq\delta\}} \left( \nabla\cdot\left( |\nabla u_0^+|^2\bm\phi - 2(\bm\phi\cdot\nabla u_0^+)\nabla u_0^+ \right) - div(x_2\bm\phi) \right) dx \\
	&\quad - \int_{B_{\tau}(z)\cap\{u_0\leq0\}\cap\{x\cdot\nu_0\geq\delta\}} div(x_2\bm\phi) I_0^+ dx \\
	&=\int_{B_{\tau}(z)\cap\partial\{u_0>0\}\cap\{x\cdot\nu_0\geq\delta\}} \left( (|\nabla u_0^+|^2\bm\phi - 2(\bm\phi\cdot\nabla u_0^+)\nabla u_0^+)\bm n - (x_2\bm\phi)\bm n  \right) d\mathcal{S} \\
	&\quad + \int_{B_{\tau}(z)\cap\partial\{u_0>0\}\cap\{x\cdot\nu_0\geq\delta\}} x_2\bm\phi I_0^+\bm n d\mathcal{S} \\
	&=-\int_{B_{\tau}(z)\cap\partial\{u_0>0\}\cap\{x\cdot\nu_0\geq\delta\}} \left( |\nabla u_0^+|^2 +x_2(1-I_0^+) \right)\eta d\mathcal{S}.
	\end{aligned}
	\end{equation}
	Here $I_0^+$ denotes the constant value of $\chi_0^+$ in the respective connected component of $\{u_0\leq0\}^{\circ}\cap\{x_2\neq0\}\cap\{x\cdot\nu_0\geq\delta\}$. Note that thanks to Hopf's lemma, one has $\nabla u_0^+\cdot\bm n\neq0$ on $B_{\tau}(z)\cap\partial\{u_0>0\}$. It follows therefore that $I_0^+\neq1$, and necessarily $I_0^+=0$. We deduce from (\ref{eq2}) that
	$$|\nabla u_0^+|^2=-x_2 \quad \text{on} \quad B_{\tau}(z)\cap\partial\{u_0>0\}\cap\{x\cdot\nu_0\geq\delta\}.$$
	Write $u_0^+(\rho,\theta)=C\rho^{3/2}\cos(\frac32\theta+\zeta_0)$ and assume that the cone $\{u_0>0\}$ lies between $\theta=\Theta$ and $\theta=\Theta+\frac{2\pi}{3}$, we obtain
	\begin{equation} \label{eq5}
	\frac94 C^2 = -\sin\theta \quad \text{on} \quad B_{\tau}(z)\cap\partial\{u_0>0\}\cap\{x\cdot\nu_0\geq\delta\}.
	\end{equation}
	
	Note that $\partial\{u_0>0\}$ may intersect with $\{x\cdot\nu_0=0\}$. There are two cases distinguished by the position of $\partial\{u_0>0\}$, the detached case and the overlapping case, since $u_0^+$ and $u_0^-$ have disjoint support and $u_0^-(x)=\lambda(x\cdot\nu_0)^-$. In the detached case, the condition (\ref{eq5}) is satisfied on both two sides $\partial\{u_0>0\}$ of the cone $\{u_0>0\}$, while in the overlapping case, the condition (\ref{eq5}) is satisfied only on one side of the cone $\{u_0>0\}$ (on the left side in the left-overlapping case, and on the right side in the right-overlapping case).
	
	\emph{Subcase 1. (Detached case)} $\partial\{u_0^+>0\}\cap\partial\{u_0^->0\}=\{0\}$, namely, $x\cdot\nu_0\neq0$ for any $x\in\partial\{u_0^+>0\}\backslash\{0\}$.
	
	Due to the arbitrariness of the point $z$, we have
	\begin{equation*}
	\begin{cases}
	\frac94 C^2 = -\sin\theta \quad \text{on} \quad \{\theta=\Theta\}\cap\{x\cdot\nu_0\geq\delta\}, \\
	\frac94 C^2 = -\sin\theta \quad \text{on} \quad \{\theta=\Theta+\frac{2\pi}{3}\}\cap\{x\cdot\nu_0\geq\delta\}.
	\end{cases}
	\end{equation*}
	Hence, $\sin\Theta=\sin(\Theta+\frac{2\pi}{3})$, which determines the unique value $\Theta=-\frac{5\pi}{6}$. Then it is straight forward to compute that $C=\frac{\sqrt{2}}{3}$. Note that in this case, the disjoint open sets $\{u_0^\pm>0\}$ together with the location of $\{u_0^+>0\}$ imply that $\theta_0\in(-\frac{\pi}{6},\frac{\pi}{6})$. See Figure \ref{corner1-1}.
	
	\begin{figure}[!h] 
		\includegraphics[width=90mm]{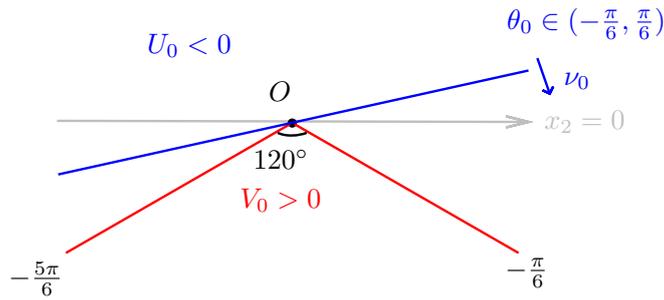}
		\caption{The detached case when $\lambda>0$.}
		\label{corner1-1}
	\end{figure}
	
	\emph{Subcase 2. (Overlapping case)} $\left( \partial\{u_0^+>0\}\cap\partial\{u_0^->0\} \right) \backslash \{0\} \neq\varnothing$, namely, $x\cdot\nu_0=0$ if $\theta=\Theta$ or $\theta=\Theta+\frac{2\pi}{3}$.
	
	In this case denote $\theta_0=\arctan\frac{\nu_1^0}{-\nu_2^0}\in[0,2\pi)$ for $\nu_0=(\nu_1^0,\nu_2^0)$. Then $\Theta=\theta_0-\pi$ in the left-overlapping case or $\Theta=\theta_0-\frac{2\pi}{3}$ in the right-overlapping case. Moreover, there holds either
	\begin{equation*}
	\frac94 C^2 = -\sin\theta \quad \text{on} \quad \{\theta=\Theta+2\pi/3\}\cap\{x\cdot\nu_0\geq\delta\}.
	\end{equation*}
	in the left-overlapping case or
	\begin{equation*}
	\frac94 C^2 = -\sin\theta \quad \text{on} \quad \{\theta=\Theta\}\cap\{x\cdot\nu_0\geq\delta\}
	\end{equation*}
	in the right overlapping case. Hence,
	$$u_0^+=\frac23 C \rho^{3/2} \cos \left( \frac32\theta-\frac32\Theta-\frac{\pi}{2} \right) \quad \text{in} \quad \{u_0>0\},$$
	where $C=\sqrt{-sin\left(\Theta+\frac{2\pi}3\right)}$ or $C=\sqrt{-sin\Theta}$, namely, $C=\sqrt{-\sin\left(\theta_0-\frac{\pi}{3}\right)}$ or $C=\sqrt{-\sin\left( \theta_0-\frac{2\pi}{3} \right)}$. Note that the angle $2\pi/3$ of the cone $\{u_0^+>0\}$ in $\{x_2<0\}$ implies that $\Theta\in[-\pi,-\frac{2\pi}{3}]$, which in turn gives that $\theta_0\in[-\frac{\pi}{3},\frac{\pi}{3}]$ since $\{u_0^+>0\}$ and $\{u_0^->0\}$ share common boundary on a half-line locally.
	
	\begin{figure}[!h]
		\includegraphics[width=160mm]{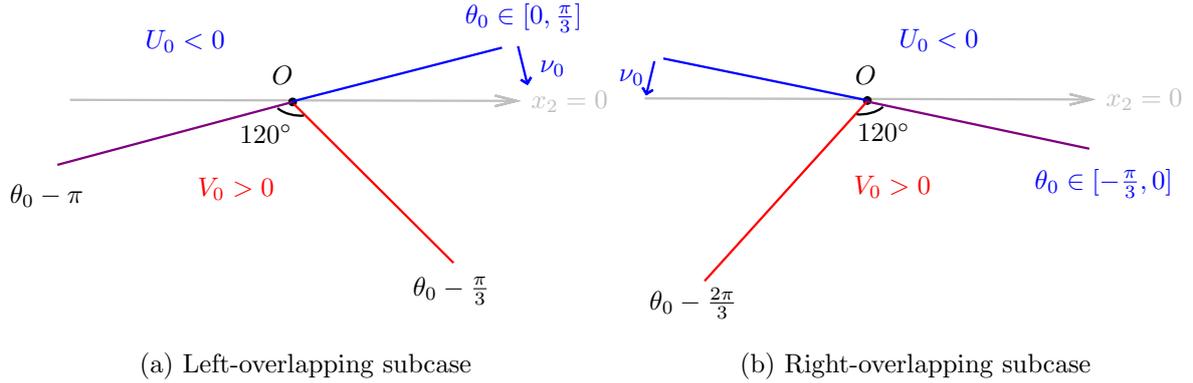}
		\caption{The overlapping case when $\lambda>0$.}
		\label{corner1-2}
	\end{figure}
	
	Notice that we only have $\chi_0^+=0$ in $\{u_0\leq0\}^{\circ}\cap\{x_2\neq0\}\cap\{x\cdot\nu_0\geq\delta\}$. Now take $\delta\rightarrow0+$, the strong convergence of $\chi_{\{u_k^+>0\}}$ to $\chi_0^+$ in $L_{\rm{loc}}^1(\mathbb{R}^2\cap\{x_2<0\})$ gives that $\int_{\{u_0\leq0\}\cap\{x\cdot\nu_0>0\}}\chi_0^+ dx=0$. Hence we can deduce that $M_+(0+)=\int_{B_1}x_2^- \chi_{\{x:\Theta<\theta<\Theta+\frac{2\pi}{3}\}} dx$, which can be computed explicitly depending only on $\theta_0$. 
	See Figure \ref{corner1-2}.
	
	Consider now the case $u_0^+\equiv0$. It follows from (\ref{eq1}) that
	$$0=-\int_{\mathbb{R}^2}\left( \phi_2 + x_2div\bm\phi \right)\chi_0^+ dx = \int_{\mathbb{R}^2} x_2\bm\phi\cdot\nabla \chi_0^+ dx$$
	for $\bm\phi(x)\in W_0^{1,2}(\mathbb{R}^2\cap\{x\cdot\nu_0\geq\delta\};\mathbb{R}^2)$, which yields that $\chi_0^+$ is a constant in $\{x_2<0\}\cap\{x\cdot\nu_0\geq\delta\}$. Again take $\delta\rightarrow0+$ and the strong convergence of $\chi_{\{u_k^+>0\}}$ to $\chi_0^+$ in $L_{\rm{loc}}^1$ gives that $\int_{\{x_2<0\}\cap\{x\cdot\nu_0>0\}}\chi_0^+ dx$ is a constant. Its value may be either $0$ in which case $M_+(0+)=0$, or $1$ in which case $M_+(0+)=\int_{B_1}x_2^- \chi_{\{x\cdot\nu_0>0\}} dx=\frac{\cos\theta_0+1}{2}$.
	
\end{proof}

\begin{rem}
	In this situation, the range of the unique normal $\nu_0$ and the deflection angle of the negative-phase free boundary $\theta_0$ are not determined in Lemma \ref{lem1} until the case $\{u_0>0\}\neq\varnothing$, since $u_0^+$ and $u_0^-$ have disjoint support. Moreover, the subcase $\partial\{u_0^+>0\}\cap\partial\{u_0^->0\}=\{0\}$ implies that $\theta_0\in(-\frac{\pi}{6},\frac{\pi}{6})$, and the subcase $\left( \partial\{u_0^+>0\}\cap\partial\{u_0^->0\} \right)\backslash \{0\} \neq\varnothing$ implies that $\theta_0\in[-\frac{\pi}{3},\frac{\pi}{3}]$. For the case $\{u_0>0\}=\varnothing$, the range of $\theta_0$ cannot be determined yet, which will be discussed later.
\end{rem}

\begin{rem}
	When $\{u_0>0\}\neq\varnothing$, the detached case and the overlapping case are divided by the position of $\partial\{u_0>0\}$ that whether it coincides with $\{x
	\cdot\nu_0=0\}$ or not. Namely, if $\partial\{u_0>0\}$ and $\partial\{u_0<0\}$ contact only at the point $x^0$, then it falls into the detached case. If $\partial\{u_0>0\}$ and $\partial\{u_0<0\}$ are overlapped in a segment, then it belongs to the overlapping case. Nevertheless, it is noticeable that in Theorem A-1 and Theorem A-2, these two cases are distinguished by the position of $\partial\{u>0\}$ and $\partial\{u<0\}$. In fact, we have already known the local Hausdorff convergence from $\partial\{u<0\}$ to $\partial\{u_0<0\}$ by \cite{V23}, Section 6, while $u_k^+$ possesses some non-degeneracy since $\{u_0>0\}\neq\varnothing$, namely, there is a strictly increasing function
	$$\omega: [0,+\infty)\rightarrow[0,+\infty),$$
	such that $\omega(0)=0$ and
	$$\Vert u_k^+ \Vert_{L^\infty(B_s(Y^0))}\geq\omega(s) \quad 
	\text{for every} \quad Y^0\in\overline{\{u_k^+>0\}}\cap B_{s_0}\quad \text{and} \quad s\in(0,s_0/2) $$
	for some constant $s_0>0$. The argument in Section 6.2, \cite{V23} gives the Hausdorff convergence of the free boundary $\partial\{u>0\}$ to $\partial\{u_0>0\}$. Hence the two cases can be classified into the detached case when $\left( \partial\{u>0\}\cap\partial\{u<0\}\cap B_r(x^0) \right)\backslash\{x^0\}=\varnothing$ for any small $r$, and the overlapping case when $\left( \partial\{u>0\}\cap\partial\{u<0\}\cap B_r(x^0) \right)\backslash\{x^0\}\neq\varnothing$ for any small $r$.
\end{rem}

\begin{rem} \label{bv}
	For any weak solutions of the problem (\ref{eq1.1}) in $D$, if we assume the Bernstein estimate that $|\nabla u^+ (x)|^2 \leq C x_2^-$ locally in $D\cap\{x_2<0\}$, then $\chi_{\{u>0\}}$ is locally a BV function in $D\cap\{x_2<0\}$. Moreover, the total variation measure $|\nabla \chi_{\{u>0\}}|$ satisfies
	$$\int_{B_r(x^0)} \sqrt{-x_2} d|\nabla \chi_{\{u>0\}}|\leq C_0 r^{3/2}$$
	for any $x^0\in\partial\{u>0\}\cap D$ and $r$ small enough. Indeed, the Bernstein estimate gives
	\begin{equation*}
	C_0 r^{3/2} \geq \int_{\partial B_r(x^0)} \nabla u^+ \cdot\nu d\mathcal{S} = \Delta u^+(B_r(x^0)) \geq \int_{B_r(x^0)\cap\partial_{red}\{u>0\}} \sqrt{-x_2} d\mathcal{S},
	\end{equation*}
	as required, where $\Delta u^+(B_r(x^0))$ denotes the Radon measure $\Delta u^+$ on $B_r(x^0)$
\end{rem}

With Remark \ref{bv} at hand, we can get the following proposition, which gives a description of the asymptotic singular profile of the free boundary $\partial\{u>0\}$ as it approaches $x^0$ in each case of density.

\begin{prop}\label{curve}
	Let $u$ be a weak solution of (\ref{eq1.1}) in $D$ with $\lambda>0$ satisfying Assumption \ref{assume}, $x^0\in S^u_+$, $\nu_0=(\nu_1^0,\nu_2^0)$ be the normal vector of $\partial\{u<0\}$ at $x^0$, $\theta_0$ be the asymptotic deflection angle of $\partial\{u<0\}$ at $x^0$. Suppose in addition that $\partial\{u>0\}$ is in a neighborhood of $x^0$ an injective curve $\sigma$: $(-1,1)\rightarrow\mathbb{R}^2$ such that $\sigma(t)=(\sigma_1(t),\sigma_2(t))$ and $\sigma(0)=x^0$. Then the following hold:
	
	(i-a) (Detached case) If $\Theta=-\frac{5\pi}{6}$ and $M_+(0+)=\frac{\sqrt{3}}2$ , then (see Figure \ref{corner2-1}) $\sigma_1(t)\neq x_1^0$ in $(-t_0,t_0)\backslash\{0\}$ for some $0<t_0<1$ and, depending on the parametrization, either
	$$\lim_{t\rightarrow0+}\frac{\sigma_2(t)}{\sigma_1(t)-x_1^0}=\frac{1}{\sqrt{3}} \quad \text{and} \quad \lim_{t\rightarrow0-}\frac{\sigma_2(t)}{\sigma_1(t)-x_1^0}=-\frac{1}{\sqrt{3}},$$
	or
	$$\lim_{t\rightarrow0+}\frac{\sigma_2(t)}{\sigma_1(t)-x_1^0}=-\frac{1}{\sqrt{3}} \quad \text{and} \quad \lim_{t\rightarrow0-}\frac{\sigma_2(t)}{\sigma_1(t)-x_1^0}=\frac{1}{\sqrt{3}}.$$
	
	\begin{figure}[!h] 
		\includegraphics[width=90mm]{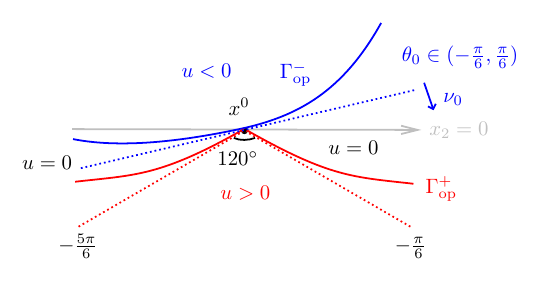}
		\caption{Stokes corner in detached case when $\lambda>0$.}
		\label{corner2-1}
	\end{figure}
	
	(i-b) (Left-overlapping case) If $\Theta=\theta_0-\pi$ and $M_+(0+)=-\frac{\sqrt{3}}2\sin(\theta_0-\frac{2\pi}{3})$, then (see Figure \ref{corner2-2} (a)) $\sigma_1(t)\neq x_1^0$ in $(-t_0,t_0)\backslash\{0\}$ for some $0<t_0<1$ and, depending on the parametrization, either
	$$\lim_{t\rightarrow0+}\frac{\sigma_2(t)}{\sigma_1(t)-x_1^0}=\tan\theta_0 \quad \text{and} \quad \lim_{t\rightarrow0-}\frac{\sigma_2(t)}{\sigma_1(t)-x_1^0}=\tan\left(\theta_0-\frac{\pi}{3}\right),$$
	or
	$$\lim_{t\rightarrow0+}\frac{\sigma_2(t)}{\sigma_1(t)-x_1^0}=\tan\left(\theta_0-\frac{\pi}{3}\right) \quad \text{and} \quad \lim_{t\rightarrow0-}\frac{\sigma_2(t)}{\sigma_1(t)-x_1^0}=\tan\theta_0.$$
	
	(i-c) (Right-overlapping case) If $\Theta=\theta_0-\frac{2\pi}{3}$ and $M_+(0+)=-\frac{\sqrt{3}}2\sin(\theta_0-\frac{\pi}{3})$, then (see Figure \ref{corner2-2} (b)) $\sigma_1(t)\neq x_1^0$ in $(-t_0,t_0)\backslash\{0\}$ for some $0<t_0<1$ and, depending on the parametrization, either
	$$\lim_{t\rightarrow0+}\frac{\sigma_2(t)}{\sigma_1(t)-x_1^0}=\tan(\theta_0-\frac{2\pi}{3}) \quad \text{and} \quad \lim_{t\rightarrow0-}\frac{\sigma_2(t)}{\sigma_1(t)-x_1^0}=\tan\theta_0,$$
	or
	$$\lim_{t\rightarrow0+}\frac{\sigma_2(t)}{\sigma_1(t)-x_1^0}=\tan\theta_0 \quad \text{and} \quad \lim_{t\rightarrow0-}\frac{\sigma_2(t)}{\sigma_1(t)-x_1^0}=\tan(\theta_0-\frac{2\pi}{3}).$$
	
	\begin{figure}[!h] 
		\includegraphics[width=160mm]{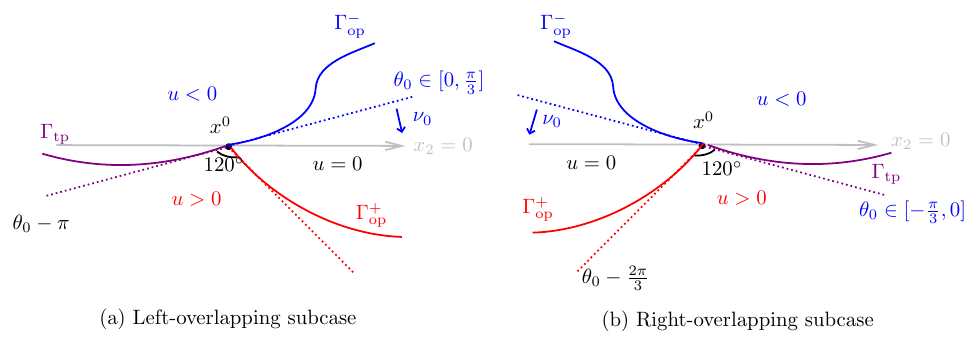}
		\caption{Stokes-type corner in overlapping case when $\lambda>0$.}
		\label{corner2-2}
	\end{figure}
	
	(ii) If $M_+(0+)=\frac{\cos\theta_0+1}{2}$, then (see Figure \ref{horizontal}) $\nu_0=(0,-1)$, $\theta_0=0$ and in particular, $M_+(0+)=1$. Moreover, $\sigma_1(t)\neq x_1^0$ in $(-t_0,t_0)\backslash\{0\}$ for some $0<t_0<1$, $\sigma_1-x_1^0$ changes sign at $t=0$ and
	$$\lim_{t\rightarrow0}\frac{\sigma_2(t)}{\sigma_1(t)-x_1^0}=0.$$
	
	\begin{figure}[!h] 
		\includegraphics[width=95mm]{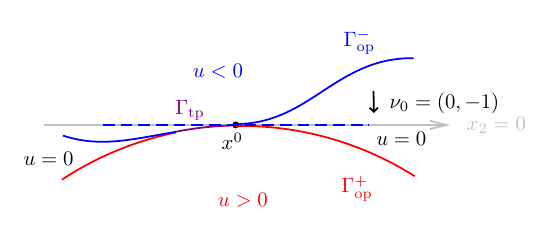}
		\caption{Horizontally flat when $\lambda>0$.}
		\label{horizontal}
	\end{figure}
	
	(iii) If $M_+(0+)=0$, then (see Figure \ref{cusp}) $\sigma_1(t)\neq x_1^0$ in $(-t_0,t_0)\backslash\{0\}$ for some $0<t_0<1$, $\sigma_1-x_1^0$ does not change sign at $t=0$, and
	$$\lim_{t\rightarrow0}\frac{\sigma_2(t)}{\sigma_1(t)-x_1^0}=0.$$
	
	\begin{figure}[!h] 
		\includegraphics[width=160mm]{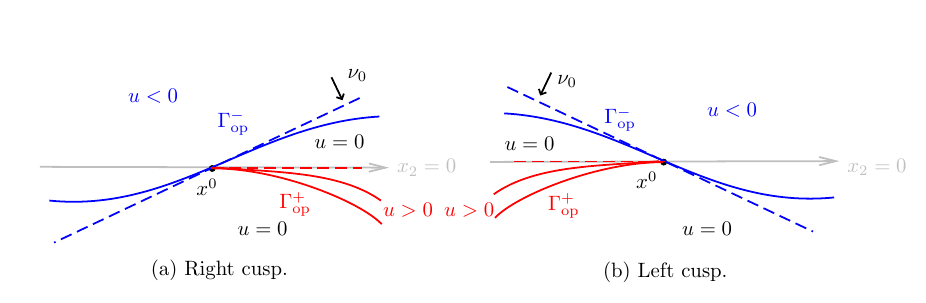}
		\caption{Cusp when $\lambda>0$.}
		\label{cusp}
	\end{figure}
\end{prop}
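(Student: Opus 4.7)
The plan is to convert the blow-up information in Proposition \ref{density} (strong $W^{1,2}_{\rm loc}$ convergence of $u_k^+$ together with $L^1_{\rm loc}$ convergence of the characteristic functions $\chi_{\{u_k^+>0\}}\to\chi_0^+$) into pointwise tangent information about the injective parametrization $\sigma$ at $x^0$. Letting $\sigma_k(t):=r_k^{-1}(\sigma(t)-x^0)$, the rescaled curves parametrize a portion of $\partial\{u_k^+>0\}$, so the limit behavior of $\sigma$ as $t\to 0$ is governed by the geometry of $\partial\{u_0>0\}$ (or the measure-theoretic boundary of $\{\chi_0^+=1\}$ when $u_0^+\equiv 0$).

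In the nontrivial cases (i-a)–(i-c), the set $\{u_0>0\}$ is an explicit open cone of aperture $2\pi/3$ whose boundary consists of two half-lines through the origin at angles $\Theta$ and $\Theta+2\pi/3$. The nondegeneracy of $u_0^+$ on its positivity set promotes the $W^{1,2}_{\rm loc}$ convergence to Hausdorff convergence of the free boundaries, as invoked in the remark following Proposition \ref{density} via \cite{V23}, Section~6.2. Hence the two halves $\sigma_k|_{(0,t_0)}$ and $\sigma_k|_{(-t_0,0)}$ must accumulate on those two rays, and injectivity of $\sigma$ forces one half onto each ray. In (i-a) the opening cone $\{-5\pi/6<\theta<-\pi/6\}$ is symmetric about $-\pi/2$, yielding the slopes $\pm 1/\sqrt{3}$; in (i-b), (i-c) the relevant ray directions produce the pairs $\tan\theta_0$ with $\tan(\theta_0-\pi/3)$ and $\tan(\theta_0-2\pi/3)$ with $\tan\theta_0$. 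The angle constraints $\theta_0\in(-\pi/6,\pi/6)$ in the detached case and $\theta_0\in[-\pi/3,\pi/3]$ in the overlapping cases make the rays non-vertical, so $\sigma_1(t)-x_1^0\neq 0$ near $t=0$ and the tangent quotient $\sigma_2(t)/(\sigma_1(t)-x_1^0)$ has the claimed limits; uniqueness of the blow-up limit upgrades subsequential convergence to the full limit $t\to 0\pm$.

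For the trivial blow-up cases I would use only $L^1$-convergence of characteristic functions together with the BV bound in Remark \ref{bv}. In (ii), $\chi_0^+\equiv 1$ on a half-plane $\{x\cdot\nu_0>0\}$; but $u\leq 0$ in $\{x_2\geq 0\}$ forces this half-plane to lie in $\{x_2<0\}$, which is only possible if $\nu_0=(0,-1)$, $\theta_0=0$ and $M_+(0+)=1$. The rescaled positivity set $r_k^{-1}(\{u>0\}-x^0)$ then fills out $\{x_2<0\}$ in $L^1_{\rm loc}$; combined with the perimeter control from Remark \ref{bv} (which rules out oscillation of the boundary) this forces $\sigma_2(t)/(\sigma_1(t)-x_1^0)\to 0$ and forces $\sigma_1-x_1^0$ to change sign at $t=0$, since $\{u>0\}$ meets both sides of the vertical line through $x^0$. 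In (iii), $\chi_0^+\equiv 0$ means $\{u>0\}$ shrinks into a cusp at $x^0$; the same BV control confines $\sigma$ to a single side of the vertical line through $x^0$, so $\sigma_1-x_1^0$ is eventually of one sign and the tangent ratio again tends to $0$.

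The main obstacle is the passage from set convergence to the pointwise tangent behavior of the parametrized curve $\sigma$, and in particular excluding spiraling or oscillations as $t\to 0$. In (i-a)–(i-c) this is enforced by Hausdorff convergence of the free boundary together with the discreteness of the two limit rays, but the delicate point is showing $\sigma_1-x_1^0\neq 0$ in a punctured neighborhood of $0$, which relies on non-verticality of the limit rays under the constraints on $\theta_0$. In (ii)–(iii), where Hausdorff convergence of $\partial\{u>0\}$ is not available because $u_0^+\equiv 0$, the $BV$ estimate of Remark \ref{bv}—which bounds the rescaled perimeter uniformly—is the crucial ingredient ruling out wild geometric behavior of $\sigma$ near $x^0$ and thereby producing the horizontal tangent limits.
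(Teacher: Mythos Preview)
Your plan for the nontrivial cases (i-a)--(i-c) is essentially sound and runs parallel to the paper's proof, though via a slightly different mechanism: you appeal to the Hausdorff convergence of $\partial\{u_k^+>0\}$ to $\partial\{u_0>0\}$ (available from the nondegeneracy remark following Proposition~\ref{density}), whereas the paper argues directly with the Radon measures $\Delta u_k^+$. Both routes give that the accumulation directions of $\sigma(t)-x^0$ lie in the finite set of boundary rays of the limit cone, and the rest (connectedness of the accumulation set, assigning one half of the curve to each ray, non-verticality of the rays under the constraints on $\theta_0$) is handled the same way.

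The genuine gap is in your treatment of (ii) and (iii). You correctly flag that Hausdorff convergence of $\partial\{u_k^+>0\}$ is unavailable when $u_0^+\equiv 0$, but the substitute you propose---the BV/perimeter bound of Remark~\ref{bv} together with $L^1$-convergence of $\chi_{\{u_k^+>0\}}$---does not give what you need. The BV estimate yields only a \emph{uniform upper bound} on the weighted perimeter $\int_{B_1}\sqrt{-x_2}\,d|\nabla\chi_{\{u_k^+>0\}}|$; it does not force this quantity to zero, and so it does not preclude the rescaled curve from accumulating at some angle $\theta^*\in(-\pi,0)$. Likewise, $L^1$-convergence of $\chi_{\{u_k^+>0\}}$ to $0$ or $1$ says nothing about where the (measure-zero) boundary curve sits.

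The missing idea is to use the \emph{same} measure-theoretic tool that works in case (i): since $u_k^+\to u_0^+\equiv 0$ strongly in $W^{1,2}_{\rm loc}$, the nonnegative Radon measures $\Delta u_k^+$ satisfy $\Delta u_k^+(\tilde B)\to\Delta u_0^+(\tilde B)=0$ for any ball $\tilde B\Subset\{x_2<0\}$. On the other hand, the weak-solution free boundary condition gives the lower bound
\[
\Delta u_k^+ \;\geq\; \sqrt{-x_2}\,\mathcal{H}^1\lfloor_{\partial\{u_k^+>0\}},
\]
so if $\arg(\sigma(t_m)-x^0)\to\theta^*\in(-\pi,0)$ along some sequence $t_m\to 0$, then $\Delta u_k^+(\tilde B)\geq c(\theta^*)>0$ for the corresponding rescalings, contradicting the convergence to zero. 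This forces the accumulation set of $\arg(\sigma(t)-x^0)$ into $\{-\pi,0\}$; a connectedness argument then gives existence of the one-sided limits $l_\pm\in\{-\pi,0\}$, and whether $l_+=l_-$ or not distinguishes $M_+(0+)=0$ from $M_+(0+)=1$ (the latter also forcing $\nu_0=(0,-1)$ since $\chi_0^+=1$ on all of $\{x_2<0\}$ while $\chi_0^+=0$ on $\{x\cdot\nu_0<0\}$). This single argument covers all cases uniformly; your BV-based proposal would need to be replaced by it.
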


\begin{proof}
	We may assume that $x_1^0=0$. Define $\mathcal{L}_\pm=\{\theta^*\in[0,\pi]:\exists t_m\rightarrow0^\pm \ \ \text{s.t.} \ \  \arg\sigma(t_m)\rightarrow\theta^* \ \ \text{as}\ \ m\rightarrow\infty\}$, where $\arg y$ denotes the complex argument for $y\in\mathbb{R}^2$.
	
	\emph{Step 1.} We first prove that both $\mathcal{L}_\pm$ are subsets of $\left\{-\pi,\Theta,\Theta+\frac{2\pi}{3},0 \right\}$, where $\Theta\in\left\{-\frac{5\pi}{6}, \theta_0-\pi, \theta_0-\frac{2\pi}{3} \right\}$.
	
	Indeed, suppose towards a contradiction that there exists a sequence $0\neq t_k\rightarrow0+$ as $k\rightarrow\infty$ such that $\arg \sigma(t_k)\rightarrow\theta^*\in(\mathcal{L}_+\cup\mathcal{L}_-)\backslash\{-\pi,\Theta,\Theta+\frac{2\pi}{3},0\}$. Let $r_k:=|\sigma(t_k)|$ and let
	$$u_k^+(x)=\frac{u^+(x^0+r_kx)}{r_k^{3/2}}.$$
	Denote for simplicity that $\theta_1=\Theta+2\pi/3$. For each $\rho>0$ such that $\tilde{B}:=B_{\rho}(\cos\theta^*,\sin\theta^*)$ satisfies
	$$\varnothing=\tilde{B}\cap\left( \{(x,0):x\in\mathbb{R}\} \cup \{ (x,x\tan\theta_i), \ i=1,2, \ x\in\mathbb{R} \} \right),$$
	we infer from the formula for the unique blow-up $u_0$ in each case (see Proposition \ref{density}) that the nonnegative Radon measure
	$$\Delta u_k^+(\tilde{B})\rightarrow\Delta u_0^+(\tilde{B})=0 \quad \text{as} \quad r_k\rightarrow0+.$$
	On the other hand,
	$$\Delta u_k^+ = |\nabla u_k^+| \mathcal{S}\lfloor_{\partial\{u_k>0\}} \geq \sqrt{-x_2}\mathcal{S}\lfloor_{\partial\{u_k>0\}},$$
	where $\mathcal{S}\lfloor_{\partial\{u_k>0\}}$ means the 1-dimensional Hausdorff measure $\mathcal{S}$ restricted to the set $\partial\{u_k>0\}$, and implies that 
	$$0 \leftarrow \Delta u_k^+(\tilde{B})\geq c(\theta^*,\rho)$$
	since $\tilde{B}\cap\partial\{u_k>0\}$ contains a curve of length at least $2\rho-o(1)$, where $c(\theta^*,\rho)>0$, a contradiction. Thus the property claimed in Step 1 holds.
	
	\emph{Step 2.} It follows that $\sigma_1(t)\neq0$ for all sufficiently small $t\neq0$. Now a continuity argument yields that both $\mathcal{L}_+$ and $\mathcal{L}_-$ are connected sets. Consequently
	$$l_+:=\lim_{t\rightarrow0+} \arg\sigma(t)$$
	exists and has to be contained in the set $\{-\pi,\Theta,\Theta+\frac{2\pi}{3},0\}$, and
	$$l_-:=\lim_{t\rightarrow0-} \arg\sigma(t)$$
	exists and has to be contained in the set $\{\pi,\Theta,\Theta+\frac{2\pi}{3},2\pi\}$.
	
	\emph{Step 3.} In the case $M_+(0+)=\int_{B_1}x_2^- \chi_{\{x:\Theta<\theta<\Theta+\frac{2\pi}{3}\}} dx$, denote for simplicity that $\theta_1=\Theta+2\pi/3$ and we know from the formula for $u_0^+$ that
	$$\Delta u_0^+\left( B_{\sin\theta_i/2}(\cos\theta_i,\sin\theta_i) \right)>0 \quad \text{for} \quad i=1,2.$$
	It follows that the set $\{l_+, l_-\}$ contains both $\Theta$ and $\theta_2$. But then the sets $\{l_+,l_-\}$ and $\{\Theta, \Theta+2\pi/3\}$ must be equal, and the fact that $u\leq0$ in $B_R(x^0)\cap\{x_2=0\}$ for some small $R>0$ implies the cases (i-a)(i-b)(i-c) of the theorem.
	
	\emph{Step 4.} In the case $M_+(0+)\in\left\{ 0, \ \frac{\cos\theta_0+1}{2} \right\}$, we have that
	$$\Delta u_0^+\left( B_{\sin\theta/2}(\cos\theta,\sin\theta) \right)=0 \quad \text{for} \quad \theta=\Theta,\theta_1,$$
	which implies that $l_\pm\notin\{\Theta, \Theta+2\pi/3\}$. Thus $l_\pm\in\{-\pi,0\}$. Using the fact that $u\leq0$ on $B_R(x^0)\cap\{x_2=0\}$, we obtain in the case $l_+\neq l_-$ that $M_+(0+)=\int_{B_1} x_2^- dx =1$ for $\nu_0=(0,-1)$, and in the case $l_+=l_-$ that $M_+(0+)=0$. Together, the last two properties give the conclusion for case (ii) and case (iii) of the proposition.
\end{proof}

\subsection{Degenerate stagnation points}

In this section we define the degenerate stagnation points in the sense that the trivial blow-up limit $u^0\equiv0$. Under some additional assumptions, we will prove that the stagnation point $x^0$ can not be degenerate.

\begin{define}
	Let $u$ be a variational solution of (\ref{eq1.1}) in $D$ with $\lambda>0$ and $x^0\in S^u_+$. The set $D^u$ of degenerate stagnation points is defined by
	$$D^u:= \left\{ x^0\in S^u_+ \ \Bigg| \ \frac{u^+(x^0+rx)}{r^{3/2}}\rightarrow0 \quad \text{strongly in $W_{\rm{loc}}^{1,2}(\mathbb{R}^2)$ as $r\rightarrow0+$} \right\}.$$
	Otherwise we say $x^0\in S^u_+$ is non-degenerate.
\end{define}

Notice that Proposition \ref{density} gives alternative characterizations of non-degeneracy and degeneracy in terms of different blow-up limits and weighted densities.

First we consider the degenerate stagnation point whose weighted density $M_+(0+)=0$. The following proposition states that under the assumption of strong Bernstein estimate, we can exclude the points with weighted density $0$.

\begin{prop} \label{prop1}
	Let $u$ be a weak solution of (\ref{eq1.1}) in $D$ with $\lambda>0$ satisfying Assumption \ref{assume} and $x^0\in S^u_+$. Moreover, suppose the strong Bernstein estimate holds that
	$$|\nabla u^+|^2\leq x_2^- \quad \text{in $D$}.$$
	Then $M_+(0+)=0$ implies that $u\equiv0$ in some open ball containing $x^0$. In other words, $M_+(0+)\neq0$.
\end{prop}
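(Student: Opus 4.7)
The plan is to argue by contradiction: assume $u \not\equiv 0$ in every open ball centered at $x^0$, while $M_+(0+)=0$. By Proposition \ref{density}(ii), this trivial blow-up case with vanishing weighted density yields $u_0^+\equiv 0$ and the stronger fact that $\chi_0^+\equiv 0$ in $L^1_{\rm loc}(\mathbb{R}^2\cap\{x_2<0\})$. Combined with the strong Bernstein estimate $|\nabla u^+|^2\leq x_2^-$ (which gives the pointwise bound $|\nabla u_k^+|^2\leq r_k(-x_2)\chi_{\{u_k^+>0\}}$ after rescaling), both the Dirichlet energy and the boundary trace vanish at scale $r^{3/2}$:
\begin{equation*}
\lim_{r\to 0+}r^{-3}\int_{B_r(x^0)}|\nabla u^+|^2\,dx=0,\qquad \lim_{r\to 0+}r^{-4}\int_{\partial B_r(x^0)}(u^+)^2\,d\mathcal{S}=0.
\end{equation*}

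The second step is to extract a quantitative non-degeneracy for $u^+$ that is incompatible with this decay. Since by assumption $u^+\not\equiv 0$ in every ball around $x^0$, we can select $y_k\in\{u>0\}$ with $y_k\to x^0$. Because $u^+$ is harmonic inside $\{u>0\}$ and satisfies $|\nabla u^+|^2=-x_2$ along $\Gamma_{\rm op}^+$ in the weak/viscosity sense (see Proposition \ref{prop0} and Appendix A), an Alt--Caffarelli-type lower bound can be established by comparison with the one-phase Stokes corner $V_0$ on pieces of $\{u>0\}$ not touching $\{u<0\}$. In particular, $V_0$ is the extremal profile compatible with the strong Bernstein upper bound $|\nabla u^+|^2\leq -x_2$, so comparing $u^+$ with suitably translated and scaled copies of $V_0$ inside connected components of $\{u>0\}$ accumulating at $x^0$ produces a density estimate of the form
\begin{equation*}
r^{-3}\int_{B_r(x^0)}|\nabla u^+|^2\,dx\;\geq\; c>0\qquad\text{for a sequence }r=r_k\to 0+.
\end{equation*}

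The third step is to close the contradiction. The lower bound from Step 2, combined with the vanishing established in Step 1, is impossible, and hence $u^+\equiv 0$ in some $B_r(x^0)$. Since $x^0$ is a branch stagnation point so that $u^-$ cannot fill a neighborhood of $x^0$ by itself, the structure of the free boundary problem near $x^0$ then forces $u\equiv 0$ in a smaller ball, completing the argument.

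The main obstacle lies in Step 2: the classical Alt--Caffarelli non-degeneracy must be adapted to our two-phase, gravity-driven setting where $x^0$ is a branch point and the negative phase $u^-$ may share free boundary pieces with $u^+$. The strong Bernstein hypothesis (stricter than Assumption \ref{assume}) is essential here because it provides the sharp upper bound matching the free boundary condition $|\nabla u^+|^2=-x_2$ with equality on $\Gamma_{\rm op}^+$; without it, the comparison with $V_0$ (and hence the density lower bound) could fail. Handling the contribution of $\Gamma_{\rm tp}$ at $x^0$ requires using the disjoint supports of the blow-up limits $u_0^\pm$ to localize inside connected components of $\{u>0\}$ whose boundaries are eventually purely one-phase, where the standard comparison tools apply.
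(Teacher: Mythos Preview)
Your Step 2 is not a proof but a wish: you assert an Alt--Caffarelli-type density lower bound by ``comparison with translated and scaled copies of $V_0$'', yet you give no mechanism for fitting a Stokes corner inside an arbitrary connected component of $\{u>0\}$ near a degenerate stagnation point. Standard non-degeneracy arguments rely on radial subsolutions built from a \emph{nonvanishing} gradient constraint $|\nabla u|\ge c>0$; here $|\nabla u^+|\sim\sqrt{-x_2}$ degenerates at $x^0$, so that machinery does not apply, and the comparison you describe would essentially amount to proving the blow-up is $V_0$, which contradicts the hypothesis $u_0^+\equiv 0$. You recognize the gap yourself, but as written the argument does not close.

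The paper's proof avoids comparison entirely and uses a short flux balance. Since $u_0^+\equiv0$, one has $\Delta u_k^+(B_2)\to 0$ and hence $\int_{B_2\cap\partial_{\mathrm{red}}\{u_k^+>0\}}\sqrt{-x_2}\,d\mathcal S\to 0$. On the other hand, by the maximum principle, a connected component $V_k$ of $\{u_k>0\}$ touching the origin must exit the box $A=(-1,1)\times(-1,0)$ through $\partial A$. If $V_k\cap\partial A$ stays away from $\{x_2=0\}$, this already contradicts the previous limit. If instead $\max\{-x_2: x\in V_k\cap\partial A\}\to 0$, apply the divergence theorem to the harmonic function $u_k^+$ on $V_k\cap A$: the outward flux through $V_k\cap\partial A$ is bounded above by $\int_{V_k\cap\partial A}\sqrt{-x_2}\,d\mathcal S$ (strong Bernstein), while the inward flux through $A\cap\partial V_k$ is bounded below by $\int_{A\cap\partial V_k}\sqrt{-x_2}\,d\mathcal S$ (the weak-solution free boundary condition $|\nabla u^+|^2\ge -x_2$ on $\partial\{u>0\}$). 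Since $V_k$ connects $0$ to $\partial A$, the free boundary $A\cap\partial V_k$ has length of order one, giving $\int_{A\cap\partial V_k}\sqrt{-x_2}\,d\mathcal S\gtrsim\sqrt{h_k}$, whereas the exit window has area $\lesssim h_k$ and hence $\int_{V_k\cap\partial A}\sqrt{-x_2}\,d\mathcal S\lesssim h_k^{3/2}$; this is impossible for $h_k$ small. This is the missing idea you need for Step 2, and it uses the strong Bernstein estimate exactly once, to cap the exit flux.
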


\begin{proof}
	Suppose towards a contradiction that $x^0\in S^u_+$ and a blow-up sequence
	$$u_k^+(x):=\frac{u^+(x^0+r_kx)}{r_k^{3/2}}$$
	converging weakly in $W_{\rm{loc}}^{1,2}(\mathbb{R}^2)$ to a blow-up limit $u_0^+$ as in Lemma \ref{lem2}. Then $u_0^+\equiv0$ in $\mathbb{R}^2$. Consequently, recall that $\Delta u^+$ is a non-negative Radon measure in $B_R(x^0)\Subset D$,
	\begin{equation} \label{5.1}
	0 \leftarrow \Delta u_k^+(B_2)\geq\int_{B_2\cap\partial_{red}\{u_k^+>0\}} \sqrt{-x_2} d\mathcal{S} \quad \text{as} \quad r_k\rightarrow0+.
	\end{equation}
	On the other hand, construct a box $A=(-1,1)\times(-1,0)$, then there is at least one connected component $V_k$ of $\{u_k>0\}$ touching the origin and containing, by the maximum principle, a point $x^k=(x_1^k,x_2^k)\in\partial A$. If
	$$\max \{-x_2: x=(x_1,x_2)\in V_k\cap\partial A\} \ \text{does not converge to} \ 0 \ \text{as} \ r_k\rightarrow0+,$$
	we immediately obtain a contradiction to (\ref{5.1}). If
	$$\max \{-x_2: x=(x_1,x_2)\in V_k\cap\partial A\}\rightarrow0+,$$
	we use the free boundary condition as well as $|\nabla u^+|^2\leq x_2^-$ to obtain
	$$0=\Delta u_k^+(V_k\cap A)\leq\int_{V_k\cap\partial A}\sqrt{-x_2} d\mathcal{S} + \int_{A\cap\partial_{red}V_k}-\sqrt{-x_2} d\mathcal{S},$$
	namely
	$$\int_{A\cap\partial_{red}V_k}\sqrt{-x_2} d\mathcal{S} \leq \int_{V_k\cap\partial A}\sqrt{-x_2} d\mathcal{S},$$
	which is impossible.
\end{proof}

Now we focus on the degenerate stagnation point with nontrivial weighted density but trivial blow-up limit.

\begin{define}
	Let $u$ be a variational solution of (\ref{eq1.1}) in $D$ with $\lambda>0$ and $x^0\in S^u_+$. Let $\nu_0$ be a unit vector determined in Lemma \ref{lem1}. We define
	$$\Sigma^u:=\left\{x^0\in S^u_+: \ M_+(0+)=\frac{\cos\theta_0+1}{2} \right\}.$$
\end{define}
Notice that if $x^0\in\Sigma^u$, then the blow-up limit of $u_0^+\equiv0$.

We first present a result of Oddson \cite{O68}, which is a significant tool in excluding the horizontal points.

\begin{lem} \label{Oddson} (Main Theorem in \cite{O68})
	Let $r_0>1$, $\mu>1$ and
	$$G:=\left\{ (\rho\cos\theta,\rho\sin\theta): 0<\rho<r_0, |\theta|<\pi/(2\mu) \right\}.$$
	Let $w\in C^2(G)\cap C(\bar{G})$ be a superharmonic function in $G$, such that $w(0,0)=0$ and $w>0$ in $\bar{G}\backslash\{0\}$. Then there exists $\kappa>0$ such that
	$$w(\rho\cos\theta,\rho\sin\theta)\geq\kappa\rho^{\mu}\cos\mu\theta \quad \text{in} \quad G,$$
	and in particular,
	$$w(\rho\cos\theta,\rho\sin\theta)\Big|_{\theta=0} = w(\rho,0)\geq\kappa\rho^{\mu} \quad \text{for all} \quad \rho\in(0,r_0).$$
\end{lem}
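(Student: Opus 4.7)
The plan is to prove this by comparison with the explicit harmonic function $v(\rho,\theta) := \rho^\mu\cos(\mu\theta)$, which is precisely $\operatorname{Re}(z^\mu)$ written in polar form on the branch $|\theta| < \pi/(2\mu)$. Since $z^\mu$ is holomorphic on that sector (with the standard branch), $v$ is harmonic in $G$, and the choice of opening angle $\pi/\mu$ is exactly what makes $v$ vanish on the two straight boundary rays $\theta = \pm\pi/(2\mu)$ while being strictly positive in the open sector and at the origin equal to $0$.

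The strategy is to form the comparison function $W := w - \kappa v$ for a suitably small constant $\kappa > 0$, and show $W \geq 0$ in $G$ via the minimum principle for superharmonic functions. Note $W$ is superharmonic in $G$ (since $\Delta W = \Delta w - \kappa\Delta v \leq 0$) and continuous on $\bar G$. To fix $\kappa$, first let
\begin{equation*}
m := \min_{|\theta| \leq \pi/(2\mu)} w(r_0\cos\theta, r_0\sin\theta),
\end{equation*}
which is strictly positive because $w > 0$ on $\bar G \setminus \{0\}$ and the outer arc is a compact subset of that set. Then choose $\kappa := m/r_0^\mu$.

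With this choice I would verify $W \geq 0$ on each of the three pieces of $\partial G$: on the two straight rays $\theta = \pm\pi/(2\mu)$ one has $v = 0$ and $w \geq 0$, so $W = w \geq 0$; on the outer arc $\rho = r_0$ one has $v = r_0^\mu\cos(\mu\theta) \leq r_0^\mu$, hence $\kappa v \leq m \leq w$ and $W \geq 0$; at the single point $(0,0)$ both $w$ and $v$ vanish, so $W(0,0)=0$. The minimum principle for bounded-domain superharmonic functions (continuous up to the boundary) then yields $W \geq 0$ throughout $G$, which is exactly the claimed estimate $w(\rho\cos\theta,\rho\sin\theta) \geq \kappa\rho^\mu\cos(\mu\theta)$. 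Evaluating at $\theta = 0$ gives the "in particular" statement.

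The only delicate point is the behaviour at the corner $(0,0)$, where both $w$ and the barrier $v$ degenerate to $0$. This is the step that could in principle spoil a naive application of the minimum principle if one feared a bad interior infimum near the origin; however, since $W$ is continuous up to $\bar G$ and $W(0,0) = 0$, the origin is admissible as a boundary point for the minimum principle and no further argument is needed. A useful sanity check is that the growth rate $\mu$ is sharp: for steeper opening angles $v$ would fail to be subharmonic–compatible because $\cos(\mu\theta)$ would change sign on the rays, and any attempt to enlarge the exponent beyond $\mu$ would contradict the harmonic measure of the sector, so $\mu$ is exactly the critical exponent dictated by the geometry of $G$.
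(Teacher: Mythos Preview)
Your barrier argument is correct and is the standard proof of this classical Oddson-type lower bound: the function $v(\rho,\theta)=\rho^{\mu}\cos(\mu\theta)$ is harmonic in the sector, vanishes on the lateral rays, and your choice $\kappa=m/r_0^{\mu}$ makes $W=w-\kappa v$ nonnegative on all of $\partial G$; the minimum principle for superharmonic functions then gives $W\geq 0$ in $G$. The paper does not supply its own proof of this lemma---it is quoted directly from \cite{O68}---so there is nothing to compare against beyond noting that your comparison-function approach is exactly the classical one.
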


We proceed to show that $\Sigma^u=\varnothing$, and then exclude the possibility of degenerate stagnation points.

\begin{prop}
	Let $u$ be a weak solution of (\ref{eq1.1}) in $D$ with $\lambda>0$ satisfying Assumption \ref{assume}, $\nu_0$ be the normal vector of $\partial\{u<0\}$ as in Lemma \ref{lem1}, and let $x^0\in S^u_+$. Suppose in addition that $\partial\{u>0\}$ is an injective curve in a neighborhood of $x^0$. Then $M_+(0+)\neq \frac{\cos\theta_0+1}{2}$, namely, $\Sigma^u=\varnothing$.
\end{prop}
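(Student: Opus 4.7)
I would argue by contradiction: suppose $x^0 \in \Sigma^u$. By Proposition \ref{curve}(ii), this forces $\nu_0 = (0,-1)$, $\theta_0 = 0$, and the injective curve $\sigma = (\sigma_1,\sigma_2)$ parametrizing $\partial\{u>0\}$ near $x^0$ has horizontal tangent, i.e.\ $\lim_{t\to 0}\sigma_2(t)/(\sigma_1(t)-x_1^0)=0$, with $\sigma_1-x_1^0$ changing sign at $t=0$. Proposition \ref{reg} additionally gives that $\partial\{u<0\}$ is a $C^{1,\alpha}$ graph tangent to $\{x_2=0\}$ at $x^0$, so that $u^- \equiv 0$ in the pointed lower half-ball near $x^0$, outside a thin strip adjacent to $\{x_2=0\}$.

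For any fixed $\delta \in (0,\pi/6)$, introduce the downward sector
$$G_\delta^r := \bigl\{x^0 + \rho(\cos\phi,\sin\phi)\,:\,0<\rho<r,\ \phi \in (-\pi+\delta,\,-\delta)\bigr\},$$
whose two upper rays have slopes $\pm\tan\delta$. Horizontal tangency of both free-boundary curves at $x^0$, combined with the injectivity of $\sigma$, implies that for $r=r_\delta$ small enough neither $\partial\{u>0\}$ nor $\partial\{u<0\}$ meets $\overline{G_\delta^{r_\delta}}\setminus\{x^0\}$; in particular $u^- \equiv 0$ throughout $G_\delta^{r_\delta}$. Hence the open connected set $G_\delta^{r_\delta}$ decomposes as $(G_\delta^{r_\delta}\cap\{u>0\}) \cup (G_\delta^{r_\delta}\cap\{u=0\})$ with the first piece open and the second closed in $G_\delta^{r_\delta}$. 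The strong $L^1_{\rm loc}$ convergence $\chi_{\{u>0\}}(x^0+r\,\cdot)\to 1$ on $B_1\cap\{x_2<0\}$, which is the essential content of the nontrivial-density branch of Proposition \ref{density}(ii), yields $|G_\delta^{r}\cap\{u>0\}|/r^2 \to |G_\delta^{1}|>0$ as $r\to 0$, ruling out $G_\delta^{r_\delta}\subset\{u=0\}$. Connectedness then forces $G_\delta^{r_\delta}\subset\{u>0\}$, and after a harmless radial shrinking we may further arrange $\overline{G_\delta^{r_\delta}}\setminus\{x^0\}\subset\{u>0\}$.

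At this stage $u^+$ is harmonic on $G_\delta^{r_\delta}$, continuous up to the closure, vanishes at $x^0$, and is strictly positive on $\overline{G_\delta^{r_\delta}}\setminus\{x^0\}$. Rotating by $\pi/2$ so the axis of the sector lies along the positive $x_1$-direction, the half-opening equals $\pi/2-\delta = \pi/(2\mu)$ with $\mu := \pi/(\pi-2\delta)$. Since $\delta<\pi/6$ we have $\mu \in (1,3/2)$, so Lemma \ref{Oddson} produces a constant $\kappa>0$ with
$$u^+(x_1^0,\,-\rho) \ \geq\ \kappa\,\rho^{\mu}\qquad\text{for all } \rho \in (0,\,r_\delta/2),$$
after undoing the rotation. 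This contradicts the defining trivial blow-up $r^{-3/2}u^+(x^0+r\,\cdot)\to 0$ locally uniformly in $\mathbb{R}^2$: evaluating at $(0,-1)$ gives $r^{-3/2}u^+(x_1^0,-r)\to 0$, while the Oddson bound yields $r^{-3/2}u^+(x_1^0,-r)\geq \kappa\, r^{\mu-3/2}\to+\infty$ as $r\to 0^+$. This impossibility forces $\Sigma^u=\varnothing$.

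\textbf{Main obstacle.} The delicate step is securing the inclusion $G_\delta^{r_\delta}\subset\{u>0\}$. The horizontal tangency of the two free-boundary curves easily keeps them away from the sector for small $r_\delta$, but eliminating the alternative $G_\delta^{r_\delta}\subset\{u=0\}$ cannot be done by tangent data alone; one must combine this with the full-density information $\chi_0^+\equiv 1$ on $B_1\cap\{x_2<0\}$, which is precisely the hallmark of the nontrivial-density branch of Proposition \ref{density}. Once the inclusion is in hand, the remainder is a clean application of Oddson's lemma with $\mu<3/2$, tuned so that the lower bound $\rho^\mu$ beats the $\rho^{3/2}$ scale of the blow-up.
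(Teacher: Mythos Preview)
Your proposal is correct and follows essentially the same approach as the paper: both argue by contradiction, use Proposition~\ref{curve}(ii) to reduce to $\nu_0=(0,-1)$ with horizontal tangency, secure a downward sector $G$ with $\overline{G}\setminus\{x^0\}\subset\{u>0\}$, and apply Lemma~\ref{Oddson} with exponent $\mu=\pi/(\pi-2\delta)<3/2$ to reach a contradiction. The only minor difference is in the last step---the paper derives the upper bound $u^+(0,x_2)\le C(-x_2)^{3/2}$ from the Bernstein estimate (Assumption~\ref{assume}), whereas you invoke the trivial blow-up $r^{-3/2}u^+(x^0+r\,\cdot)\to 0$ directly; both yield the required contradiction with $\kappa\rho^{\mu}$, and your justification of the sector inclusion (via the full-density information $\chi_0^+\equiv 1$ on $\{x_2<0\}$) is in fact more explicit than the paper's.
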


\begin{proof}
	Suppose $M_+(0+)= \int_{B_1} x_2^-\chi_{\{x\cdot\nu_0>0\}}dx =\frac{\cos\theta_0+1}{2}$. It first follows from Proposition \ref{curve} that $\nu_0=(0,-1)$ and $M_+(0+)= \int_{B_1} x_2^- dx$. Otherwise $u_0>0$ in $\{x\cdot\nu_0<0\}\cap\{x_2<0\}$ by Proposition \ref{curve}, where $u_0<0$ holds too, a contradiction. Hence it yields that there are $r_0\in(0,R)$ and $\alpha\in(0,\pi/6)$ such that $u^+$ is harmonic in $\{u>0\}\cap B_{r_0}$ and $\bar{G}\backslash\{0\}\subset\{u>0\}\cap B_{r_0}$, where $G:=\left\{ (\rho\cos\theta,\rho\sin\theta): 0<\rho<r_0, \alpha<\theta<\pi-\alpha \right\}$. After a suitable rotation, we may apply Lemma \ref{Oddson} to obtain the existence of $\kappa>0$ such that
	$$u(0,x_2)\geq\kappa (-x_2)^{\mu} \quad \text{for all} \quad x_2\in(-r_0,0),$$
	where $\mu:=\pi/(\pi-2\alpha)$, so that $\mu<3/2$. But this contradicts the estimate
	$$u(0,x_2)\leq C(-x_2)^{3/2},$$
	which is a consequence of the Bernstein estimate in Assumption \ref{assume}.
\end{proof}

\subsection{The frequency formula}

In this section we replace the assumption of continuous injective curve by a weaker condition that $\{u=0\}$ has locally finite many connected components. We proceed by means of the frequency formula, allowing a blow-up limit analysis at degenerate stagnation points, where the scaling of the solution is different from the invariant scaling of the equation, and leads in combination with the result of concentration compactness in \cite{EM94}. The root of this formula is the classical frequency formula for Q-valued harmonic functions of Almgren in \cite{A00}, and it is successfully applied to investigate the singular profiles for one-phase water waves in \cite{DHP23}\cite{DY23}\cite{DY24}\cite{DY}\cite{VW11}\cite{VW12} and \cite{VW14}.

Before we introduce the frequency formula, there is an important observation that if $\nu_0\neq(0,-1)$, then the set $\Sigma^u=\varnothing$. In other words, we can verify the angle $\theta_0=0$ for $x^0\in\Sigma^u$.

\begin{lem}
	Let $u$ be a variational solution of (\ref{eq1.1}) in $D$ with $\lambda>0$ satisfying Assumption \ref{assume}, $x^0\in S^u_+$ and $\nu_0$ be the normal vector of $\partial\{u<0\}$ as in Lemma \ref{lem1}, and $M_+(0+)=\int_{B_1} x_2^-\chi_{\{x\cdot\nu_0>0\}}dx=\frac{\cos\theta_0+1}{2}$. Then $\nu_0=(0,-1)$ and thus $M_+(0+)=\int_{B_1} x_2^- dx =1$.
\end{lem}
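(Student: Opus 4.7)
The plan is to argue by contradiction: assume $\nu_0\neq(0,-1)$, equivalently $\theta_0\neq 0$, and derive a contradiction. By the reflection symmetry $x_1\mapsto -x_1$ (which swaps the sign of $\theta_0$) it suffices to handle $\theta_0>0$. I first read off the blow-up geometry from Proposition \ref{density}(ii): the hypothesis $M_+(0+)=\frac{\cos\theta_0+1}{2}$ together with $u_0^+\equiv 0$ forces $\chi_0^+=1$ a.e.\ in the wedge $W=\{x\cdot\nu_0>0\}\cap\{x_2<0\}$, which has opening angle $\pi-\theta_0<\pi$; moreover, for $\theta_0>0$ the upper edge of $W$ is the sub-ray $\{x_2=0,\,x_1>0\}$ in blow-up coordinates. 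In the original coordinates this means $\{u>0\}$ fills, in the $L^1$-density sense, a wedge whose top edge approaches the horizontal line $\{x_2=0\}$ to the right of $x^0$.

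The next step is to transport the $L^1$-density information to pointwise/Hausdorff-type information on the free boundary near $\{x_2=0\}$. Given any $\eta>0$, for all sufficiently small $r>0$ the scaled positive set $\{u_k^+>0\}$ contains an inscribed sector of $W$ of opening angle $\pi-\theta_0-\eta$ up to a set of arbitrarily small measure; the strong maximum principle for $u^+$ in $\{u>0\}$, combined with the additional variational boundary condition $|\nabla u^+|^2\geq -x_2$ from \eqref{equation2}, then forces $\partial\{u>0\}\cap\{x_2=0,\,x_1>x_1^0\}$ to contain a sequence of points accumulating at $x^0$.

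The third step combines Oddson's Lemma \ref{Oddson} with the Bernstein estimate. Applying Oddson's Lemma to the (positive, harmonic, hence superharmonic) function $u^+$ in a sector inscribed in $W$ at $x^0$ of opening angle $\pi-\theta_0-\eta$ yields a lower bound
\[
u^+(x^0+\rho\,\bm e)\geq \kappa\,\rho^{\mu},\qquad \mu=\tfrac{\pi}{\pi-\theta_0-\eta},
\]
along the bisector $\bm e$ of $W$. Along this bisector one computes $|y_2|=\rho\cos(\theta_0/2)$, so the Bernstein bound gives $u^+(x^0+\rho\bm e)\leq C\rho^{3/2}$. Taking $\rho\to 0$ forces $\mu\geq 3/2$, and sending $\eta\to 0$ yields $\theta_0\geq \pi/3$; this already excludes the range $\theta_0\in(0,\pi/3)$. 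For the remaining range $\theta_0\in[\pi/3,\pi/2]$, I would appeal to the standing assumption that $\{u=0\}$ has locally only finitely many connected components: the sequence of stagnation points on $\{x_2=0\}$ produced in Step 2 consists of one-phase stagnation points (since $u^-\equiv 0$ near them, by the density of $\{u_0^->0\}$ on the other side of $\{x\cdot\nu_0=0\}$), so the one-phase theory of V\u{a}rv\u{a}ruc\u{a}--Weiss \cite{VW11} applies at each, giving a Stokes corner or cusp and in either case a new connected component of $\{u=0\}$; infinitely many such components contradict the finite-components hypothesis. Once $\theta_0=0$ is established, $\nu_0=(0,-1)$ and $M_+(0+)=\int_{B_1}x_2^-\,dx$ is immediate.

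The main obstacle I expect is a rigorous execution of Step 2, namely producing an honest sequence of free-boundary stagnation points on $\{x_2=0\}$ from the bare $L^1$-density data $\chi_{\{u_k^+>0\}}\to\chi_W$; since $u_k^+\to 0$ uniformly, the free boundaries $\partial\{u_k^+>0\}$ are not a priori non-degenerate, so Hausdorff convergence of free boundaries is delicate. Bridging this gap will require carefully coupling the additional boundary condition $|\nabla u^+|^2\geq -x_2$ (to rule out the free boundary retreating into the interior of $W$ away from the top edge) with the maximum-principle bound forced by harmonicity of $u^+$ in $\{u>0\}$.
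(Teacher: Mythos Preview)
Your approach has a genuine gap and misses the paper's much simpler idea.

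\textbf{The gap.} Your Step 3 applies Oddson's Lemma to $u^+$ in a sector inscribed in $W$, but Oddson requires $u^+>0$ throughout the closed sector except at the vertex. From the hypothesis you only know $\chi_{\{u_k^+>0\}}\to 1$ in $L^1$ on $W$, i.e.\ density information; this does not give a fixed sector on which $u^+$ is pointwise positive (there could be many small components of $\{u=0\}$ scattered inside). Your Step 2 does not supply this either: it aims to produce free-boundary points on $\{x_2=0\}$, not positivity on a sector. Moreover, your Step 5 invokes the finitely-many-components hypothesis, which is \emph{not} assumed in this lemma (it enters only later, in the frequency-formula argument). So even if Steps 1--4 worked for $\theta_0\in(0,\pi/3)$, your proof would be incomplete for $\theta_0\geq\pi/3$ under the lemma's stated hypotheses.

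\textbf{The paper's idea.} The paper avoids all of this by looking not at $\{x_2=0\}$ but at the line $\{x\cdot\nu_0=0\}$ in the blow-up picture. If $\nu_0\neq(0,-1)$, this line intersects $\{x_2<0\}$; pick $Y_0$ on it with second coordinate $-1$ and take the ball $B_{1/2}(Y_0)\Subset\{x_2<0\}$. On $\{x\cdot\nu_0<-\delta\}\cap B_{1/2}(Y_0)$ one has $u_k<0$ from the half-plane convergence of $u_k^-$, while the hypothesis $\chi_0^+=1$ a.e.\ on $\{x\cdot\nu_0>0\}\cap\{x_2<0\}$ forces $u_k>0$ somewhere in $B_{1/2}(Y_0)$. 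Continuity then gives $\partial\{u_k>0\}\cap B_{1/2}(Y_0)\neq\varnothing$, hence
\[
\Delta u_k^+(B_{1/2}(Y_0))\;\geq\;\int_{B_{1/2}(Y_0)\cap\partial_{\rm red}\{u_k>0\}}\sqrt{-x_2}\,d\mathcal{H}^1\;\geq\;c>0,
\]
uniformly in $k$, because $-x_2\geq 1/2$ on $B_{1/2}(Y_0)$. This contradicts $\Delta u_k^+\to\Delta u_0^+=0$ (since $u_0^+\equiv 0$). The whole argument is a few lines; no Oddson, no case split on $\theta_0$, no extra structural hypothesis. The point you missed is that when $\theta_0\neq 0$ the transition between $\{u<0\}$ and $\{u>0\}$ occurs at a definite depth $x_2<0$, where the free-boundary measure estimate is non-degenerate.
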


\begin{proof}
	Let $Y_0=(y_1^0,-1)\in\mathbb{R}^2$ be a point such that $Y_0\cdot\nu_0=0$. Suppose by contradiction that $\nu_0\neq(0,-1)$. Consider the ball $B_{1/2}(Y_0)\Subset\{x_2<0\}$. Then the blow-up sequence
	$$u_k(x)=\frac{u^+(x^0+r_kx)}{r_k^{3/2}}-\frac{u^-(x^0+r_kx)}{r_k}$$
	satisfies $u_k(x)<0$ in $\{x \ | \ x\cdot\nu_0<-\delta\}\cap B_{1/2}(Y_0)$ for $r_k$ small enough. Meanwhile, the $L_{\rm{loc}}^1(\mathbb{R}^2)$ limit $\chi_0^+$ of $\chi_{\{u>0\}}$ satisfies $\chi_0^+\equiv0$ in $\{x\cdot\nu_0>0\}$. Hence the continuity of $u_k$ implies that $u_k$ changes sign in $B_{1/2}(Y_0)$, and 
	$$\partial\{u_k>0\}\cap B_{1/2}(Y_0)\neq\varnothing,$$
	and we can deduce that the Hausdorff measure
	$$\mathcal{H}^1 \left( \{x_2\leq-1/2\}\cap\partial_{red}\{x: \ u(x^0+r_kx)>0\} \right)\geq c_1 >0$$
	for some constant $c_1$ independent of $k$. Consequently,
	\begin{equation*}
	\begin{aligned}
	\Delta u_k^+(B_{1/2}(Y_0))&=\int_{B_{1/2}(Y_0)\cap\partial\{x: \ u(x^0+r_kx)>0\}} |\nabla u_k^+| d\mathcal{S} \\
	&\geq \int_{B_{1/2}(Y_0)\cap\partial_{red}\{x: \ u(x^0+r_kx)>0\}} \sqrt{-x_2} d\mathcal{S} \\
	&>0.
	\end{aligned}
	\end{equation*}
	However, the strong convergence of $u_k^+$ to $u_0^+$ in $W_{\rm{loc}}^{1,2}(\mathbb{R}^2)$ together with Proposition \ref{density} gives that $\Delta u_k^+(B_{1/2}(Y_0))\rightarrow0+$, a contradiction. Hence $\nu_0=(0,-1)$ and $M_+(0+)=\int_{B_1} x_2^- dx$.
\end{proof}

The subsequent process follows the idea from V$\check{a}$rv$\check{a}$ruc$\check{a}$ and Weiss in \cite{VW11}\cite{VW12} and \cite{VW14} for one-phase gravity water wave and introduces a frequency formula to exclude the horizontally flat singularities, whose weighted density is equal to $1$. 

Define
\begin{equation}
D_+(r)=D_{+,x^0,u}(r)=\frac{r\int_{B_r(x^0)} |\nabla u^+|^2 dx}{\int_{\partial B_r(x^0)} (u^+)^2 d\mathcal{S}}
\end{equation}
and
\begin{equation}
V_+(r)=V_{+,x^0,u}(r)=\frac{r\int_{B_r(x^0)} x_2^-\left( 1-\chi_{\{u>0\}} \right) dx}{\int_{\partial B_r(x^0)} (u^+)^2 d\mathcal{S}}.
\end{equation}
Define the "frequency" function
\begin{equation} \label{H}
\begin{aligned}
H_+(r)&=H_{+,x^0,u}(r)=D_+(r)-V_+(r) \\
&=\frac{r\int_{B_r(x^0)} \left( |\nabla u^+|^2 - x_2^-\left( 1-\chi_{\{u>0\}} \right) \right) dx }{\int_{\partial B_r(x^0)} (u^+)^2 d\mathcal{S}}.
\end{aligned}
\end{equation}

The "frequency" function $H_+(r)$ has an implication of the decay rate of $|\nabla u^+|$ at $x^0$, which allows the compactness of a new blow-up sequence
\begin{equation} \label{vk}
v_k^+(x):=\frac{u^+(x^0+r_kx)}{\sqrt{ r_k^{-1}\int_{\partial B_{r_k}(x^0)} (u^+)^2 d\mathcal{S} }}
\end{equation}
as $r_k\rightarrow0+$ with nontrivial blow-up limit, and demonstrates that the horizontally flat singularity is impossible.

Notice that $H_+(r)$ only involves the positive phase $u^+$, and the "weighted density" $M_+(0+)$ is proved to be the constant $1$, which is same with the result for one-phase water wave when $x^0\in\Sigma^u$. Hence the subsequent analysis can deduced as in the one-phase work \cite{VW11}, and we sketch the demonstration here for the completeness of the proof.

The derivative of the "frequency" function $H_+(r)$ writes
\begin{equation}
\begin{aligned}
H_+'(r) &= \frac{2}{r}\left( \int_{\partial B_r(x^0)} (u^+)^2 d\mathcal{S} \right)^{-1}\int_{\partial B_r(x^0)} \left[ r(\nabla u^+\cdot \nu) - D_+(r)u^+ \right]^2 d\mathcal{S} \\
& \quad + \frac2r V_+^2(r) +\frac2r V_+(r)\left( H_+(r)-\frac32 \right),
\end{aligned}
\end{equation}
or
\begin{equation}
\begin{aligned}
H_+'(r) &= \frac{2}{r}\left( \int_{\partial B_r(x^0)} (u^+)^2 d\mathcal{S} \right)^{-1}\int_{\partial B_r(x^0)} \left[ r(\nabla u^+\cdot \nu) - H_+(r)u^+ \right]^2 d\mathcal{S} \\
& \quad + \frac2r V_+(r)\left( H_+(r)-\frac32 \right).
\end{aligned}
\end{equation}
Hence the function $H_+(r)$ has a right limit $H_+(0+)$, since we have $V_+(r)>0$ and $H_+(r)\geq\frac32$. Direct calculation shows that
$$H_+(0+)\geq\frac32,$$
and the sequence $v_k^+$ defined in (\ref{vk}) is bounded in $W^{1,2}(B_1)$ and satisfies
\begin{equation} \label{6-1}
\int_{B_\sigma\backslash B_\rho} |x|^{-5}[\nabla v_k^+(x)\cdot x - H_+(0+)v_k^+(x)]^2 dx \rightarrow 0 \quad \text{as} \quad r_k\rightarrow0+
\end{equation}
for every $0<\rho<\sigma<1$. Denote the blow-up limit
$$v_0(x):=\lim_{r_k\rightarrow0+} v_k(x) \quad \text{weakly in $W^{1,2}(B_1)$},$$
it is straightforward to verify that $v_0\geq0$ is a homogeneous function of degree $H_+(0+)$. The next lemma shows the concentration compactness that $v_k^+$ converges strongly to $v_0$ in $W_{\rm{loc}}^{1,2}(B_1\backslash\{0\})$, and gives the nontrivial form of the blow-up limit $v_0$.

\begin{lem} \label{cc}
	Let $u$ be a variational solution of (\ref{eq1.1}) in $D$ with $\lambda>0$ satisfying Assumption \ref{assume}, and $x^0\in\Sigma^u$. Let $r_k\rightarrow0+$ be such that the sequence $v_k^+$ given by (\ref{vk}) converges weakly to $v_0$ in $W^{1,2}(B_1)$. Then the following holds:
	
	(i) $v_k^+$ converges to $v_0$ strongly in $W_{\rm{loc}}^{1,2}(B_1\backslash\{0\})$, $v_0$ is continuous on $B_1$ and $\Delta v_0$ is a non-negative Radon measure satisfying $v_0\Delta v_0=0$ in the sense of Radon measures in $B_1$.
	
	(ii) There exists an integer $N(x^0)\geq2$ such that
	$$H_{+,x^0,u}(0+)=N(x^0)$$
	and
	$$\frac{u^+(x^0+rx)}{\sqrt{ r^{-1}\int_{\partial B_r(x^0)} (u^+)^2 d\mathcal{S} }} \rightarrow \frac{\rho^{N(x^0)}|\sin(N(x^0)\min(\max(\theta,0),\pi))|}{\sqrt{\int_0^\pi \sin^2(N(x^0)\theta)d\theta }} \quad \text{as} \quad r\rightarrow0+$$
	strongly in $W_{\rm{loc}}^{1,2}(B_1\backslash\{0\})$ and weakly in $W^{1,2}(B_1)$, where $x=(\rho\cos\theta,\rho\sin\theta)$.
\end{lem}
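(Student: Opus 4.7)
The plan is to follow the one-phase frequency-formula template of V\v{a}rv\v{a}ruc\v{a}--Weiss \cite{VW11}: the preceding lemma forces $\nu_0=(0,-1)$, so $u^+\equiv 0$ in $B_R(x^0)\cap\{x_2\geq 0\}$ and every $v_k^+$ vanishes on the upper half-ball; consequently only the positive phase $u^+$ enters the analysis, and the negative phase plays no role in the blow-up at $x^0$.

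First, for strong convergence, continuity, and the measure identity: the normalization yields $\int_{\partial B_1}(v_k^+)^2\,d\mathcal{S}=1$, and combined with the finite right limit $H_+(0+)$ this gives a uniform $W^{1,2}(B_1)$ bound on $v_k^+$. Since $v_k^+$ is harmonic in $\{v_k^+>0\}$ and $\Delta v_k^+$ is a non-negative Radon measure carried by $\partial\{v_k^+>0\}$ whose local mass is controlled by the Bernstein estimate (see Remark \ref{bv}), a standard harmonic-replacement comparison in dyadic annuli produces uniform interior H\"older estimates and strong $W^{1,2}_{\mathrm{loc}}(B_1\setminus\{0\})$ convergence of $v_k^+$ to $v_0$. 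Continuity of $v_0$ on all of $B_1$ comes from the same H\"older bound applied at the origin. Passing to the limit in $\Delta v_k^+\geq 0$ and in $v_k^+\Delta v_k^+=0$ (which holds since $v_k^+$ is harmonic where positive and vanishes elsewhere) gives both $\Delta v_0\geq 0$ and $v_0\Delta v_0=0$ as Radon measures. Inserting the strong convergence into (\ref{6-1}) and passing to the limit yields
\[
\int_{B_\sigma\setminus B_\rho}|x|^{-5}\bigl(\nabla v_0\cdot x-H_+(0+)v_0\bigr)^2\,dx=0,
\]
so Euler's relation $\nabla v_0\cdot x=N v_0$ with $N:=H_+(0+)$ proves that $v_0$ is homogeneous of degree $N$.

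Next, to identify $v_0$ explicitly: writing $v_0(\rho,\theta)=\rho^N\phi(\theta)$, the function $\phi\geq 0$ is continuous on $\mathbb{S}^1$, vanishes on the closed semicircle where every $v_k^+$ vanishes, and satisfies $\phi''+N^2\phi=0$ on each connected component of $\{\phi>0\}$ thanks to $v_0\Delta v_0=0$ combined with the polar form of the Laplacian. Each such component is a sinusoidal arch of angular width $\pi/N$; continuity of $\phi$ at the endpoints of the supporting semicircle together with non-negativity forces these arches to tile that semicircle exactly, which is possible only when $N$ is a positive integer, after which the amplitudes must agree, and the normalization $\int_{\partial B_1}v_0^2\,d\mathcal{S}=1$ then pins them down as in the stated formula. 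To exclude $N=1$, and hence obtain $N\geq 2$, I invoke the strong Bernstein estimate to get $u^+(x^0+rx)=O(r^{3/2})$ uniformly on $B_1$, while the hypothesis $x^0\in\Sigma^u$ combined with Proposition \ref{density} yields the trivial blow-up $u_0^+\equiv 0$, i.e.\ $u^+(x^0+rx)=o(r^{3/2})$. Therefore
\[
\sqrt{r^{-1}\int_{\partial B_r(x^0)}(u^+)^2\,d\mathcal{S}}=o(r^{3/2});
\]
since homogeneity of $v_0$ forces this normalizing quantity to scale like $r^N$ up to a positive constant, we conclude $N>3/2$, and integrality then gives $N\geq 2$.

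The technical crux will be the strong $W^{1,2}_{\mathrm{loc}}$ convergence away from the origin, specifically across the flat boundary $\{x_2=0\}\cap B_1\setminus\{0\}$, where a priori the gradient energy of $v_k^+$ could concentrate along $\partial\{v_k^+>0\}$. Controlling this requires the concentration-compactness machinery of Evans--M\"uller \cite{EM94} referenced in the opening paragraph of this section, coupled with the Bernstein-type bound on $\Delta v_k^+$ to rule out defect measures in the bulk. Once this compactness step is established, the remaining arguments (homogeneity from (\ref{6-1}), ODE classification on the sphere, and the degree lower bound via Bernstein) are direct transcriptions of the one-phase analysis in \cite{VW11}.
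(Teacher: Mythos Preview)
Your outline has the right architecture, but two load-bearing steps are missing.

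\textbf{Strong convergence.} You first claim it via ``harmonic replacement in dyadic annuli'' and later via Evans--M\"uller plus a Bernstein bound on $\Delta v_k^+$; neither is sufficient. Evans--M\"uller \cite{EM94} yields only distributional convergence of the quadratic combinations $\partial_1 v_k^+\partial_2 v_k^+$ and $(\partial_1 v_k^+)^2-(\partial_2 v_k^+)^2$, which does not by itself give strong $L^2$ convergence of $\nabla v_k^+$. The paper's mechanism is to combine this with the radial information in (\ref{6-1}): since $\nabla v_k^+\cdot x-H_+(0+)v_k^+\to 0$ strongly in $L^2$ on every annulus, one has strong convergence of $x_1\partial_1 v_k^++x_2\partial_2 v_k^+$, and multiplying by $\partial_1 v_k^+$ against a cutoff and using the Evans--M\"uller convergence of the cross term isolates $\int(\partial_1 v_k^+)^2 x_1\eta$; repeating after rotations by $45^\circ$ then gives strong convergence of each partial derivative. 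In your write-up (\ref{6-1}) appears only \emph{after} strong convergence, to read off homogeneity; in fact it is an essential input \emph{to} the compactness.

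\textbf{Classifying $v_0$.} Your claim that ``continuity of $\phi$ at the endpoints together with non-negativity forces these arches to tile the semicircle exactly'' is false: the properties $v_0\geq 0$, $v_0$ continuous, $\Delta v_0\geq 0$, $v_0\Delta v_0=0$ do not rule out a single arch $A\sin(N(\theta-a))$ supported on $[a,a+\pi/N]\subset(-\pi,0)$ with $\phi\equiv 0$ elsewhere, for any real $N>1$. The missing ingredient is that the domain-variation identity passes to the limit (using the strong convergence and $V_+(r_k)\to 0$), so that $\int_{B_1\cap\{x_2<-\delta\}}(|\nabla v_0|^2\operatorname{div}\bm\phi-2\nabla v_0 D\bm\phi\nabla v_0)\,dx=0$ for all compactly supported $\bm\phi$. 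Testing across an interior ray in $\partial\{v_0>0\}$ forces $|\partial_\theta v_0|$ to match from both sides; hence an arch cannot abut a region where $\phi\equiv 0$ (one side would give slope $\pm NA\neq 0$, the other $0$). This eliminates gaps, forces equal amplitudes, and only then does tiling of the half-circle by arches of width $\pi/N$ yield integrality of $N$. Once $N$ is an integer, $N\geq 2$ follows immediately from the already-established bound $H_+(0+)\geq 3/2$; your detour through the scaling of the normalizing factor is unnecessary and, as stated, unjustified (that factor need not have exact power-law behavior).
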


In the two-dimensional case we prove concentration compactness which allows us to preserve variational solutions in the blow-up limit at degenerate stagnation points and excludes concentration. As in \cite{VW11} and \cite{VW12}, we combine the concentration compactness result of Evans and M\"uller in \cite{EM94} with information gained by our frequency formula. In addition, we obtain strong convergence of our blow-up sequence which is necessary in order to prove our main theorems and get the explicit form of the limit of $v_k$. We refer the detailed proof to the Appendix C, since it is quite similar with the aforementioned papers \cite{VW11} and \cite{VW12} for one-phase gravity water wave.

Now we come to our conclusion.

\begin{thm}
	Let $u$ be a weak solution of (\ref{eq1.1}) in $D$ with $\lambda>0$ and $x^0\in S^u_+$ satisfying Assumption \ref{assume}, $\nu_0$ and $\theta_0$ be as in Lemma \ref{lem1}, and suppose that $|\nabla u^+|^2\leq x_2^-$ in $D$. Suppose moreover that $\{u=0\}$ has locally only finite many connected components. Then we have $M_{+,x^0,u}(0+) = \int_{B_1} x_2^- \chi_{\{x:\Theta<\theta<\Theta+\frac{2\pi}{3}\}} dx$ for $\Theta\in[-\pi, -\frac{2\pi}{3}]$ lying in $\{-\frac{5\pi}{6}, \theta_0-\pi, \theta_0-\frac{2\pi}{3}\}$, and
	\begin{equation*}
	\frac{u^+(x^0+rx)}{r^{3/2}}\rightarrow
	\begin{cases}
	\frac23 C \rho^{3/2} \cos \left( \frac32\theta-\frac32\Theta-\frac{\pi}{2} \right), \quad\, \Theta<\theta<\Theta+\frac23\pi, \\
	0, \qquad\qquad\qquad\qquad\qquad\qquad \text{otherwise}
	\end{cases}
	\end{equation*}
	strongly in $W_{\rm{loc}}^{1,2}(\mathbb{R}^2)$ and locally uniformly in $\mathbb{R}^2$, where
	\begin{equation*}
	C=
	\begin{cases}
	\sqrt{2}/2 \qquad\qquad\qquad \text{when} \ \Theta=-\frac56\pi \ \text{(detached case)}, \\
	\sqrt{-\sin\left(\theta_0-\frac{\pi}{3}\right)} \quad\ \ \text{when} \ \Theta=\theta_0-\pi \ \text{(left-overlapping case)}, \\
	\sqrt{-\sin\left(\theta_0-\frac{2\pi}{3}\right)} \quad \text{when} \ \Theta=\theta_0-\frac{2\pi}{3} \ \text{(right-overlapping case)}.
	\end{cases}
	\end{equation*}
	Moreover, the free boundary $\partial\{u>0\}$ is in a neighborhood of $x^0$ the union of two $C^1$-graphs.
\end{thm}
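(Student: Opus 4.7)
Plan: The statement synthesizes Sections 2.1--2.5, so the approach is to use Proposition \ref{density} to constrain $M_+(0+)$ to three candidate values and then exclude the two degenerate alternatives using the stronger hypotheses of the theorem, namely the strong Bernstein bound $|\nabla u^+|^2\le x_2^-$ in $D$ and the finiteness of connected components of $\{u=0\}$. Once only the nontrivial weighted density survives, the explicit blow-up follows from Proposition \ref{density}(i-1)--(i-3), and the $C^1$-graph structure is read off from the uniqueness of the limit combined with free boundary regularity away from $x^0$.

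The trivial-density case $M_+(0+)=0$ is eliminated by direct appeal to Proposition \ref{prop1}, whose hypotheses are exactly the strong Bernstein bound assumed here, and which forces $u\equiv 0$ on an open ball around $x^0$, contradicting $x^0\in\Gamma_{\rm bp}$.

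The main obstacle is to eliminate the horizontally flat case $M_+(0+)=\tfrac{\cos\theta_0+1}{2}$. The lemma at the beginning of \S2.5 (whose proof uses only Assumption \ref{assume}) already forces $\nu_0=(0,-1)$ and hence $M_+(0+)=1$, placing $x^0\in\Sigma^u$. I would then invoke the frequency formula: Lemma \ref{cc} yields a rescaled blow-up $v_0=c_N\rho^{N}|\sin(N\min(\max(\theta,0),\pi))|$ with an integer $N=N(x^0)\ge 2$. To reach a contradiction, the plan is to combine two ingredients: (a) the petal structure of $v_0$ (namely, $\{v_0>0\}$ has $N\ge 2$ disjoint sectors), which through the non-degeneracy of $v_0$ and Hausdorff convergence of $\partial\{u>0\}$ (valid because the finitely-many-components hypothesis bounds the number of Hausdorff-limiting free boundary curves, in the spirit of the remarks following Proposition \ref{density}) produces at least $N$ distinct positivity components of $u^+$ accumulating at $x^0$; and (b) an integral identity obtained by testing the variational formulation for $u^+$ against a radial cutoff and inserting the Stokes-type condition $|\nabla u^+|^2\ge -x_2$ from \eqref{equation2}, which in the limit is compatible only with the half-integer exponent $N=\tfrac{3}{2}$ coming from the standard Stokes corner scaling and therefore excludes $N\in\mathbb{Z}_{\ge 2}$. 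I expect step (b) to be the hard part: one must run the V\u{a}rv\u{a}ruc\u{a}--Weiss integration-by-parts argument in the two-phase setting and verify that the $u^-$ contributions genuinely vanish in the rescaled identity, which is exactly where it is crucial that the normalization $d_k:=\sqrt{r_k^{-1}\int_{\partial B_{r_k}(x^0)}(u^+)^2\,d\mathcal{S}}$ decays faster than $r_k^{3/2}$ (since $N\ge 2$) while $u^-$ is only linear, so $u^-(x^0+r_k\cdot)/d_k\to 0$ locally uniformly.

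Once both degenerate alternatives are ruled out, Proposition \ref{density}(i-1)--(i-3) yields directly the stated formulas for $u_0^+$ together with the three alternatives for $\Theta\in\{-\tfrac{5\pi}{6},\theta_0-\pi,\theta_0-\tfrac{2\pi}{3}\}$ and the corresponding constants $C$. For the final $C^1$-graph statement, I would combine the uniqueness of the blow-up with the Hausdorff convergence of $\partial\{u>0\}$ to the two asymptotic rays $\{\theta=\Theta\}\cup\{\theta=\Theta+\tfrac{2\pi}{3}\}$; on each ray the asymptotic direction is non-vertical (in each of the three subcases the slopes are explicit and finite), so that locally $\partial\{u>0\}$ is the graph of a continuous function $\eta_i$ on one side of $x_1^0$, and away from $x^0$ classical regularity -- Alt--Caffarelli on $\Gamma_{\rm op}^+$ and Alt--Caffarelli--Friedman/De Silva--Ferrari--Salsa on $\Gamma_{\rm tp}$ -- upgrades each $\eta_i$ to a $C^1$ function on its half-interval with the one-sided derivatives at $x_1^0$ dictated by the asymptotic rays.
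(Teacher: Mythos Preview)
Your overall architecture matches the paper's: reduce to the three densities via Proposition~\ref{density}, kill $M_+(0+)=0$ with Proposition~\ref{prop1}, kill $M_+(0+)=1$ via the frequency formula, and then read off the blow-up and the $C^1$ structure. The problem is your mechanism for excluding $\Sigma^u$.

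Your step~(b) is a gap. There is no integral identity coming from $|\nabla u^+|^2\ge -x_2$ that singles out the exponent $3/2$ against integers $N\ge 2$; the frequency $H_+(0+)=N$ lives at a \emph{different} scaling from the $3/2$-homogeneous one, and under the normalization $d_k$ the free-boundary condition degenerates rather than producing a constraint on $N$. The paper's contradiction is both simpler and uses a different pair of scalings simultaneously. From Lemma~\ref{cc}, $v_0$ with $N\ge 2$ has a nodal ray in $\{x_2<0\}$; the finitely-many-components hypothesis then forces $\partial_{\mathrm{red}}\{x:u(x^0+rx)>0\}$ to contain a continuous curve converging to that ray, so
\[
\mathcal{H}^1\bigl(\{x_2<0\}\cap\partial_{\mathrm{red}}\{x:u(x^0+rx)>0\}\bigr)\ge c_1>0.
\]
Now switch to the $3/2$-scaling $u_k^+=u^+(x^0+r_k x)/r_k^{3/2}$: since $x^0\in\Sigma^u$ means $u_0^+\equiv 0$, one has $\Delta u_k^+(B_1)\to 0$, while the weak-solution condition gives
\[
\Delta u_k^+(B_1)=\int_{B_1\cap\partial_{\mathrm{red}}\{x:u(x^0+r_kx)>0\}}\sqrt{-x_2}\,d\mathcal{S}\ \ge\ c_2>0,
\]
a contradiction. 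So the frequency blow-up is used only to locate free boundary in $\{x_2<0\}$; the contradiction itself comes from the collapse of $\Delta u_k^+$ under the $3/2$-scaling. Your plan never invokes this second scaling, and without it the argument does not close.

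For the $C^1$-graph conclusion, Hausdorff convergence of $\partial\{u>0\}$ to the two rays plus smoothness away from $x^0$ is not enough to get $C^1$ up to $x^0$; you need the outer normal to converge. The paper does this by a double rescaling: set $v_\rho(x)=u^+(\rho x)/\rho^{3/2}-u^-(\rho x)/\rho$, pick $\xi\in\partial B_1\cap\partial\{v_\rho>0\}$ with $\xi_2\le -\tfrac{1}{10}$, rescale again to $w_r(x)=v_\rho^+(\xi+rx)/(\sqrt{-\xi_2}\,r)$, and apply the quantitative flatness-implies-$C^{1,\alpha}$ estimate of \cite{S11} to get $|\bm e_r(0)-\bm e_0|\le C\epsilon^2$ uniformly in small $\rho$. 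This is what pins the one-sided derivatives at $x_1^0$; your sketch should incorporate this step explicitly.
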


\begin{proof}
	We first show that $\Sigma^u$ is empty. Suppose towards a contradiction that there exists $x^0\in\Sigma^u$. Recalling Lemma \ref{cc} we infer that there exists an integer $N(x^0)\geq2$ such that
	$$\frac{u^+(x^0+rx)}{\sqrt{ r^{-1}\int_{\partial B_r(x^0)} (u^+)^2 d\mathcal{S} }} \rightarrow \frac{\rho^{N(x^0)}|\sin(N(x^0)\min(\max(\theta,0),\pi))|}{\sqrt{\int_0^\pi \sin^2(N(x^0)\theta)d\theta }} \quad \text{as} \quad r\rightarrow0+$$
	strongly in $W_{\rm{loc}}^{1,2}(B_1\backslash\{0\})$ and weakly in $W^{1,2}(B_1)$, where $x=(\rho\cos\theta,\rho\sin\theta)$. But then the assumption about connected components implies that $\partial_{red}\{x: u(x^0+rx)>0\}$ contains the image of a continuous curve converging, as $r\rightarrow0+$, locally in $\{x_2<0\}$ to a half-line $\{\alpha z: \alpha>0\}$ where $x_2<0$. It follows that
	$$\mathcal{H}^1(\{x_2<0\}\cap\partial_{red}\{x:u(x^0+rx)>0\})\geq c_1>0,$$
	which leads to a contradiction with
	$$0 \leftarrow \Delta \frac{u^+(x^0+rx)}{r^{3/2}}(B_1) = \int_{B_1\cap\partial_{red}\{x:u(x^0+rx)>0\}} \sqrt{-x_2} d\mathcal{S}.$$
	Hence $\Sigma^u$ is empty.
	
	We assume for simplicity that $x^0=0\in S^u_+$, we have that
	\begin{equation*}
	\frac{u^+(x^0+rx)}{r^{3/2}}\rightarrow
	\begin{cases}
	\frac23 C \rho^{3/2} \cos \left( \frac32\theta-\frac32\Theta-\frac{\pi}{2} \right), \quad \Theta<\theta<\Theta+\frac23\pi, \\
	0, \qquad\qquad\qquad\qquad\qquad\qquad \text{otherwise}
	\end{cases}
	\end{equation*}
	strongly in $W_{\rm{loc}}^{1,2}(\mathbb{R}^2)$ and locally uniformly in $\mathbb{R}^2$, where $\Theta\in[-\pi, -\frac{2\pi}{3}]$ has three possible values $\{-\frac{5\pi}{6}, \theta_0-\pi, \theta_0-\frac{2\pi}{3}\}$, and $C$ is uniquely determined. We will show that in a neighborhood of $0$ the free boundary $\partial\{u>0\}$ is the union of two $C^1$-graphs. Note that it suffices to prove for the part $\partial\{u>0\}\backslash\partial\{u<0\}$, since the part $\partial\{u>0\}\cap\partial\{u<0\}\subset\partial\{u<0\}$ is already $C^{1,\alpha}$ by Proposition \ref{reg}, and the $C^{1,\alpha}$ regularity for $\Gamma_{\rm tp}$ near branch point is referred to \cite{PSV21}. We will give only the proof for $x_1<0$, since the case for $x_1>0$ is quite similar.
	
	For
	$$v_\rho(x):=\frac{u^+(\rho x)}{\rho^{3/2}} - \frac{u^-(\rho x)}{\rho}$$
	we have that
	\begin{equation*}
	\begin{cases}
	\Delta v_\rho(x) = 0 \qquad\quad\, \text{for} \quad x\in\{v_\rho(x)>0\}, \\
	|\nabla v_\rho(x)|^2 = -x_2 \quad \text{for} \quad x\in\partial\{v_\rho>0\}\backslash\partial\{v_\rho<0\}.
	\end{cases}
	\end{equation*}
	Scaling once more for $\xi=(\xi_1,\xi_2)\in\partial B_1\cap(\partial\{v_\rho>0\}\backslash\partial\{v_\rho<0\})$ away from the origin $0$, which implies that for $\rho$ small enough, $\xi_2\leq-\frac1{10}$, we obtain for
	$$w_r(x):= \frac{v_\rho^+(\xi+rx)}{\sqrt{-\xi_2}r}$$
	that
	\begin{equation*}
	\begin{cases}
	\Delta w_r(x) = 0 \qquad\quad\quad\ \text{for} \quad x\in\{w_r(x)>0\}, \\
	|\nabla w_r(x)|^2 = 1 + \frac{rx_2}{\xi_2} \quad \text{for} \quad x\in\partial\{w_r>0\}\backslash\partial\{w_r<0\}.
	\end{cases}
	\end{equation*}
	We are going to use a flatness-implies-regularity result in \cite{S11}. For each $\epsilon\in(0,\epsilon_0)$ for some $\epsilon_0>0$,
	\begin{equation} \label{7-1}
	\max(x\cdot\bm e_0 - \epsilon, 0)\leq w \leq \max(x\cdot\bm e_0 + \epsilon, 0) \quad \text{in} \quad B_1 \quad \text{for some unit vector $\bm e_0$}
	\end{equation}
	implies that the outward unit normal $\bm e_r$ on the free boundary $\partial\{w_r>0\}\backslash\partial\{w_r<0\}$ satisfies
	$$|\bm e_r(0)-\bm e_0|\leq C\epsilon^2.$$
	Note that $\bm e_r(0)=v_\rho(\rho\xi)$. Since (\ref{7-1}) is satisfied for $\bm e_0=(\sin\theta_0,-\cos\theta_0)$, $r=r(\epsilon)$ and every sufficiently small $\rho>0$, we obtain that the outward unit normal $\bm e(x)$ on $\partial\{u>0\}\backslash\partial\{u<0\}$ converges to $\bm e_0$ as $x\rightarrow0$, $x_1<0$. It follows that the present curve component is the graph of a $C^1$-function up to $x_1=0$. The same argument holds for $x_1>0$, which completes the proof.
\end{proof}

\section{The complete-degenerate case}

In this section, we consider the remaining case $\lambda=0$, which means that if there is a stagnation point $x^0=(x_1^0,0)\in\Gamma_{\rm tp}$, then both $u^\pm$ have degenerate gradient at $x^0$, namely, $|\nabla u^\pm(x^0)|=0$. However, we will prove that the stagnation point cannot be on the two-phase free boundary $\Gamma_{\rm tp}$. Otherwise, the monotonicity formula says that at least one of the weighted densities of $u^\pm$ at $x^0$ vanish, which in turn implies that $u^+$ or $u^-$ is equal to $0$ near $x^0$, contradicting with the definition of $\Gamma_{\rm tp}$.

The corresponding governing equation in $D$ writes
\begin{equation}
\begin{cases}
\Delta u = 0 \qquad\qquad\qquad\ \ \; \text{in} \quad D\cap\{u\neq0\}, \\
|\nabla u^\pm|^2 = -x_2 \qquad\quad\ \ \ \text{on} \quad D\cap\Gamma_{\rm op}^\pm, \\
|\nabla u^+|^2 - |\nabla u^-|^2 = 0 \quad \text{on} \quad D\cap\Gamma_{\rm tp},
\end{cases}
\end{equation}
when $\lambda=0$. The last equality implies that $|\nabla u^+|$ and $|\nabla u^-|$ possess the same decay rate near the stagnation point $x^0\in\Gamma_{\rm tp}$. Similarly as in Section 2, we give the following assumption, which is somewhat equivalent to the Bernstein estimate to the two fluids.
\begin{assume} \label{assume2}
	(Bernstein estimate) There is a positive constant $C$ such that $|\nabla u|^2 \leq C x_2^-$ locally in $D$.
\end{assume}

\subsection{Monotonicity formula and densities}

In this section we present a monotonicity formula involving the two-phase terms $u^\pm$, hoping to make a blow-up analysis for both $u^\pm$ at $x^0$. The polynomial convergence rates of $u^\pm$ near the stagnation point $x^0\in\Gamma_{\rm tp}$ are both of at most $\frac32$-order, and we construct the monotonicity formula as follows.

We define
\begin{equation}
I(r)=I_{x^0,u}(r)=r^{-3}\int_{B_r(x^0)}\left( |\nabla u|^2 + (-x_2)\chi_{\{u\neq0\}} \right)dx,
\end{equation}
\begin{equation}
J(r)=J_{x^0,u}(r)=r^{-4}\int_{\partial B_r(x^0)}  u^2 d\mathcal{S},
\end{equation}
and the Weiss boundary adjusted energy
\begin{equation}
\begin{aligned}
M(r)&=M_{x^0,u}(r) \\
&=I(r)-\frac32 J(r) \\
&=r^{-3}\int_{B_r(x^0)}\left( |\nabla u|^2 + (-x_2)\chi_{\{u\neq0\}} \right)dx-\frac32 r^{-4}\int_{\partial B_r(x^0)}  u^2 d\mathcal{S}.
\end{aligned}
\end{equation}
For notational simplicity, we define further that
$$M_+(r)=r^{-3}\int_{B_r(x^0)}\left( |\nabla u^+|^2 + (-x_2)\chi_{\{u>0\}} \right)dx-\frac32 r^{-4}\int_{\partial B_r(x^0)}  (u^+)^2 d\mathcal{S}$$
and
$$M_-(r)=r^{-3}\int_{B_r(x^0)}\left( |\nabla u^-|^2 + (-x_2)\chi_{\{u<0\}} \right)dx-\frac32 r^{-4}\int_{\partial B_r(x^0)}  (u^-)^2 d\mathcal{S}.$$
Notice that
$$M(r)=M_+(r)+M_-(r).$$
We will first give the monotonicity of $M(r)$. It should be noted that unlike the one-phase situation, the function $u$ has no sign condition in $M(r)$. Nevertheless, the explicit form of the derivative $M'(r)$ is showed to be same as in the one-phase problem.

\begin{prop}
	Let $u$ be a variational solution of (\ref{eq1.1}) in $D$ with $\lambda=0$ and $x^0\in\Gamma_{\rm tp}$ be a stagnation point. Then for any $0<r<R/2$.
	Then, for a.e. $r\in(0,R/2)$,
	\begin{equation*}
	\frac{d}{dr} M(r)=2r^{-3}\int_{\partial B_r(x^0)} \left( \nabla u\cdot\nu-\frac32\frac{u}r \right)^2 d\mathcal{S}.
	\end{equation*}
\end{prop}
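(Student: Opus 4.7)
The plan is to mimic the proof of Proposition~\ref{mono2} essentially verbatim, exploiting the key simplification that when $\lambda=0$ the adjusted energy becomes symmetric in the two phases: the integrands $(-x_2)\chi_{\{u>0\}}$ and $(-x_2)\chi_{\{u<0\}}$ in the definition of $M(r)$ merge into the single term $(-x_2)\chi_{\{u\neq0\}}$, so $u$ can be treated as one unsigned function rather than splitting into $u^\pm$. This is in sharp contrast with Section 2, where the presence of $\lambda^2$ forced asymmetric scaling between $u^+$ and $u^-$.

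First I would differentiate $M(r)$ term by term, using the general identity
\[
\frac{d}{dr}\left( r^\alpha \int_{\partial B_r(x^0)} w^2\, d\mathcal{S} \right) = (\alpha+1)\, r^{\alpha-1} \int_{\partial B_r(x^0)} w^2\, d\mathcal{S} + 2 r^\alpha \int_{\partial B_r(x^0)} w\, \nabla w\cdot\nu\, d\mathcal{S}
\]
applied with $w=u$, $\alpha=-4$, together with the coarea formula for the bulk term $I(r)$. After reducing $\int_{B_r(x^0)}|\nabla u|^2\,dx = \int_{\partial B_r(x^0)} u\,\nabla u\cdot\nu\, d\mathcal{S}$ via the approximation $\max\{u^\pm-\epsilon,0\}^{1+\epsilon}$ and sending $\epsilon\to 0$ exactly as in the proof of \propref{mono2}, I would obtain an explicit formula for $M'(r)$ containing a boundary integral of $|\nabla u|^2 - x_2\chi_{\{u\neq 0\}}$, a boundary integral of $u\,\nabla u\cdot\nu$, a boundary integral of $u^2$, and one bulk term in $x_2\chi_{\{u\neq0\}}$.

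The next step is a Pohozaev-type identity, derived by inserting the radially cut-off dilation $\bm\phi_\kappa(x) = \eta_\kappa(|x-x^0|;r)(x-x^0) \in W_0^{1,2}(B_r(x^0);\mathbb{R}^2)$ into the two-phase variational identity (Definition~\ref{def1} in Appendix~B) and letting $\kappa\to 0$. Crucially, because $\lambda=0$ we may use $\bm\phi_\kappa$ without restricting to $\{u\geq 0\}$ (as was needed in \propref{mono2}): the two sign contributions collapse to one, and as in the proof of \propref{mono2} the standard convergences $\eta_\kappa\to\chi_{B_r(x^0)}$ pointwise and $\int_D\eta_\kappa'(|x-x^0|;r)f(x)\,dx\to-\int_{\partial B_r(x^0)}f(x)\,d\mathcal{S}$ yield
\[
2r\int_{\partial B_r(x^0)} (\nabla u\cdot\nu)^2\, d\mathcal{S} - r\int_{\partial B_r(x^0)} \bigl( |\nabla u|^2 - x_2\chi_{\{u\neq0\}} \bigr)\, d\mathcal{S} - 3\int_{B_r(x^0)} x_2\chi_{\{u\neq0\}}\, dx = 0.
\]
Substituting this into the expression for $M'(r)$ cancels the bulk $x_2\chi_{\{u\neq0\}}$ term exactly, and completing the square on the boundary contribution produces $2r^{-3}\int_{\partial B_r(x^0)}(\nabla u\cdot\nu - \tfrac{3}{2}u/r)^2\, d\mathcal{S}$, as desired.

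The proof is essentially a transcription of \propref{mono2} and contains no conceptual obstacle; the only delicate points are the bookkeeping of signs when the two-phase terms combine (which is where $\lambda=0$ is used in an essential way), and the verification that the approximation argument justifying $\int_{B_r}|\nabla u|^2 = \int_{\partial B_r} u\,\nabla u\cdot\nu$ applies to $u$ as a whole, not merely to $u^+$ and $u^-$ separately. Both follow at once by adding the two one-sided approximations and using that $\{u>0\}$ and $\{u<0\}$ are disjoint open sets on which $u$ is harmonic.
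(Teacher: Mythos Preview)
Your proposal is correct and follows essentially the same route as the paper's own proof: direct differentiation of $M(r)$, the approximation argument reducing $\int_{B_r}|\nabla u|^2$ to a boundary term via $\max\{u^\pm-\epsilon,0\}^{1+\epsilon}$, insertion of the radial test function $\bm\phi_\kappa(x)=\eta_\kappa(|x-x^0|;r)(x-x^0)$ into the two-phase variational identity (here it is Definition~\ref{def2}, not \ref{def1}, since $\lambda=0$), and then substitution and completion of the square. The paper carries out exactly these steps in the same order.
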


\begin{proof}
	We calculate the derivative of $M(r)$ directly and get that
	\begin{equation}
	\begin{aligned}
	\frac{d}{dr}M(r)
	&=r^{-3}\int_{\partial B_r(x^0)} \left( |\nabla u|^2 - x_2\chi_{\{u\neq0\}} \right)d\mathcal{S} + 3r^{-4}\int_{B_r(x^0)} x_2\chi_{\{u\neq0\}} dx \\
	&\quad -6r^{-4}\int_{\partial B_r(x^0)} u\nabla u\cdot\nu d\mathcal{S} + \frac92 r^{-5}\int_{\partial B_r(x^0)} u^2 d\mathcal{S}.
	\end{aligned}
	\end{equation}
	Notice that we use the relationship
	$$\int_{B_r(x^0)} |\nabla u|^2 dx = \int_{\partial B_r(x^0)} u\nabla u\cdot\nu d\mathcal{S},$$
	which is approximated by
	$$\int_{B_r(x^0)} \nabla u^\pm \cdot \nabla(\max\{u^\pm-\epsilon,0\}^{1+\epsilon}) dx = \int_{\partial B_r(x^0)} \max\{u^\pm - \epsilon,0\}^{1+\epsilon}\nabla u^\pm\cdot\nu d\mathcal{S}$$
	while $\epsilon\rightarrow0$.
	
	Now for small $\kappa$ and $\eta_\kappa(t):=\max\{0,\min\{1,\frac{r-t}{\kappa}\}\}$, we take after approximation $\bm\phi_\kappa(x)=\eta_\kappa(|x-x^0|)(x-x^0)\in W_0^{1,2}(B_r(x^0);\mathbb{R}^2)$ as a test function in the definition of the variational solution $u$. We obtain
	\begin{equation*}
	\begin{aligned}
	0 &= \int_{D} \left( |\nabla u|^2 - x_2\chi_{\{u\neq0\}} \right) \left( 2\eta_\kappa(|x-x^0|)+\eta_\kappa'(|x-x^0|)|x-x^0| \right)dx \\
	& \quad -2\int_{D} \left( |\nabla u|^2 \eta_\kappa(|x-x^0|) + \left( \nabla u \cdot \frac{x-x^0}{|x-x^0|} \right)^2 \eta_\kappa'(|x-x^0|)|x-x^0| \right)dx \\
	& \quad - \int_{D} x_2\eta_\kappa(|x-x^0|) \chi_{\{u\neq0\}} dx
	\end{aligned}
	\end{equation*}
	and hence we get
	\begin{equation*}
	0 = 2r\int_{\partial B_r(x^0)}(\nabla u\cdot\nu)^2 d\mathcal{S} -r\int_{\partial B_r(x^0)} \left( |\nabla u|^2 + (-x_2)\chi_{\{u\neq0\}} \right)d\mathcal{S} -\int_{B_r(x^0)} 3x_2\chi_{\{u\neq0\}} dx
	\end{equation*}
	by passing the limit $\kappa\rightarrow0$. Plugging this into $\frac{d}{dr}M(r)$, we obtain that for a.e. $r\in(0,R/2)$,
	\begin{equation*}
	\begin{aligned}
	\frac{d}{dr}M(r)&=\frac{2}{r^3}\int_{\partial B_r(x^0)} (\nabla u\cdot\nu)^2 d\mathcal{S} -\frac6{r^4}\int_{\partial B_r(x^0)} u\nabla u\cdot\nu d\mathcal{S} +\frac9{2r^5}\int_{\partial B_r(x^0)} u^2 d\mathcal{S} \\
	&= 2r^{-3}\int_{\partial B_r(x^0)} \left( \nabla u\cdot\nu -\frac32\frac{u}r \right)^2 d\mathcal{S}.
	\end{aligned}
	\end{equation*}
\end{proof}

Now we come to the scaling argument and define the blow-up sequence at $x^0$:
\begin{equation} \label{blowup2}
u_k(x)=\frac{u(x^0+r_kx)}{r_k^{3/2}}
\end{equation}
for $r_k\rightarrow0+$ and $x\in B_{R/r_k}(0)$ such that $x^0+r_kx\in B_R(x^0)\Subset D$. We will first give a key observation for $u_k(x)$ in Lemma \ref{lem3} that the blow-up sequences of the two phases $u_k^\pm$ converge to the positive part and the negative part of $u_0:=\lim_{r_k\rightarrow0+} u_k$ respectively, which is untouched in the one-phase works.

\begin{lem} \label{lem3}
	Let $u$ be a (local) variational solution of (\ref{eq1.1}) with $\lambda=0$ satisfying Assumption \ref{assume2} and $x^0\in\Gamma_{\rm tp}$ be a stagnation point. Let $u_k$ be a blow-up sequence of $u$ at $x^0$ that converges weakly in $W_{\rm{loc}}^{1,2}(\mathbb{R}^2)$ to a blow-up limit $u_0$. Then $u_k$ converges strongly to $u_0$ in $\mathbb{R}^2$, and $u_0^+$ is a homogeneous function of degree $\frac32$. Moreover, if we denote the limit functions as $V(x)$ and $W(x)$ in $W_{\rm{loc}}^{1,2}(\mathbb{R}^2)$ such that
	$$u_k^+:=\frac{u^+(x^0+r_kx)}{r_k^{3/2}}\rightarrow V(x)$$
	and
	$$u_k^-:=\frac{u^-(x^0+r_kx)}{r_k^{3/2}}\rightarrow W(x)$$
	as $r_k\rightarrow0+$, then
	$$V=u_0^+=\max\{u_0,0\} \quad \text{and} \quad W=u_0^-=\max\{-u_0,0\}.$$
\end{lem}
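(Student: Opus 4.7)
The plan is to mirror the blow-up analysis of Lemmas \ref{lem1} and \ref{lem2}, but now to treat $u^+$ and $u^-$ on equal footing, since both share the $\tfrac{3}{2}$-order decay dictated by the strong Bernstein estimate $|\nabla u|^2\le Cx_2^-$. First I would establish strong $W^{1,2}_{\rm loc}$-convergence of $u_k$ to $u_0$: testing against $\max\{u_k^\pm-\varepsilon,0\}^{1+\varepsilon}\eta$ and letting $\varepsilon\to 0^+$ (as in the proofs of Propositions \ref{mono1} and \ref{mono2}) yields the approximation identity
\[
\int_{\mathbb{R}^2}|\nabla u_k|^2\,\eta\,dx=-\int_{\mathbb{R}^2} u_k\,\nabla u_k\cdot\nabla\eta\,dx
\]
for every $\eta\in C_c^1(\mathbb{R}^2)$. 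The weak convergence $u_k\rightharpoonup u_0$ in $W^{1,2}_{\rm loc}$, combined with Rellich compactness, lets the right hand side pass to $-\int u_0\,\nabla u_0\cdot\nabla\eta\,dx=\int|\nabla u_0|^2\eta\,dx$; together with weak lower semicontinuity this promotes the convergence to strong in $W^{1,2}_{\rm loc}$.

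Next, homogeneity of $u_0$ follows from the monotonicity formula for $M$ just established. Rescaling that identity gives
\[
M(r_k\sigma)-M(r_k\rho)=2\int_{B_\sigma\setminus B_\rho}|x|^{-5}\Bigl(\nabla u_k\cdot x-\tfrac{3}{2}u_k\Bigr)^2\,dx,
\]
and Assumption \ref{assume2} forces $M$ to be monotone and bounded, so the left hand side tends to $0$ as $r_k\to 0$. Strong convergence then forces $\nabla u_0\cdot x=\tfrac{3}{2}u_0$ a.e.\ in every annulus $B_\sigma\setminus B_\rho$, so $u_0$ is $\tfrac{3}{2}$-homogeneous; since positive $\tfrac{3}{2}$-homogeneity is preserved by the operations $t\mapsto t^\pm$, the same is true of $u_0^\pm$.

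For the identification $V=u_0^+$, $W=u_0^-$, I would upgrade the convergence to locally uniform. The scaled Bernstein bound $|\nabla u_k(x)|^2=r_k^{-1}|\nabla u(x^0+r_k x)|^2\le C\,x_2^-$ shows that $\{u_k\}$ is uniformly Lipschitz on every compact subset of $\mathbb{R}^2$, and Arzel\`a--Ascoli yields, along a further subsequence, $u_k\to u_0$ locally uniformly and hence pointwise everywhere. By continuity of $t\mapsto t^\pm$ we then have $u_k^\pm\to u_0^\pm$ locally uniformly; the weak limits $V,W$ postulated in the statement must therefore coincide with $u_0^\pm$. Strong $W^{1,2}_{\rm loc}$-convergence of $u_k^\pm$ separately is obtained by repeating the first step's argument applied to the one-phase problem satisfied by $u^+$ and $u^-$ (cf.\ Proposition \ref{prop0}).

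The main obstacle is that the pointwise orthogonality $u_k^+\,u_k^-\equiv 0$ does not survive weak limits on its own: weak limits $V,W\ge 0$ could in principle have overlapping support, in which case $u_0=V-W$ would fail to be the canonical positive/negative decomposition. The strong Bernstein hypothesis $|\nabla u|^2\le Cx_2^-$ (rather than just the one-sided $|\nabla u^+|^2\le Cx_2^-$ used in Section 2) is precisely what rescues us: after scaling it furnishes uniform Lipschitz control, upgrades weak to locally uniform convergence, and transfers the disjointness of $\{u_k^+>0\}$ and $\{u_k^-<0\}$ to the limit. This is the step where the hypothesis $\lambda=0$ is decisive.
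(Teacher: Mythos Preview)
Your proposal is correct. For the strong $W^{1,2}_{\rm loc}$-convergence and the $\tfrac{3}{2}$-homogeneity you follow exactly the route the paper takes (the paper in fact omits these and refers back to Lemma~\ref{lem2}).

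For the identification $V=u_0^+$, $W=u_0^-$ you take a genuinely different path from the paper. You exploit the two-sided Bernstein estimate to obtain the scaled Lipschitz bound $|\nabla u_k|^2\le Cx_2^-$, invoke Arzel\`a--Ascoli for locally uniform convergence of $u_k$, and then read off $u_k^\pm\to u_0^\pm$ pointwise from the continuity of $t\mapsto t^\pm$. The paper instead argues at the $L^2$ level: it observes that the weak $W^{1,2}_{\rm loc}$-limits $V,W\ge 0$ inherit disjoint supports from $u_k^\pm$ (since $u_k^+u_k^-\equiv 0$ and $u_k^\pm\to V,W$ strongly in $L^2_{\rm loc}$ by Rellich), and then does a short case analysis on the sets $\{u_0>0\}$, $\{u_0<0\}$, $\{u_0=0\}$, using that $u_0=V-W$ together with $VW=0$ forces $V=u_0^+$ and $W=u_0^-$. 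Your approach is more direct and yields locally uniform convergence as a free byproduct; the paper's avoids compactness in $C^0$ altogether and is the minimal algebraic observation that two nonnegative disjointly-supported functions whose difference is $u_0$ must be $u_0^\pm$.
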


\begin{proof}
	The proof of the strong convergence of $u_k$ to a $\frac32$-homogeneous function $u_0$ in $W_{\rm{loc}}^{1,2}(\mathbb{R}^2)$ follows along the same argument as in Lemma \ref{lem2} and is omitted here, and we only prove the statements for $V$ and $W$. Notice that we have $u_0=V-W=u_0^+ - u_0^-$, and $V$ and $W$ have disjoint support. It is straightforward to deduce that for $V$, in $\{u_0^+>0\}$,
	$$u_0^+ = u_0 = \lim_{r_k\rightarrow0+}u_k = \lim_{r_k\rightarrow0+} u_k^+ = V$$
	and in $\{u_0^->0\}$,
	$$u_0^+ = 0 = V.$$
	It remains to show that $u_0^+ = V$ in $\{u_0\equiv0\}$. In fact, if there is a point $y\in\{u_0\equiv0\}$ such that $V(y)>0$, then $W(y)= V(y) - u_0(y)>0$, a contradiction to the fact that $V$ and $W$ have disjoint support. It concludes the proof.
\end{proof}

Proposition \ref{density2} gives the description of possible values of $M(0+)$, and the corresponding blow-up limits allow the singularity analysis at $x^0$ and implies the asymptotic behavior of the free boundaries near the stagnation point.

\begin{prop}(Densities for $u^\pm$) \label{density2}
	Let $u$ be a variational solution of (\ref{eq1.1}) in $D$ with $\lambda=0$ satisfying Assumption \ref{assume2} and $x^0\in\Gamma_{\rm tp}$ be a stagnation point. Then the weighted density $M(0+)$ has three possible values
	$$M(0+)=\lim_{r\rightarrow0+}\int_{B_1} x_2^- \chi_{\{u_r\neq0\}}dx\in\left\{ 0, \ \frac{\sqrt{3}}{2}, \ 1 \right\}.$$
	More precisely, the weighted density can be classified as follows.
	
	(i) (Non-trivial blow-up limit) If
	\begin{equation*}
	u_k(x)=\frac{u(x^0+rx)}{r^{3/2}}\rightarrow u_0(x)=
	\begin{cases}
	\frac{\sqrt{2}}{3}\rho^{3/2}\cos(\frac32\theta+\frac{3\pi}{4}), \quad -\frac{5\pi}{6}<\theta<-\frac{\pi}{6}, \\
	0, \qquad\qquad\qquad\qquad\quad \text{otherwise}
	\end{cases}
	\end{equation*}
	or
	\begin{equation*}
	u_k(x)=\frac{u(x^0+rx)}{r^{3/2}}\rightarrow u_0(x)
	\begin{cases}
	-\frac{\sqrt{2}}{3}\rho^{3/2}\cos(\frac32\theta+\frac{3\pi}{4}), \quad -\frac{5\pi}{6}<\theta<-\frac{\pi}{6}, \\
	0, \qquad\qquad\qquad\qquad\quad\ \; \text{otherwise}
	\end{cases}
	\end{equation*}
	as $r\rightarrow0+$ strongly in $W_{\rm{loc}}^{1,2}(\mathbb{R}^2)$ and locally uniformly in $\mathbb{R}^2$, where $x=(\rho\cos\theta, \rho\sin\theta)$, then 
	$$M(0+)=\frac{\sqrt{3}}{2}.$$
	In this case, either
	$$M_+(0+)=\int_{B_1}x_2^- \chi_{\{x:-\frac{5\pi}{6}<\theta<-\frac{\pi}{6}\}} dx=\frac{\sqrt{3}}{2}, \quad M_-(0+)=0,$$
	or
	$$M_+(0+)=0, \quad M_-(0+)=\int_{B_1}x_2^- \chi_{\{x:-\frac{5\pi}{6}<\theta<-\frac{\pi}{6}\}} dx=\frac{\sqrt{3}}{2}.$$
	
	(ii) (Trivial blow-up limit) If
	$$u_k(x)=\frac{u(x^0+rx)}{r^{3/2}}\rightarrow u_0(x)\equiv0$$
	as $r\rightarrow0+$ strongly in $W_{\rm{loc}}^{1,2}(\mathbb{R}^2)$ and locally uniformly in $\mathbb{R}^2$,then
	$$M(0+)\in\{0,1\}$$
	has two possible values. In this case, we have
	$$M_+(0+)=0, \quad M_-(0+)=0,$$
	or
	$$M_+(0+)=\int_{B_1}x_2^- dx=1, \quad M_-(0+)=0,$$
	or
	$$M_+(0+)=0, \quad M_-(0+)=\int_{B_1}x_2^- dx=1.$$
\end{prop}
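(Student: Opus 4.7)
\textbf{Proof proposal for Proposition \ref{density2}.}

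The plan is to mimic the structure of Proposition \ref{density}, taking advantage of the additional symmetry provided by $\lambda=0$: both $u^+$ and $u^-$ now scale at the same order $3/2$, so I will work directly with $u_k(x) = u(x^0+r_kx)/r_k^{3/2}$ and its decomposition into $u_0^\pm$ from Lemma \ref{lem3}. First I apply the monotonicity formula to $u$: integrating from $r_k\rho$ to $r_k\sigma$ yields
\begin{equation*}
M(r_k\sigma)-M(r_k\rho) = 2\int_{B_\sigma\setminus B_\rho} |x|^{-5}\bigl(\nabla u_k\cdot x - \tfrac{3}{2}u_k\bigr)^2\, dx,
\end{equation*}
whose left-hand side tends to $0$, forcing the blow-up limit $u_0$ to be $3/2$-homogeneous. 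Combined with harmonicity in $\{u_0\neq 0\}$, each connected component of $\{u_0>0\}$ and of $\{u_0<0\}$ is a sector (cone) of opening exactly $2\pi/3$ with vertex at the origin. Moreover, rearranging the $M(r)$-identity gives $M(0+) = \lim\int_{B_1} x_2^-\chi_{\{u_k\neq 0\}}\, dx$, and analogous identities for $M_\pm(0+)$ via $M(r)=M_+(r)+M_-(r)$.

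Next I exploit the standing assumption (Section 1.5) that $u\equiv 0$ on $B_R(x^0)\cap\{x_2\geq 0\}$ when $\lambda=0$, so $\mathrm{supp}(u_0^\pm)\subset\{x_2\leq 0\}$. The available angular aperture in $\{x_2<0\}$ is $\pi$, while each nontrivial component of $\{u_0>0\}\cup\{u_0<0\}$ occupies $2\pi/3$ and the positive and negative components have disjoint interiors. Two such cones would require $4\pi/3>\pi$ of angular space, which is impossible. Therefore \emph{at most one of $u_0^+$ and $u_0^-$ is nontrivial}, which is the pivotal step of the argument and the one I expect to be the technical crux.

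In case (i) (exactly one phase nontrivial), assume without loss of generality $u_0^-\equiv 0$. Then $\partial\{u_0>0\}$ is entirely a one-phase free boundary for $u_0^+$, and repeating the test-function computation from the proof of Proposition \ref{density} with $\bm\phi=\eta\bm n$ yields $|\nabla u_0^+|^2=-x_2$ on both rays bounding the sector. Writing $u_0^+=C\rho^{3/2}\cos(\tfrac{3}{2}\theta+\zeta_0)$ in the sector $(\Theta,\Theta+\tfrac{2\pi}{3})$, the two free boundary relations force $\sin\Theta=\sin(\Theta+\tfrac{2\pi}{3})$, hence $\Theta=-\tfrac{5\pi}{6}$ and $C=\tfrac{\sqrt 2}{3}$; this is the classical symmetric Stokes corner. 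A direct integration gives $M_+(0+)=\int_{B_1}x_2^-\chi_{\{-5\pi/6<\theta<-\pi/6\}}\,dx=\tfrac{\sqrt 3}{2}$, $M_-(0+)=0$, and $M(0+)=\tfrac{\sqrt 3}{2}$. The symmetric case with $u_0^+\equiv 0$ is identical.

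In case (ii) ($u_0\equiv 0$), I plug into the variational identity (\ref{va1}) applied separately to $u^+$ and to $u^-$, passing to the limit to obtain
\begin{equation*}
0=\int_{\mathbb{R}^2}(\phi_2+x_2\,\mathrm{div}\,\bm\phi)\chi_0^\pm\,dx,
\end{equation*}
where $\chi_0^\pm$ are the $L^1_{\mathrm{loc}}$-limits of $\chi_{\{u_k^\pm>0\}}$, supported in $\{x_2\leq 0\}$. Integration by parts shows that each $\chi_0^\pm$ is locally constant in $\{x_2<0\}$, hence equal to $0$ or $1$ there; since $\chi_0^+$ and $\chi_0^-$ have disjoint support, they cannot both equal $1$. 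Computing $M_\pm(0+)=\int_{B_1}x_2^-\chi_0^\pm\,dx$ gives the three stated combinations, and summing yields $M(0+)\in\{0,1\}$. The main obstacle throughout is the geometric exclusion in the second paragraph: ensuring the blow-up convergence preserves the structural dichotomy between one-phase and two-phase boundary parts, so that the free boundary relation $|\nabla u_0^+|^2=-x_2$ is inherited on \emph{both} bounding rays of a lone cone rather than on just one of them; once this is secured, the rest is bookkeeping parallel to Section 2.2.
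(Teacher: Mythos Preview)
Your proposal follows essentially the same route as the paper's proof: monotonicity gives $3/2$-homogeneity, the ODE forces each nontrivial component of $\{u_0\neq0\}$ to be a $2\pi/3$-cone, the constraint $u_0\equiv0$ in $\{x_2\geq0\}$ excludes two such cones, and then one computes the free-boundary relation on the rays of the lone cone.

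The one substantive difference is your choice of variational identity. You invoke the $u^+$-only identity (\ref{va1}) from Proposition \ref{prop0}, which requires test functions in $W_0^{1,2}(D\cap\{u\geq0\};\mathbb{R}^2)$; the paper instead passes to the limit in the \emph{full} two-phase identity for $u$ (Definition \ref{def2}), which admits $\bm\phi\in W_0^{1,2}(\mathbb{R}^2;\mathbb{R}^2)$ with no support restriction. This directly resolves the concern you raise at the end: once $u_0^-\equiv0$ is known, plugging $\bm\phi=\eta\bm n$ with $\eta\in W_0^{1,2}(B_\tau(z))$ into the full identity gives $|\nabla u_0^+|^2=-x_2(1-I_0)$ on each ray without needing the test function to avoid $\{u<0\}$. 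Your approach via (\ref{va1}) would need an extra argument that, for small $r_k$, the rescaled support of $\bm\phi$ lies in $\{u\geq0\}$, which is not automatic here since $\{u_k<0\}$ need not shrink to a half-plane as in Section 2.2. The same issue recurs in your case (ii): applying (\ref{va1}) separately to $u^\pm$ to get $\chi_0^\pm$ constant requires test functions with unrestricted support in $\{x_2<0\}$, whereas the paper gets constancy of $\chi_0=\chi_0^++\chi_0^-$ from the full identity and then appeals to the disjointness of $\chi_0^\pm$ for the splitting of $M_\pm(0+)$.
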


\begin{proof}
	Consider a blow-up sequence $u_k$ as in (\ref{blowup2}), where $r_k\rightarrow0+$, with the blow-up limit $u_0=u_0^+ - u_0^-$. The scaling argument and the $\frac32$-homogeneity of $u_0$ gives straightforward that
	$$M(0+)=\lim_{r\rightarrow0+}\int_{B_1} x_2^- \chi_{\{u_r\neq0\}}dx.$$
	Take $\bm\phi=(\phi_1,\phi_2)\in W_0^{1,2}(\mathbb{R}^2;\mathbb{R}^2)$ and $\bm\phi_k(y):=\bm\phi(\frac{y-x^0}{r_k})=\bm\phi(x)$ for $y=x^0+r_kx$. Notice that $\bm\phi_k(y)\in W_0^{1,2}(\mathbb{R}^2;\mathbb{R}^2)$. We use $\bm\phi_k=(\phi_{k,1},\phi_{k,2})$ as the test function in the definition of the variational solution $u$, and obtain
	\begin{equation*}
	\begin{aligned}
	0&=\int_{B_r(x^0)} \left( |\nabla u|^2 div \bm\phi_k - 2\nabla u D\bm\phi_k\nabla u - (\phi_{k,2}+y_2 div \bm\phi_k)\chi_{\{u\neq0\}} \right) dy \\
	&= r_k^2 \int_{B_{r/r_k}} \left( |\nabla u_k|^2 div\bm\phi - 2\nabla u_k D\bm\phi \nabla u_k - (\phi_2+x_2 div\bm\phi)\chi_{\{u_k\neq0\}} \right) dx.
	\end{aligned}
	\end{equation*}
	Passing the limit $r_k\rightarrow0+$, the strong convergence of $u_k$ to $u_0$ in $W_{\rm{loc}}^{1,2}(\mathbb{R}^2)$ and the compact embedding from $BV$ to $L^1$ gives
	\begin{equation} \label{eq3}
	0=\int_{\mathbb{R}^2} \left( |\nabla u_0|^2 div\bm\phi -2\nabla u_0 D\bm\phi \nabla u_0 -(\phi_2+x_2 div\bm\phi)\chi_0 \right) dx,
	\end{equation}
	where $\chi_0$ is the strong $L_{\rm{loc}}^1$ limit of $\chi_{\{u_k\neq0\}}$ along a subsequence. The values of the function $\chi_0$ are almost everywhere in $\{0,1\}$, and the locally uniform convergence of $u_k^+$ to $u_0^+$ implies that $\chi_0=1$ in $\{u_0\neq0\}$. Moreover, if we denote $\chi_0^\pm$ the strong $L_{\rm{loc}}^1$ limit of $\chi_{\{u_k^\pm>0\}}$, then $\int_B \chi_0 dx = \int_B \left( \chi_0^+ + \chi_0^- \right)dx$ and $\int_B \chi_0^+ \chi_0^- dx = 0$ for any domain $B\Subset\mathbb{R}^2$.
	
	Utilizing the $\frac32$-homogeneity of $u_0$, we write the harmonic equation of $u_0(x_1,x_2)=u_0(\rho,\theta)=\rho^{3/2}f(\theta)$ in polar coordinate for $(x_1,x_2)=(\rho\cos\theta,\rho\sin\theta)$,
	$$\frac34 \rho^{-1/2}f(\theta) + \frac1r\frac32 \rho^{1/2}f(\theta) + \rho^{-1/2}f''(\theta)=0 \quad \text{in} \quad \{u_0\neq0\}.$$
	That is,
	$$f''(\theta)+\frac94f(\theta)=0 \quad \text{in} \quad \{u_0\neq0\}.$$
	Consequently, each connected component of $\{u_0>0\}$ and $\{u_0<0\}$ is a cone with vertex at the origin and of opening angle $\frac{2\pi}{3}$. Since $u\equiv0$ in $\{x_2\geq0\}$, at least one of $u_0^\pm$ must vanish. Note also that $\chi_0$ is a constant, denoted as $I_0$, in each open connected set $G\subset\{u_0=0\}$ that does not intersect the axis $\{x_2=0\}$.
	
	Consider the first case when $\{u_0^+>0\}\neq\varnothing$ and $\{u_0^->0\}=\varnothing$. In this case $\{u_0^+>0\}$ is a cone as described above. The case when $\{u_0^->0\}\neq\varnothing$ and $\{u_0^+>0\}=\varnothing$ is symmetric.
	
	Let $z\in\partial\{u_0^+>0\}\backslash\{0\}$ be an arbitrary point and the normal to $\partial\{u_0^+>0\}$ is the constant value $\bm n$ in $B_{\tau}(z)\cap\{u_0^+>0\}$ for some $\tau>0$. Plugging in $\bm\phi(x):=\eta(x)\bm n$ into (\ref{eq3}), where $\eta\in W_0^{1,2}(B_{\tau}(z))$ is arbitrary. Integrating by parts, it follows that
	\begin{equation} \label{eq4}
	0=-\int_{B_{\tau}(z)\cap\partial\{u_0^+>0\}} \left( |\nabla u_0^+|^2 + x_2(1-I_0) \right)\eta d\mathcal{S}.
	\end{equation}
	Here $I_0$ denotes the constant value of $\chi_0$ in the respective connected component of $\{u_0=0\}^{\circ}\cap\{x_2\neq0\}$. Note that by Hopf's lemma, $\nabla u_0^+\cdot\bm n\neq0$ on $B_{\tau}(z)\cap\{u_0^+>0\}$. It follows therefore that $I_0\neq1$, and necessarily $I_0=0$. We deduce from (\ref{eq4}) that
	$$|\nabla u_0^+|^2=-x_2 \quad \text{on} \quad B_{\tau}(z)\cap\partial\{u_0^+>0\}.$$
	Write $u_0^+(\rho,\theta)=C\rho^{3/2}\cos(\frac32\theta+\zeta_0)$ and assume that the cone $\{u_0^+>0\}$ lies between $\Theta$ and $\Theta+\frac{2\pi}{3}$, we obtain
	\begin{equation*}
	\frac94 C^2 = -\sin\theta \quad \text{on} \quad B_{\tau}(z)\cap\partial\{u_0^+>0\}.
	\end{equation*}
	
	Hence $\sin\Theta=\sin(\Theta+\frac{2\pi}{3})$, which means that $\Theta=-\frac{5\pi}{6}$. Then it is straight forward to compute that $C=\frac{\sqrt{2}}{3}$. See Figure \ref{corner1-3}.
	
	\begin{figure}[!h] 
		\includegraphics[width=85mm]{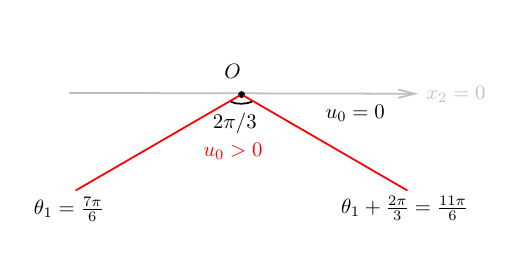}
		\caption{The asymptotic behavior when $\lambda=0$.}
		\label{corner1-3}
	\end{figure}
	
	Consider now the case $u_0^\pm$ both vanish, which means $u_0\equiv0$. It follows from (\ref{eq3}) that
	$$0=-\int_{\mathbb{R}^2}\left( \phi_2 + x_2div\bm\phi \right)\chi_0 dx = \int_{\mathbb{R}^2} x_2\bm\phi\cdot\nabla \chi_0 dx$$
	for $\bm\phi(x)\in W_0^{1,2}(\mathbb{R}^2;\mathbb{R}^2)$, which yields that $\chi_0$ is constant in $\{x_2<0\}$. Its value may be either $0$ in which case $M(0+)=0$, or $1$ in which case $M(0+)=\int_{B_1}x_2^- dx=1$. The disjoint support of non-negative functions $\chi_0^\pm$ gives the remaining part in the theorem for the values of $M_\pm(0+)$.
\end{proof}

It is remarkable that Proposition \ref{density} says if $x^0\in\Gamma_{\rm tp}$ is a two-phase point, then at least one of $M_\pm(0+)$ must vanish. We will use this fact in the following section to prove that $x^0$ cannot be a two-phase free boundary point.

\subsection{Degenerate stagnation points}

In this section we define the degenerate stagnation points for $u^\pm$ as before. Under the strong Bernstein assumption we will prove that the two-phase stagnation point $x^0$ cannot be a degenerate stagnation point for both $u^\pm$, namely, neither of the weighted densities $M_\pm(0+)$ can vanish, which is a contradiction with Proposition \ref{density2}.

We first recall the definition of degenerate stagnation points.

\begin{define}
	Let $u$ be a variational solution of (\ref{eq1.1}) in $D$ with $\lambda=0$ and $x^0\in\Gamma_{\rm tp}$ be a stagnation point. The set $D_\pm^u$ of degenerate stagnation points is defined by
	$$D_\pm^u:= \left\{ x^0\in \Gamma_{\rm tp} \ \Bigg| \ |\nabla u^\pm(x^0)|=0 \ \text{and} \ \frac{u^\pm(x^0+rx)}{r^{3/2}}\rightarrow0 \ \text{strongly in $W_{\rm{loc}}^{1,2}(\mathbb{R}^2)$ as $r\rightarrow0+$} \right\}.$$
	Otherwise we say $x^0$ is non-degenerate for $u^\pm$.
\end{define}

Proposition \ref{prop2} excludes the case that $M_\pm(0+)=0$.

\begin{prop} \label{prop2}
	Let $u$ be a weak solution of (\ref{eq1.1}) in $D$ with $\lambda=0$ satisfying Assumption \ref{assume2} and $x^0\in\Gamma_{\rm tp}$ be a stagnation point. Moreover, suppose the strong Bernstein estimate holds that
	$$|\nabla u|^2\leq x_2^- \quad \text{in} \quad D.$$
	Then $M_\pm(0+)=0$ implies that $u^\pm\equiv0$ in some open ball containing $x^0$. In other words, $M_\pm(0+)\neq0$.
\end{prop}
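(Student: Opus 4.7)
\emph{Proof proposal.} My plan is to reduce the claim to the one-phase argument already carried out in Proposition~\ref{prop1}. By the sign symmetry $u\mapsto -u$ of (\ref{eq1.1}) in the case $\lambda=0$, it suffices to treat $M_+(0+)=0$; the proof for $M_-(0+)=0$ is identical after swapping $u^+$ and $u^-$. I proceed by contradiction and assume $M_+(0+)=0$; since $x^0\in\Gamma_{\rm tp}\subset\partial\{u>0\}$, the origin lies in $\overline{\{u_k^+>0\}}$ for every $k$, where $u_k^+(x):=u^+(x^0+r_kx)/r_k^{3/2}$.

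First I would verify, by inspecting Proposition~\ref{density2}, that whenever $M_+(0+)=0$ the sequence $u_k^+$ converges to $0$ strongly in $W^{1,2}_{\rm loc}(\mathbb R^2)$ along some $r_k\to 0+$. Since $\Delta u^+$ is a non-negative Radon measure, a trace argument on a.e.\ sphere then yields $\Delta u_k^+(B_2)\to 0$. On the other hand, the additional free boundary inequality $|\nabla u^+|^2\geq -x_2$ on $\partial\{u>0\}$, established in Proposition~\ref{prop0} and extended to the $\lambda=0$ case in Appendix~A, rescales to $|\nabla u_k^+|\geq\sqrt{-x_2}$ on $\partial_{\rm red}\{u_k^+>0\}$. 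Combined with the identity $\Delta u_k^+=|\nabla u_k^+|\,\mathcal{H}^1\mres\partial_{\rm red}\{u_k^+>0\}$, this gives
\begin{equation*}
0\;\longleftarrow\;\Delta u_k^+(B_2)\;\geq\;\int_{B_2\cap\partial_{\rm red}\{u_k^+>0\}}\sqrt{-x_2}\,d\mathcal{S}.
\end{equation*}

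I would then copy the box argument of Proposition~\ref{prop1}: set $A=(-1,1)\times(-1,0)$ and pick a connected component $V_k$ of $\{u_k^+>0\}\cap A$ whose closure meets the origin, which exists because $0\in\partial\{u_k^+>0\}$ and which in turn must meet $\partial A$ by the maximum principle. Let $\delta_k:=\max\{-x_2:x\in\overline{V_k}\cap\partial A\}$. If $\delta_k$ stays bounded away from $0$, a straightforward length estimate on the piece of $\partial_{\rm red}\{u_k^+>0\}$ enclosing $V_k$ provides a uniform positive lower bound on the right-hand side above, contradicting its convergence to $0$. If $\delta_k\to 0$, applying Green's identity inside $V_k\cap A$ (where $u_k^+$ is harmonic), decomposing the boundary and using the strong Bernstein bound $|\nabla u_k^+|\leq\sqrt{-x_2}$ on $V_k\cap\partial A$ together with the reversed inequality $|\nabla u_k^+|\geq\sqrt{-x_2}$ on $A\cap\partial V_k$, I would obtain
\begin{equation*}
\int_{A\cap\partial V_k}\sqrt{-x_2}\,d\mathcal{S}\;\leq\;\int_{V_k\cap\partial A}\sqrt{-x_2}\,d\mathcal{S},
\end{equation*}
which is impossible since the right-hand side tends to $0$ with $\delta_k$ while the free boundary on the left must descend from the origin into $V_k$ and back. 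The only step that is genuinely two-phase, and so where I expect the main subtlety, is the verification that Proposition~\ref{prop0} carries over to $\lambda=0$ on the two-phase portion $\Gamma_{\rm tp}$ of $\partial\{u>0\}$; once that is in place, the geometric contradiction follows verbatim from the one-phase proof.
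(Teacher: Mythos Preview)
Your proposal is correct and follows essentially the same route as the paper's own proof, which explicitly refers back to Proposition~\ref{prop1} and repeats the box argument with $A=(-1,1)\times(-1,0)$ verbatim. Your version is in fact slightly more careful: you isolate the one genuinely two-phase step (that the inequality $|\nabla u^+|^2\geq -x_2$ must hold on the $\Gamma_{\rm tp}$ portion of $\partial\{u>0\}$ as well when $\lambda=0$) and correctly point to Appendix~A for its justification, whereas the paper's proof just invokes ``the free boundary condition'' without comment.
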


\begin{proof}
	We state for the case $u_0^+\equiv0$. As in Proposition \ref{prop1}, argue by contradiction that the blow-up sequence
	$$u_k^+(x):=\frac{u^+(x^0+r_kx)}{r_k^{3/2}}$$
	at the stagnation point $x^0\in\Gamma_{\rm tp}$ converges weakly in $W_{\rm{loc}}^{1,2}(\mathbb{R}^2)$ to a blow-up limit $u_0^+\equiv0$. Hence we have
	\begin{equation*}
	0 \leftarrow \Delta u_k^+(B_2)\geq\int_{B_2\cap\partial_{red}\{u_k^+>0\}} \sqrt{-x_2} d\mathcal{S} \quad \text{as} \quad r_k\rightarrow0+.
	\end{equation*}
	On the other hand, there is at least one connected component $V_k$ of $\{u_k>0\}$ touching the origin and containing a point $x^k\in\partial A$ for $A=(-1,1)\times(-1,0)$. Consequently,
	$$\max \{-x_2: x\in V_k\cap\partial A\}\rightarrow0,$$
	and the free boundary condition as well as $|\nabla u|^2\leq x_2^-$ gives
	$$0=\Delta u_k(V_k\cap A)\leq\int_{V_k\cap\partial A}\sqrt{-x_2} d\mathcal{S} + \int_{A\cap\partial_{red}V_k}-\sqrt{-x_2} d\mathcal{S},$$
	a contradiction.
\end{proof}

\begin{rem}
	Proposition \ref{prop2} gives immediately that none of the cases in Proposition \ref{density2} is true. In fact, the proof of Proposition \ref{prop2} implies that if $u_0^+=0$, then $u^+\equiv0$ in a small neighborhood of $x^0$, hence there is no free boundary $\partial\{u>0\}$ in a small neighborhood of $x^0$. Consequently, the foregoing assumption that $x^0\in\Gamma_{\rm tp}$ is unreasonable.
\end{rem}

Now we come to our conclusion for $\lambda=0$. The analysis for such one-phase free boundary point $x^0$ follows from the works \cite{VW11} and \cite{VW12} directly, and we omit the proof here.

\begin{thm}
	Let $u$ be a weak solution of (\ref{eq1.1}) in $D$ with $\lambda=0$ satisfying Assumption \ref{assume2}, and $x^0$ is a stagnation point on the free boundary. Suppose that $|\nabla u^+|^2\leq x_2^-$ in $D$. Then $x^0$ must be a one-phase free boundary point.
	
	To be specific, suppose moreover that $\{u=0\}$ has locally only finite many connected components. Then at each stagnation point $x^0$, we have $M_{x^0,u}(0+) = \int_{B_1} x_2^- \chi_{\{x:-\frac{5\pi}{6}<\theta<-\frac{\pi}{6}\}}$, and
	\begin{equation*}
	\frac{u(x^0+rx)}{r^{3/2}}\rightarrow
	\begin{cases}
	\frac{\sqrt{2}}{3}\rho^{3/2}\cos(\frac32\theta+\frac{3\pi}{4}), \quad -\frac{5\pi}{6}<\theta<-\frac{\pi}{6}, \\
	0, \qquad\qquad\qquad\qquad\ \ \text{otherwise}
	\end{cases}
	\end{equation*}
	or
	\begin{equation*}
	\frac{u(x^0+rx)}{r^{3/2}}\rightarrow
	\begin{cases}
	-\frac{\sqrt{2}}{3}\rho^{3/2}\cos(\frac32\theta+\frac{3\pi}{4}), \quad -\frac{5\pi}{6}<\theta<-\frac{\pi}{6}, \\
	0, \qquad\qquad\qquad\qquad\quad\ \text{otherwise}
	\end{cases}
	\end{equation*}
	strongly in $W_{\rm{loc}}^{1,2}(\mathbb{R}^2)$ and locally uniformly in $\mathbb{R}^2$. Moreover, the free boundary $\partial\{u>0\}$ or $\partial\{u<0\}$ is in a neighborhood of $x^0$ the union of two $C^1$-graphs with right and left tangents at $x^0$. See Figure \ref{corner2-3}. 
\end{thm}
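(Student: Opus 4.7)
The plan is to prove Theorem B by first ruling out, via contradiction, that the stagnation point $x^0$ can lie on the two-phase free boundary $\Gamma_{\rm tp}$, and then reducing the remaining one-phase situation to the classical Stokes-conjecture analysis of V$\check{a}$rv$\check{a}$ruc$\check{a}$--Weiss in \cite{VW11}.

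First I would suppose, towards a contradiction, that $x^0 \in \Gamma_{\rm tp}$, so that both $u^+$ and $u^-$ are nontrivial in every neighborhood of $x^0$. By the decomposition $M(r) = M_+(r) + M_-(r)$ together with Proposition \ref{density2}, the pair of weighted densities $(M_+(0+), M_-(0+))$ must fall in the explicit finite list
\[
\{(0,0),\ (\sqrt{3}/2,0),\ (0,\sqrt{3}/2),\ (1,0),\ (0,1)\},
\]
and in every admissible case at least one of $M_\pm(0+) = 0$. Next I would invoke Proposition \ref{prop2}: under the strong Bernstein hypothesis $|\nabla u|^2 \leq x_2^-$, which controls both phases simultaneously, $M_+(0+) = 0$ forces $u^+ \equiv 0$ in some ball centered at $x^0$, and symmetrically $M_-(0+) = 0$ forces $u^- \equiv 0$ in some ball centered at $x^0$. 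Either conclusion contradicts the defining requirement of $\Gamma_{\rm tp}$ that $u$ changes sign in every neighborhood of $x^0$. Hence $x^0 \notin \Gamma_{\rm tp}$, and therefore $x^0 \in \Gamma_{\rm op}^+ \cup \Gamma_{\rm op}^-$, which is the first conclusion of the theorem.

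For the asymptotic profile and the graph structure, once $x^0$ is known to be a one-phase free boundary point, the argument in the previous paragraph also provides that the opposite phase vanishes identically in a neighborhood of $x^0$. Without loss of generality $x^0 \in \Gamma_{\rm op}^+$ and $u^- \equiv 0$ locally, so $u = u^+$ satisfies the purely one-phase Bernoulli free boundary problem with gravity near $x^0$, together with the Bernstein estimate $|\nabla u^+|^2 \leq x_2^-$ and the assumption that $\{u = 0\}$ has only finitely many connected components. The explicit blow-up limit with weighted density $\sqrt{3}/2$ supported on the cone $\{-5\pi/6 < \theta < -\pi/6\}$, as well as the statement that $\partial\{u > 0\}$ is a union of two $C^1$-graphs meeting at $x^0$ with slopes $\pm 1/\sqrt{3}$, then follows directly from the Stokes conjecture resolved in \cite{VW11}. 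The opposite sign case $x^0 \in \Gamma_{\rm op}^-$ is identical up to reflection.

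The main obstacle lies in the first paragraph: one must be confident that Proposition \ref{density2} genuinely enumerates all densities (so that at least one $M_\pm(0+) = 0$ is forced), and that Proposition \ref{prop2} applies even in the subtle nontrivial cases such as $(M_+(0+), M_-(0+)) = (\sqrt{3}/2, 0)$, where $u^+$ exhibits a genuine Stokes corner while Proposition \ref{prop2} is invoked on the vanishing negative phase. This is precisely why the strong Bernstein estimate $|\nabla u|^2 \leq x_2^-$, controlling both phases uniformly, is imposed in the hypotheses, rather than the weaker one-sided bound of Assumption \ref{assume}. The scenarios excluded here are exactly those depicted in Figure \ref{corner3-1}, where $u^+$ and $u^-$ would share a common Stokes corner at $x^0$; the argument shows they simply cannot coexist.
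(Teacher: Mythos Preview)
Your proposal is correct and follows essentially the same approach as the paper: the paper likewise argues by contradiction that if $x^0\in\Gamma_{\rm tp}$ then Proposition~\ref{density2} forces at least one of $M_\pm(0+)=0$, whereupon Proposition~\ref{prop2} (under the strong Bernstein estimate) gives $u^\pm\equiv 0$ near $x^0$, contradicting the two-phase assumption; the remaining one-phase asymptotics and $C^1$-graph structure are then deferred to \cite{VW11,VW12} exactly as you do.
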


\begin{figure}[!h] 
	\includegraphics[width=150mm]{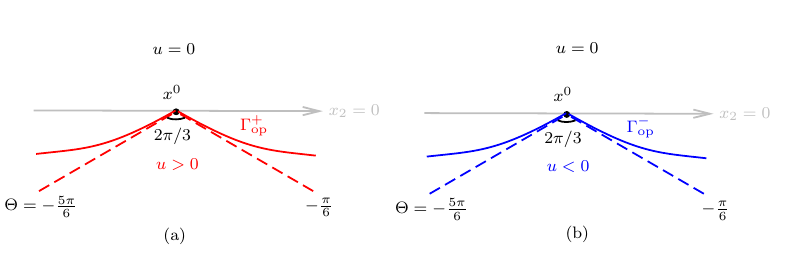}
	\caption{Stokes corner when $\lambda=0$.}
	\label{corner2-3}
\end{figure}

\appendix

\section{Additional free boundary conditions}

In Appendix A, we verify the following additional free boundary conditions for the variational solution $u$ of (\ref{eq1.1}) in $D$.

\begin{prop}
	Suppose $u$ is a variational solution of (\ref{eq1.1}) in $D$ with $\lambda\geq0$, namely,
	$$\int_{D} \left( |\nabla u|^2 div\bm\phi - 2\nabla u D\bm\phi \nabla u -(\phi_2+x_2 div\bm\phi)\chi_{\{u>0\}} + (-\phi_2 + (-x_2+\lambda^2)div\bm\phi)\chi_{\{u<0\}} \right)dx = 0$$
	for any $\bm\phi=(\phi_1,\phi_2)\in W_0^{1,2}(D;\mathbb{R}^2)$ and $\lambda\geq0$. Then, $u$ satisfies the additional free boundary conditions
	$$|\nabla u^+|^2\geq -x_2 \quad \text{on} \quad \partial\{u>0\}\cap D, \quad |\nabla u^-|^2\geq -x_2+\lambda^2 \quad \text{on} \quad \partial\{u<0\}\cap D.$$
\end{prop}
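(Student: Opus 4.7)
The strategy parallels the one-sided inner-variation argument used in the proof of Proposition \ref{prop0}. I prove only the bound $|\nabla u^+|^2\geq -x_2$ on $\partial\{u>0\}\cap D$; the bound for $u^-$ follows by the same reasoning after swapping the roles of the positive and negative phases and replacing $-x_2$ by $-x_2+\lambda^2$.

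First I would fix $t>0$ small and choose a test vector field $\bm\phi_t=(\phi_{1,t},\phi_{2,t})\in W_0^{1,2}(D;\mathbb{R}^2)$ supported in a neighborhood of $\partial\{u>0\}$ with $\bm\phi_t\cdot\nu_t\leq 0$ on $\partial\{u>t\}$, where $\nu_t$ denotes the outer unit normal of $\{u>t\}$. The induced family of diffeomorphisms $\Psi_\epsilon(x)=x+\epsilon\bm\phi_t(x)$ then contracts $\{u>t\}$ for $\epsilon>0$ small. I then introduce the one-sided competitor
$$u_{\epsilon,t}(y):=u^+(\Psi_\epsilon^{-1}(y))-u^-(y),$$
which moves only the positive phase of $u$ inward while leaving the negative phase $u^-$ untouched, so that $\{u_{\epsilon,t}>0\}=\Psi_\epsilon(\{u>0\})\subset\{u>0\}$.

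The central step is to compare $\mathcal{J}(u_{\epsilon,t})$ against the full inner variation $\mathcal{J}(u\circ\Psi_\epsilon^{-1})$. These two functionals agree outside the ``gap'' $\{u>0\}\setminus\Psi_\epsilon(\{u>0\})$; on the gap, $u_{\epsilon,t}\equiv 0$ while $u\circ\Psi_\epsilon^{-1}$ takes the value $u(\Psi_\epsilon^{-1}(y))\leq 0$. In the detached case both functions vanish on the gap, and in the overlapping case $u\circ\Psi_\epsilon^{-1}$ is strictly negative there, contributing the additional potential $(-y_2+\lambda^2)$ per unit area that is absent for $u_{\epsilon,t}$. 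In either case one obtains $\mathcal{J}(u_{\epsilon,t})\leq\mathcal{J}(u\circ\Psi_\epsilon^{-1})$ for $\epsilon>0$ small, and combining this with the variational identity (which gives $\frac{d}{d\epsilon}\big|_{\epsilon=0}\mathcal{J}(u\circ\Psi_\epsilon^{-1})=0$) yields
$$\frac{d^+}{d\epsilon}\Big|_{\epsilon=0^+}\mathcal{J}(u_{\epsilon,t})\leq 0.$$
A direct expansion of the left-hand side using the change of variables $y=\Psi_\epsilon(x)$ on $\Psi_\epsilon(\{u>0\})$, integration by parts, and the harmonicity of $u^+$ in $\{u>0\}$ evaluates this derivative to $-\int_{\partial\{u>t\}}(|\nabla u^+|^2+x_2)\bm\phi_t\cdot\nu_t\,d\mathcal{H}^1$.

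Combining these two ingredients with $\bm\phi_t\cdot\nu_t\leq 0$, one gets
$$\int_{\partial\{u>t\}}(|\nabla u^+|^2+x_2)(-\bm\phi_t\cdot\nu_t)\,d\mathcal{H}^1\geq 0.$$
Since the nonnegative weight $-\bm\phi_t\cdot\nu_t$ can be localized near any point of $\partial\{u>t\}$, this forces $|\nabla u^+|^2+x_2\geq 0$ on $\partial\{u>t\}$ in a measure-theoretic sense. Passing $t\to 0^+$ via the coarea formula, together with the Lipschitz regularity of $u$ underlying the Bernstein-type estimates, completes the argument and gives $|\nabla u^+|^2\geq -x_2$ on $\partial\{u>0\}\cap D$. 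The main obstacle is the rigorous justification of the energy comparison on the gap in the overlapping case, where $\partial\{u>0\}$ and $\partial\{u<0\}$ coincide locally: one must track precisely the image $\Psi_\epsilon^{-1}(\text{gap})\subset\{u\leq 0\}$ and control the contribution of $u^-$ there, while the limit $t\to 0^+$ requires Sard-type smoothness of $\partial\{u>t\}$ for a.e.\ $t>0$, which is a standard consequence of the harmonicity of $u$ in $\{u\neq 0\}$.
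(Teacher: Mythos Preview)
Your approach is essentially the same as the paper's: both use the one-sided competitor $u_{\epsilon,t}(y)=u^+(\Psi_\epsilon^{-1}(y))-u^-(y)$ that moves only the positive phase inward under $\bm\phi_t\cdot\nu_t\le 0$, then compare with the full inner variation (which has vanishing first derivative by the variational identity) and integrate by parts over $\{u>0\}$ to obtain the boundary inequality. One caveat: your claim that $\mathcal{J}(u_{\epsilon,t})$ and $\mathcal{J}(u\circ\Psi_\epsilon^{-1})$ agree outside the gap is not literally correct when $\operatorname{supp}\bm\phi_t$ meets $\{u<0\}$, since the negative-phase integrands differ there at order $\epsilon$; the paper sidesteps this by writing the comparison so that the $\{u<0\}$ contribution cancels identically against the corresponding piece of $\mathcal{J}(u)$, leaving only the $\{u>0\}$ inner-variation integral, which is the cleaner bookkeeping.
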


\begin{proof}
	We prove the additional condition on $\partial\{u>0\}$, since the proof is similar on $\partial\{u<0\}$. For the sake of convenience we suppose $\partial\{u>0\}$ has the outer normal vector $\nu(x)$ at the point $x$, otherwise the following process can be obtained by the approximation of $\partial\{u>t\}$ and its outer normal $\nu_t(x)$ as $t\rightarrow0+$.
	
	For any $\bm\phi(x)\in W_0^{1,2}(D;\mathbb{R}^2)$ such that $\bm\phi(x)\cdot\nu(x)\leq0$ at $x\in\partial\{u>0\}$, let $y=x+\epsilon\bm\phi(x)$ and set $u_\epsilon(y) = u^+(x) - u^-(y)$. Then $\{u_\epsilon>0\}\subset\{u>0\}$. Hence
	\begin{equation*}
	\begin{aligned}
	0 &= \lim_{\epsilon\rightarrow0} \frac1\epsilon \Bigg[ \int_{D\cap\{u_\epsilon>0\}} \left( |\nabla u_\epsilon(y)|^2 + (-y_2) \right)dy + \int_{D\cap\{u<0\}} \left( |\nabla u|^2 + (-x_2+\lambda^2) \right)dx \\
	& \quad + \int_{D\cap\{u_\epsilon<0<u\}} \left( |\nabla u_\epsilon(y)|^2 + (-y_2+\lambda^2) \right)dy - \mathcal{J}(u) \Bigg] \\
	&\leq \int_{D\cap\{u>0\}} \left( |\nabla u|^2 div\bm\phi - 2\nabla u D\bm\phi \nabla u + div(-x_2\bm\phi) \right)dx \\
	&= \int_{D\cap\partial\{u>0\}} \left( -|\nabla u|^2 -x_2 \right)\bm\phi\cdot\nu d\mathcal{S},
	\end{aligned}
	\end{equation*}
	which leads to $|\nabla u^+|^2\geq -x_2$ on $\partial\{u>0\}$ in the weak sense since $\bm\phi\cdot\nu\leq0$.
\end{proof}

\section{Definitions of variational solution and weak solution}

In Appendix B, we give the definitions of variational solution and weak solution of (\ref{eq1.1}) for $\lambda\geq0$. We suppose that $D$ is a  domain in $\mathbb{R}^2$.

\begin{define} \label{def1} (Variational solution for $\lambda>0$)
	We define $u\in W^{1,2}(D)$ to be a variational solution of (\ref{eq1.1}), if $u\in C^0(D)\cap C^2(D\cap\{u\neq0\})$, $u\leq0$ in $B_R(x^0)\cap\{x_2\geq0\}$ where $x^0\in\Gamma_{\rm tp}$ is a stagnation point and $B_R(x^0)\Subset D$ is a small neighborhood of $x^0$, and the first variational with respect to domain variations of the functional
	$$\mathcal{J}(v)=\int_{D} \left( |\nabla v|^2 + (-x_2)\chi_{\{v>0\}} + (-x_2+\lambda^2)\chi_{\{u<0\}} \right)dx$$
	vanishes at $v=u$, i.e.
	\begin{equation*}
	\begin{aligned}
	0&=\frac{d}{d\epsilon}\Big|_{\epsilon=0} \mathcal{J}(u(x+\epsilon\bm\phi(x))) \\
	&=\int_{D} \left( |\nabla u|^2 div\bm\phi - 2\nabla u D\bm\phi \nabla u -(\phi_2+x_2 div\bm\phi)\chi_{\{u>0\}} + (-\phi_2 + (-x_2+\lambda^2)div\bm\phi)\chi_{\{u<0\}} \right)dx
	\end{aligned}
	\end{equation*}
	for any $\bm\phi=(\phi_1,\phi_2)\in W_0^{1,2}(D;\mathbb{R}^2)$.
\end{define}

\begin{define} \label{def2} (Variational solution for $\lambda=0$)
	We define $u\in W_{\rm{loc}}^{1,2}(D)$ to be a variational solution of (\ref{eq1.1}), if $u\in C^0(D)\cap C^2(D\cap\{u\neq0\})$, $u\equiv0$ in $B_R(x^0)\cap\{x_2\geq0\}$ where $x^0\in\Gamma_{\rm tp}$ is a stagnation point and $B_R(x^0)\subset D$ is a small neighborhood of $x^0$, and the first variational with respect to domain variations of the functional
	$$\mathcal{J}(v)=\int_{D} \left( |\nabla v|^2 + (-x_2)\chi_{\{v\neq0\}} \right)dx$$
	vanishes at $v=u$, i.e.
	\begin{equation*}
	\begin{aligned}
	0&=\frac{d}{d\epsilon}\Big|_{\epsilon=0} \mathcal{J}(u(x+\epsilon\bm\phi(x))) \\
	&=\int_{D} \left( |\nabla u|^2 div\bm\phi - 2\nabla u D\bm\phi \nabla u -(\phi_2+x_2 div\bm\phi)\chi_{\{u\neq0\}} \right)dx
	\end{aligned}
	\end{equation*}
	for any $\bm\phi=(\phi_1,\phi_2)\in W_0^{1,2}(D;\mathbb{R}^2)$.
\end{define}

We also use weak solutions of (\ref{eq1.1}) to help the analysis of stagnation points.

\begin{define}(Weak solution for $\lambda>0$) \label{wsol}
	We define $u\in W_{\rm{loc}}^{1,2}(D)$ to be a weak solution of (\ref{eq1.1}), if $u$ is a variational solution of (\ref{eq1.1}) when $\lambda^2>0$ and the free boundaries $\partial\{u>0\}\cap D\cap\{x_2<0\}$ and $\partial\{u<0\}\cap D$ are locally $C^{2,\alpha}$ surfaces.
\end{define}

\begin{define}(Weak solution for $\lambda=0$) \label{wsol2}
	We define $u\in W_{\rm{loc}}^{1,2}(D)$ to be a weak solution of (\ref{eq1.1}), if $u$ is a variational solution of (\ref{eq1.1}) when $\lambda^2=0$ and the free boundaries $\partial\{u>0\}\cap D\cap\{x_2<0\}$ and $\partial\{u<0\}\cap D\cap\{x_2<0\}$ are locally $C^{2,\alpha}$ surfaces.
\end{define}

\begin{rem}
	The assumption that $\partial\{u>0\}\cap D\cap\{x_2<0\}$ and $\partial\{u<0\}\cap D$ are locally a $C^{2,\alpha}$ surfaces can be verified from the free boundary regularity theory directly, for minimizers $u$ of $\mathcal{J}(u;D)$ as an example.
\end{rem}

\section{Proof of the concentration compactness}

For the completeness of the exposition, we give the proof of the concentration compactness.

\begin{proof}[Proof of Lemma \ref{cc}]
	We first prove the consequence (i).
	
	Notice that the homogeneity of $v_0$ given by (\ref{6-1}), together with the fact that $v_0$ belongs to $W^{1,2}(B_1)$, imply that $v_0$ is continuous. As
	\begin{equation} \label{a-1}
	\Delta v_k^+ = \frac{r_k^2 \Delta u^+(x^0+r_kx)}{\sqrt{ r_k^{-1}\int_{\partial B_{r_k}}(u^+)^2 d\mathcal{S} }}=0 \quad \text{for} \quad v_k^+(x)>0,
	\end{equation}
	we obtain from the sign of the singular part of $\Delta v_k^+$ with respect to Lebesgue measure that $\Delta v_k^+ \geq 0$ in $B_1$ in sense of measures. It follows that for each non-negative $\eta\in C_0^{\infty}(B_1)$ such that $\eta=1$ in $B_{(\sigma+1)/2}$ for $\sigma\in(0,1)$,
	\begin{equation} \label{a-2}
	\int_{B_{(\sigma+1)/2}} d \Delta v_k^+ =\int_{B_{(\sigma+1)/2}} \eta d\Delta v_k^+ \leq \int_{B_1} \eta d\Delta v_k^+ = \int_{B_1} v_k^+ \Delta\eta dx \leq C\int_{B_1} v_k^+ dx \leq C
	\end{equation}
	for all $k$. From (\ref{a-1}) and the fact that $v_k^+$ is bounded in $L^1(B_1)$, we obtain also that $\Delta v_0$ is a non-negative Radon measure on $B_1$. The continuity of $v_0$ implies therefore that $v_0\Delta v_0$ is well defined as a non-negative Radon measure on $B_1$.
	
	In order to apply the concentrated compactness \cite{EM94}, we modify each $v_k^+$ to
	$$\tilde{v}_k^+:=v_k^+ * \phi_k\in C^{\infty}(B_1),$$
	where $\phi_k$ is a standard mollifier such that
	$$\Delta \tilde{v}_k^+ \geq 0, \qquad \int_{B_{\sigma}}d\Delta\tilde{v}_k^+\leq C<\infty \quad \text{for all} \ \ k,$$
	and
	$$\Vert v_k^+ - \tilde{v}_k^+ \Vert_{W^{1,2}(B_{\sigma})}\rightarrow0 \quad \text{as} \quad k\rightarrow\infty.$$
	By Chapter 4, Theorem 3 in \cite{E90} we know that $\nabla\tilde{v}_k^+$ converges a.e. to the weak limit $\nabla v_0$, and the only possible problem is concentration of $|\nabla\tilde{v}_k^+|^2$. By Theorem 1.1 and Theorem 3.1 in \cite{EM94}, we obtain that
	\begin{equation} \label{a-3}
	\partial_1\tilde{v}_k^+ \partial_2\tilde{v}_k^+ \rightarrow \partial_1 v_0 \partial_2 v_0
	\end{equation}
	and
	$$(\partial_1\tilde{v}_k^+)^2 - (\partial_2\tilde{v}_k^+)^2 \rightarrow (\partial_1 v_0)^2 - (\partial_2 v_0)^2$$
	in the sense of distributions on $B_{\sigma}$ as $k\rightarrow\infty$. Let us remark that this alone would allow us to pass to the limit in the domain variation formula for $v_k^+$ in the set $\{x_2<0\}$.
	
	Observe now that (\ref{6-1}) shows that
	$$\nabla v_k^+(x)\cdot x - H_+(0+)v_k^+(x) \rightarrow 0$$
	strongly in $L^2(B_{\sigma}\backslash B_{\rho})$ as $k\rightarrow\infty$. It follows that
	$$\partial_1 v_k^+ x_1 + \partial_2 v_k^+ x_2 \rightarrow \partial_1 v_0 x_1 + \partial_2 v_0 x_2$$
	strongly in $L^2(B_{\sigma}\backslash B_{\rho})$ as $k\rightarrow\infty$. But then
	$$\int_{B_\sigma\backslash B_\rho} (\partial_1 v_k^+ \partial_1 v_k^+ x_1 + \partial_1 v_k^+ \partial_2 v_k^+ x_2)\eta dx \rightarrow \int_{B_\sigma\backslash B_\rho} (\partial_1 v_0 \partial_1 v_0 x_1 + \partial_1 v_0 \partial_2 v_0 x_2)\eta dx$$
	for each $\eta\in C_0^0(B_{\sigma}\backslash \bar{B}_{\rho})$ as $m\rightarrow\infty$. Using (\ref{a-3}) we obtain that
	$$\int_{B_\sigma\backslash B_\rho} (\partial_1 v_k^+)^2 x_1\eta dx \rightarrow \int_{B_\sigma\backslash B_\rho} (\partial_1 v_0)^2 x_1\eta dx$$
	for each $0\leq\eta\in C_0^0((B_{\sigma}\backslash \bar{B}_{\rho})\cap\{x_1>0\})$ and each $0\geq\eta\in C_0^0((B_{\sigma}\backslash \bar{B}_{\rho})\cap\{x_1<0\})$ as $k\rightarrow\infty$. Repeating the above procedure three times for rotated sequences of solutions (by $45^\circ$) yields that $\nabla v_k^+$ converges strongly in $L_{\rm{loc}}^2(B_{\sigma}\backslash \bar{B}_{\rho})$. Since $\sigma$ and $\rho$ with $0<\sigma<\rho<1$ are arbitrary, it follows that $\nabla v_k^+$ converges to $\nabla v_0$ strongly in $L_{\rm{loc}}^2(B_1\backslash\{0\})$.
	
	As a consequence of the strong convergence, we see that
	$$\int_{B_1} \nabla(\eta v_0)\cdot\nabla v_0 dx = 0 \quad \text{for all} \quad \eta\in C_0^1(B_1\backslash\{0\}).$$
	Combined with the fact that $v_0=0$ in $B_1\cap\{x_2\geq0\}$, this proves that $v_0\Delta v_0=0$ in the sense of Radon measures on $B_1$.
	
	Next we prove conclusion (ii).
	
	Let $r_k\rightarrow0+$ be an arbitrary sequence such that the sequence $v_k^+$ given by (\ref{vk}) converges weakly in $W^{1,2}(B_1)$ to a limit $v_0$. By Lemma \ref{cc} (i), $v_0\neq0$, $v_0$ is homogeneous of degree $H_{+,x^0,u}(0+)\geq\frac32$, $v_0$ is continuous, $v_0\geq0$ and $v_0\equiv0$ in $\{x_2\geq0\}$, $v_0\Delta v_0=0$ in $B_1$ as a Radon measure, and the convergence of $v_k^+$ to $v_0$ is strong in $W_{\rm{loc}}^{1,2}(B_1\backslash\{0\})$. Moreover, remember that we have $u_0^-=\lambda^2 x_2^-$, the strong convergence of $v_k^+$ and the fact that $V_+(r_k)\rightarrow0$ as $k\rightarrow\infty$ imply that
	$$0 = \int_{B_1\cap\{x_2\leq-\delta\}} (|\nabla v_0|^2 div\bm\phi - 2\nabla v_0 D\bm\phi \nabla v_0) dx$$
	for every $\bm\phi\in W_0^{1,2}(B_1\cap\{x_2\leq-\delta\};\mathbb{R}^2)$ and $0<\delta<1$ small. It follows that at each point $(\cos\theta,\sin\theta)\in\partial B_1\cap\partial\{v_0>0\}$,
	$$\lim_{\tau\rightarrow\theta+} \partial_{\theta} v_0(1,\tau) = \lim_{\tau\rightarrow\theta-} \partial_{\theta} v_0(1,\tau) \quad \text{in polar coordinates.}$$
	Computing the solution of ODE on $\partial B_1$, using the homogeneity of degree $H_{+,x^0,u}(0+)$ of $v_0$ and the fact that $\int_{\partial B_1} v_0^2 d\mathcal{S} = 1$, yields that $H_{+,x^0,u}(0+)$ must be an integer $N(x^0)\geq2$ and that
	$$v_0 = \frac{\rho^{N(x^0)}|\sin(N(x^0)\min(\max(\theta,0),\pi))|}{\sqrt{\int_0^\pi \sin^2(N(x^0)\theta)d\theta }}.$$
	The desired conclusion follows from Lemma \ref{cc} (i).
\end{proof}

\subsubsection*{Acknowledgment}
This work is supported by National Nature Science Foundation of China under Grant 12125102, and Nature Science Foundation of Guangdong Province under Grant 2024A1515012794.
\subsubsection*{Declaration of competing interest} The authors declare they have no conflicts of interests. 
\subsubsection*{Data availability} No data was used for the research described in the article.

\vskip .4in

\bibliographystyle{plain}
\bibliography{refs}

\end{document}